\documentclass[12pt]{article}%

\usepackage{graphicx,color,epsfig}
\usepackage{amsmath,amsthm,amsfonts,amssymb}
\usepackage{epstopdf}

\providecommand{\U}[1]{\protect\rule{.1in}{.1in}}

\newcommand{\N}{\mathbb N}
\newcommand{\Q}{\mathbb Q}
\newcommand{\R}{\mathbb R}
\newcommand{\Z}{\mathbb Z}
\newcommand{\eps}{\varepsilon}
\newcommand{\Fi}{\varphi}

\newcommand{\om}{\omega}
\newcommand{\omc}{\widehat{\omega}}
\newcommand{\utheta}{\underline{\theta}}
\newcommand{\ux}{\underline{x}}

\DeclareMathOperator{\card}{card}

\DeclareMathOperator{\diam}{diam}
\DeclareMathOperator{\dist}{dist}

\DeclareMathOperator{\vol}{vol}
\DeclareMathOperator{\Sing}{Sing}
\DeclareMathOperator{\SSing}{\underline{Sing}}

\newtheorem{theorem}{Theorem}

\newtheorem{corollary}[theorem]{Corollary}

\newtheorem{question}{Question}
\newtheorem{problem}[question]{Problem}

\newtheorem{lemma}[theorem]{Lemma}

\newtheorem{proposition}[theorem]{Proposition}

\newtheorem*{theorem*}{Theorem}
\newtheorem*{theoremA*}{Theorem A}
\newtheorem*{theoremB*}{Theorem B}
\newtheorem*{theoremC*}{Theorem C}

\theoremstyle{definition}
\newtheorem{definition}{Definition}

\theoremstyle{remark}

\newtheorem{remark}{Remark}[section]

\oddsidemargin 0mm
\topmargin -10mm
\textwidth 17cm
\textheight 22cm

\begin{document}

\title{Hausdorff dimension and uniform exponents in dimension two}
\author{Yann Bugeaud, 
Yitwah Cheung\thanks{The second author is partially supported by NSF Grant DMS 1600476.} and Nicolas Chevallier}
\maketitle

\AtEndDocument{\noindent
  \textit{Yann Bugeaud \\
  IRMA   U.M.R. 7501 \\ 
  Universit\'e de  Strasbourg et C.N.R.S.\\ 
  7 rue Ren\' e Descartes, 67084 Strasbourg, France} \medskip \\  
  \textit{E-mail address} \texttt{bugeaud@math.unistra.fr} \bigskip \\
  \textit{ Yitwah Cheung \\
   San Francisco State University \\ 
  1600 Holloway Ave, San Francisco, CA 94132, U.S.A.} \medskip \\
  \textit{E-mail address} \texttt{ycheung@sfsu.edu} \bigskip \\
  \textit{Nicolas Chevallier \\ Haute Alsace University \\
  4 Rue des Fr\`eres Lumi\`ere, 68093 Mulhouse Cedex, France} \medskip \\
  \textit{E-mail address}  \texttt{nicolas.chevallier@uha.fr}
}


\begin{abstract}
In this paper we prove the Hausdorff dimension of the set of (nondegenerate) 
singular two-dimensional vectors with uniform exponent $\mu\in(1/2,1)$ is $2(1-\mu)$ 
when $\mu\ge\sqrt2/2$, whereas for $\mu<\sqrt2/2$ it is greater than $2(1-\mu)$ 
and at most $(3-2\mu)(1-\mu)/(1+\mu+\mu^2)$.  
We also establish that this dimension tends to $4/3$ 
(which is the dimension of the set of singular two-dimensional vectors) when 
$\mu$ tends to $1/2$.  
These results improve upon 
previous estimates of R.~Baker, joint work of the first author with M.~Laurent, 
and unpublished work of M.~Laurent.  We also prove a lower bound on the 
packing dimension that is strictly greater than the Hausdorff dimension 
for $\mu\ge0.565\dots$.  
\end{abstract}

\noindent
\textbf{2010 Mathematics Subject Classification:} 11J13, 11K55, 37A17, 
11J13, 11K55 \\
\textbf{Keywords:} singular vectors; Dirichlet improvable sets; self-similar coverings

\section{Introduction and results}
\subsection{Overview of known results}

Let  $\utheta$ be a  (column) vector  in $\R^n$. We denote by $\vert
\utheta\vert_{\infty}$
the  maximum of the   absolute values of its coordinates  and by
$$
\Vert \utheta\Vert = \min_{\ux \in \Z^n} \vert \utheta -\ux\vert_{\infty}
$$
the maximum of the distances of its  coordinates to the  rational
integers.

Let $m, n$ be positive integers and $A$ a real $n \times m$ matrix.
Dirichlet's Theorem implies that, for any $X > 1$,
the system of inequalities
$$
\Vert A\ux \Vert \le  X^{-m/n},
\quad  0 < \vert \ux \vert_{\infty} \le X  
$$
have a solution  $\ux$ in $\Z^m$. 
This leads to the following definitions. The second one was
introduced by Davenport and Schmidt \cite{DaSc70a}.

\begin{definition} 
Let $m, n$ be positive integers and $A$ a real $n \times m$ matrix. 
The matrix $A$ is badly approximable if there exists
a positive constant $c$ such that the system of inequalities    
$$
\Vert A\ux \Vert \le c \, X^{-m/n},
\quad  \quad 0 < \vert \ux \vert_{\infty} \le X   \eqno (1.1)
$$
has no solution  $\ux$ in
$\Z^m$ for any $X \ge 1$.
\end{definition}

\begin{definition} 
Let $m, n$ be positive integers and $A$ a real $n \times m$ matrix. 
We say that Dirichlet's Theorem can be improved for the matrix $A$ if there exists
a positive constant $c < 1$ such that the system of 
inequalities (1.1) has a solution  $\ux$ in
$\Z^m$ for any sufficiently large $X$.
\end{definition}

If  the subgroup $G= A \Z^m + \Z^n$ of $\R^n$ generated
by the $m$ rows of the matrix ${}^t A$ (here and below, ${}^t M$ denotes the transpose
of a matrix $M$) together with $\Z^n$ 
has rank strictly less than $m+n$, then there exists $\ux$ in $\Z^m$ 
with $|\ux|_{\infty}$ arbitrarily large, such that $\Vert A \ux \Vert = 0$ and,
consequently, for any real number $w$ and any
sufficiently large $X > 1$, the system of inequalities
$$
\Vert A\ux \Vert \le  X^{-w},
\quad  0 < \vert \ux \vert_{\infty} \le X    
$$
has a solution  $\ux$ in $\Z^m$. 
In several of the questions considered below, we have to exclude this degenerate situation, thus
we are led to introduce the set ${\cal M}^{\ast}_{n, m} (\R)$ of $n \times m$ matrices for which the
associated subgroup $G$ has rank $m+n$.

When $m=n=1$, that is, when $A = (\xi)$ for some 
irrational real number $\xi$, it is not difficult to show that Dirichlet's Theorem can be
improved if, and only if, $\xi$ is badly approximable (or, equivalently, $\xi$
has bounded partial quotients in its
continued fraction expansion); see \cite{Kh26b} and 
\cite{DaSc70a} for a precise statement. 
Furthermore, by using the theory of continued fractions, one can prove that, for any 
irrational real number $\xi$, there are arbitrarily large integers $X$ such that
the system of inequalities
$$
\Vert x \xi || \le {1 \over 2X} \quad {\rm and} \quad 0 < x \le X    \eqno (1.2)
$$
has no integer solutions; see Proposition 2.2.4 of \cite{Bu16}.

Since the set of badly approximable numbers has Lebesgue measure zero 
and Hausdorff dimension 1, this implies that the set of
$1\times 1$ matrices $A$ for which Dirichlet's Theorem can be 
improved has Lebesgue measure zero and Hausdorff dimension 1. 
The latter assertion has been extended  as follows.

\begin{theoremA*}
For any positive integers $m, n$, the set of real $n\times m$ 
matrices for which Dirichlet's Theorem
can be improved has $mn$-dimensional Lebesgue measure zero 
and Hausdorff dimension $mn$. 
\end{theoremA*}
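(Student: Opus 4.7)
The plan is to handle the two assertions --- null measure and full Hausdorff dimension --- by separate techniques, since they require opposite estimates and draw on different ideas. The overall strategy writes the Dirichlet improvable set as the countable union $\bigcup_{k\ge 2}\mathcal{I}_{1-1/k}$, where $\mathcal{I}_c$ denotes the set of matrices $A$ for which (1.1) has a solution for every sufficiently large $X$ with the particular constant $c<1$. Both claims then reduce to statements about $\mathcal{I}_c$ for a fixed $c<1$: namely, that each $\mathcal{I}_c$ has Lebesgue measure zero, while some $\mathcal{I}_c$ nevertheless has Hausdorff dimension $mn$.

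For the null-measure statement, the case $m=n=1$ follows from (1.2) combined with continued-fraction analysis: the sequence of denominators of the convergents of almost every irrational $\xi$ produces arbitrarily large $X$ at which $X\,\|x\xi\|>c$ holds for every $0<x\le X$. For general $(m,n)$ the cleanest route is via the Dani correspondence: $c$-improvability translates into a condition on the excursions of the diagonal orbit of the lattice $u_A\,\Z^{m+n}$ in $\SL_{m+n}(\R)/\SL_{m+n}(\Z)$, where $u_A$ is the unipotent element associated to $A$. The required null-set statement then follows either from a direct volume estimate combined with the Borel--Cantelli lemma, or from nondivergence/equidistribution of the unipotent orbit as $A$ ranges over a small box of matrices.

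For the Hausdorff dimension, the plan is to exhibit a subset of $\mathcal{I}_c$ of full dimension $mn$ for some fixed $c<1$. When $m=n=1$, Davenport and Schmidt identify $\mathcal{I}_c$ (up to a null set) with the set of badly approximable $\xi$, and Jarn\'ik's classical theorem then gives Hausdorff dimension $1$. In general, one plays Schmidt's $(\alpha,\beta)$-game on the space of $n\times m$ matrices: at each stage one restricts to a sub-ball on which the improved inequality (1.1) continues to admit an integer solution at the current scale, using Minkowski's theorem on successive minima applied to the lattice $A\,\Z^m+\Z^n$ to propagate the bound from one scale to the next. The resulting winning set is a Cantor-like subset of $\mathcal{I}_c$ of Hausdorff dimension $mn$. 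The main technical obstacle is precisely this game-theoretic construction: adapting Schmidt's winning-set machinery so as to land inside $\mathcal{I}_c$ (rather than merely inside the badly approximable set) requires careful tracking of the sequence of scales $X$, especially when both $m$ and $n$ exceed $1$, whereas the null-measure statement is by comparison a soft consequence of the dynamics of the diagonal flow on $\SL_{m+n}(\R)/\SL_{m+n}(\Z)$.
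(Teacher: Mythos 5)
The paper does not give a self-contained proof of Theorem~A; it deduces it from the literature. For the null-measure assertion it cites Davenport--Schmidt \cite{DaSc70b} for $\min\{m,n\}=1$ and Kleinbock--Weiss \cite{KlWe} for the general case (the latter indeed via homogeneous dynamics, so your sketch of the measure-zero half is in line with the cited route, though your continued-fraction aside for $m=n=1$ is a detour the paper does not take). For the dimension assertion the paper uses a much shorter chain than yours: Davenport and Schmidt proved (and noted on p.~117 of \cite{DaSc70a} that the argument works for arbitrary $m,n$) that every badly approximable $n\times m$ matrix is Dirichlet-improvable; combined with Schmidt's theorem \cite{Schm69} that the badly approximable matrices have Hausdorff dimension $mn$, the lower bound $mn$ follows at once.

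Your plan for the dimension half is therefore more elaborate than necessary and, as written, identifies an obstacle that does not exist. You know the implication ``badly approximable $\Rightarrow$ improvable'' for $m=n=1$, but you do not seem to know it holds for all $m,n$ --- it does, and that is exactly the Davenport--Schmidt input. Once you have that inclusion, there is no need to re-run Schmidt's $(\alpha,\beta)$-game ``so as to land inside $\mathcal{I}_c$'': the badly approximable set is already the winning set, already has full dimension, and already sits inside the improvable set for a suitable $c$. The ``careful tracking of the sequence of scales $X$'' you flag as the main technical difficulty is thus not something the proof requires; it is an artefact of trying to build a new Cantor set rather than citing the inclusion. Your proposal is not wrong --- carried out carefully it would in effect reprove that badly approximable matrices are winning and then observe the inclusion --- but it adds work and risk precisely where the paper's route is a one-line combination of two classical results.
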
  

The first assertion of Theorem A has been established by 
Davenport and Schmidt \cite{DaSc70b} when $\min\{m, n\}= 1$.   
According to Kleinbock and Weiss \cite{KlWe}, their proof can be  
generalized to $n \times m$ matrices. Actually, a more general  
result is proved in \cite{KlWe}.    

As for the latter assertion of Theorem A, 
Davenport and Schmidt \cite{DaSc70a} showed that, for $(m, n) = (1, 2)$ or $(2, 1)$,
Dirichlet's Theorem can be improved for the $n \times m$ matrix $A$ 
if $A$ is badly approximable. They noted on page 117 that this assertion is true 
for arbitrary integers $m, n$. Combined with a result of Schmidt \cite{Schm69} 
on the size of the set of badly approximable matrices, this gives
the latter assertion of Theorem A.

We introduce the related notion of singular and regular matrices, which goes back to
Khintchine \cite{KhB}. 

\begin{definition} 
Let $m, n$ be positive integers and $A$ a real $n \times m$ matrix. 
We say that the matrix $A$ is singular if, for every positive real number $c$, the system of 
inequalities (1.1) has a solution  $\ux$ in $\Z^m$ for any sufficiently large $X$.
A matrix which is not singular is called regular.
\end{definition} 

Khintchine \cite{KhB} proved that the set of singular $n\times m$ matrices $A$ 
has  $mn$-dimensional Lebesgue measure zero; 
see also \cite{Cas}, page 92. 

A natural question is then to determine the Hausdorff dimension of the set 
of singular $n\times m$ real matrices $A$. 
The case $n=m=1$ is easy: there is no irrational real number $\xi$ such that 
the matrix $(\xi)$ is singular (recall that (1.2) has no integer solutions for arbitrarily
large values of $X$). The case $n = 2, m = 1$ was recently solved by 
Cheung \cite{cheung2}.
For an integer $n \ge 2$, we often use the terminology 
$n$-dimensional (column) vector instead of $n \times 1$ matrix. 

\begin{theoremB*}  
The Hausdorff dimension of the set of singular two-dimensional vectors
is equal to ${4 \over 3}$.
\end{theoremB*}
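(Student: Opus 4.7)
The plan is to establish the matching bounds $\dim_H \Sing_{2,1} \leq 4/3$ and $\dim_H \Sing_{2,1} \geq 4/3$ separately. Both rely on the theory of best simultaneous approximations: to each $\utheta = (\theta_1,\theta_2) \in \R^2$ one attaches a sequence $q_1 < q_2 < \cdots$ of best denominators together with integer approximants $p^{(k)} \in \Z^2$, and sets $\omc_k := \|q_k\utheta\| = \max(\|q_k\theta_1\|, \|q_k\theta_2\|)$. Dirichlet's theorem gives $\omc_k \leq q_{k+1}^{-1/2}$, and one checks that $\utheta$ is singular precisely when $\omc_k\, q_{k+1}^{1/2} \to 0$ as $k \to \infty$.

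For the lower bound I would construct a self-affine Cantor subset $K \subset \Sing_{2,1}$. The naive isotropic construction --- retaining at stage $k$ cubes of sidelength $2\omc_k/q_k$ around each rational $p^{(k)}/q_k$ and iterating with $q_{k+1} = q_k^a$ and $\omc_k = q_{k+1}^{-1/2} c_k$ where $c_k \to 0$ --- yields, via elementary mass-distribution computations, the dimension $2(a-2)/[(a-1)(a+2)]$, whose supremum over $a \geq 2$ is only $2/9$, attained at $a=4$. To reach $4/3$ one must exploit the intrinsic anisotropy of the error vectors: the two coordinate errors $\|q_k\theta_1\|$ and $\|q_k\theta_2\|$ are generically of different sizes, constrained by a Minkowski-type estimate $q_k\cdot \|q_k\theta_1\| \cdot \|q_k\theta_2\| \gtrsim 1$. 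Retaining anisotropic rectangles with a carefully calibrated aspect ratio, and applying the mass distribution principle to the resulting self-affine measure on $K$, should give $\dim_H K = 4/3$. A generic perturbation at each stage ensures that the resulting vectors lie in $\mathcal{M}^*_{2,1}(\R)$.

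For the upper bound I would invoke the Dani correspondence: $\utheta \in \Sing_{2,1}$ if and only if the trajectory $\{a_t g_\utheta \SL_3(\Z)\}_{t \geq 0}$ in $\SL_3(\R)/\SL_3(\Z)$ diverges into the cusp, where $a_t = \diag(e^t, e^t, e^{-2t})$ and $g_\utheta \in \SL_3(\R)$ denotes the upper-triangular unipotent matrix with $\theta_1, \theta_2$ in the first two rows of its third column. A divergent trajectory forces $g_\utheta \Z^3$ to contain vectors shrinking under $a_t$ faster than any prescribed exponential rate, and the set of $\utheta \in [0,1]^2$ satisfying this at resolution $T$ is covered by a family of tubes around rationals whose total Hausdorff $s$-content remains uniformly bounded as $T \to \infty$ whenever $s > 4/3$; the exponent reflects the three-dimensional lattice geometry underlying the flow.

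The main obstacle is the lower bound: calibrating the anisotropic Cantor construction demands a three-way balance between the branching number at each stage, the Minkowski compatibility $q_k\cdot\|q_k\theta_1\|\cdot\|q_k\theta_2\|\gtrsim 1$, and the singularity constraint $\omc_k\, q_{k+1}^{1/2}\to 0$, and showing that the optimum of this constrained optimization is attained at exactly $4/3$ (and not a strictly smaller value) is the technical heart of the argument. Verifying the mass distribution principle on the resulting self-affine measure, whose local scaling varies anisotropically across scales, is likewise delicate and constitutes the primary difficulty of the proof.
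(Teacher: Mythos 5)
This paper does not actually prove Theorem~B; it quotes it as a known result of Cheung \cite{cheung2}. What the paper does develop --- and what its own Theorems~1 and~2 rest on --- is the self-similar covering machinery of \cite{cheung2,cheche}, which is also the machinery that underlies the proof of Theorem~B, so I will compare your sketch against that.

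Your high-level instincts are right: the characterization of singularity via best approximations, the realization that an isotropic Cantor construction around convergents is too crude, and the idea that anisotropy governed by a Minkowski-type constraint is what pushes the dimension up to $4/3$. You also correctly flag the Dani correspondence as the dynamical picture for the upper bound. However, as you yourself acknowledge, your proposal leaves the ``technical heart'' undone, and the gap is not merely one of bookkeeping --- it is a missing structural idea. The right object to track is not the pair of coordinate errors $(\|q_k\theta_1\|, \|q_k\theta_2\|)$ but the planar lattice $\Lambda_{x_k} = \Z^2 + \Z\,\widehat{x}_k$ with its successive minima $\lambda_1(x_k), \lambda_2(x_k)$; Minkowski's theorem says $\lambda_1\lambda_2 \asymp 1/q_k$, and singularity becomes $\lambda_1(x_k)\,q_k^{1/2}\to 0$. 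The anisotropy you want to exploit is precisely the imbalance $\lambda_1 \ll \lambda_2$ of this lattice, and the correct ``children'' of a node $x$ in the fractal tree are not arbitrary nearby rationals but primitive lattice points $y$ whose projection $\pi_x(y)$ lies in a controlled layer of $\Lambda_x$ (the sets the paper calls $E_1(x)$ and $D_1(y)$ in Section~5). Without that lattice geometry it is not clear how one would calibrate the branching numbers, verify the Legendre-type nesting (Lemma~\ref{lem:BAI:2}) that guarantees each Cantor point really has the prescribed best-approximation sequence, or even formulate a mass-distribution estimate that closes at exactly $4/3$ rather than at some unspecified exponent. Similarly, for the upper bound, the Dani correspondence by itself does not produce the exponent; the $4/3$ comes from a self-similar covering count (the paper's Theorem~\ref{thm:SSCupper} applied to a carefully accelerated subsequence of best approximations), and your sketch offers no substitute for that counting. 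In short: right compass heading, but the engine --- the lattice $\Lambda_x$, its minima, and the induced tree of best approximations with its precise branching combinatorics --- is absent, and that engine is what actually produces the number $4/3$.
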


Cheung's result was very recently extended to $n$-dimensional vectors, for 
an arbitrary integer $n \ge 2$, by Cheung and Chevallier \cite{cheche}.

\begin{theoremC*} 
For every integer $n \ge 2$, the Hausdorff dimension of the set of singular 
$n$-dimensional vectors 
is equal to ${n^2 \over n+1}$.
\end{theoremC*}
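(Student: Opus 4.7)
The plan is to adapt Cheung's strategy for Theorem B to arbitrary dimension $n \ge 2$, treating the lower and upper bounds separately. Both directions will be organized around the combinatorics of best approximation vectors and an associated dynamical picture on the space of unimodular lattices $\SL_{n+1}(\R)/\SL_{n+1}(\Z)$ under the diagonal flow $a_t=\diag(e^t,\dots,e^t,e^{-nt})$, where divergent $a_t$-orbits correspond to singular vectors. The target dimension $n^2/(n+1)$ should then emerge as the balance of two natural scales.

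For the upper bound, I would first attach to each singular $\utheta\in\R^n$ its sequence of best approximation vectors $(\ux_k,q_k)\in\Z^n\times\Z_{>0}$ with approximation quality $\eps_k=\max_i|q_k\theta_i-x_k^{(i)}|$. Singularity forces $q_{k+1}\eps_k^n\to 0$. Conditional on the first $k$ best approximations, $\utheta$ is confined to a parallelepiped whose thickness in the transverse direction is controlled by $\eps_k/q_k$ and whose thickness in the remaining direction is dictated by $\eps_{k+1}/q_k$. I would then cover the singular set by such parallelepipeds over all admissible sequences $(q_0,\ldots,q_N)$, apply a Minkowski-type counting bound for the number of best approximation vectors at each scale, and optimize the resulting $s$-dimensional Hausdorff sum. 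The optimization should force the exponent $s=n^2/(n+1)$.

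For the lower bound, I would build a Cantor-like set of singular vectors inside a suitable region. At stage $k$ one has a collection of parallelepipeds $P_k$ indexed by an allowable sequence of best approximations; each parent $P_k$ has many children $P_{k+1}$ obtained by prescribing the next best approximation vector, and the family is essentially self-similar after a linear projective renormalization (the counterpart of the $a_t$-action). Choosing the ratios $q_{k+1}/q_k$ and the geometry of children so as to saturate the singularity condition $q_{k+1}\eps_k^n\to 0$, the mass distribution principle yields the desired lower bound. A key point is to verify that any $\utheta$ in the Cantor set has precisely the prescribed sequence as its true best approximation sequence, so that no accidental better approximation spoils singularity.

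The main obstacle, and the reason Theorem C is significantly harder than Theorem B, is the combinatorial and geometric control of best approximations in dimension $n\ge 3$. In dimension two the best approximations form a convex-chain-type structure similar to continued fractions; for $n\ge 3$ no such clean description exists, and one must replace it by a more flexible notion (Voronoi-type reductions, or the geometry of the successive minima under $a_t$). Establishing that the lower-bound construction produces only legitimate best approximations will require a careful transversality argument, while the upper bound will demand a sharp count of admissible successor vectors $\ux_{k+1}$ given $\ux_k$, sharp enough not to lose any exponent when passed through the sum. Both tasks appear to reduce to a packing estimate for lattice points inside a thin parallelepiped near the critical shape, which I expect to be the technical heart of the proof.
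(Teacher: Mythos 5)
Your plan correctly identifies the overall architecture that Cheung--Chevallier use: best approximation vectors as the combinatorial backbone, a covering argument for the upper bound, a Cantor set plus mass distribution for the lower bound, and the central difficulty being that best simultaneous approximations in $\R^n$ for $n\ge 2$ lack the chain structure of one-dimensional continued fractions. (Note also that Theorem~C is not proved in the present paper: it is cited from \cite{cheche}, and this paper adapts those methods to obtain refined results in dimension two.) The reformulation of singularity as $q_{k+1}\eps_k^n\to 0$ in terms of best approximation data is likewise correct.

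However, the proposal omits the machinery that actually makes the argument close, and as stated it would stall at a non-sharp exponent. The proof does not work directly with the error sequence; instead, to each primitive $x=(p,q)\in\Z^n\times\Z_{>0}$ one attaches the lattice $\Lambda_x=\Z^n+\Z\frac{p}{q}$, recasts singularity as a decay condition $\lambda_1(x_k)\ll|x_k|^{-\mu}$ on its first minimum, and uses a multidimensional Legendre-type inclusion: the set of $\theta$ having $x$ as a best approximation vector is sandwiched between the balls about $\widehat{x}=p/q$ of radii $\lambda_1(x)/(2|x|)$ and $2\lambda_1(x)/|x|$. The successive minima of $\Lambda_x$ and the codimension-one subspace $H_x$ spanned by its short vectors are what make the lattice-point counting sharp enough to land on exactly $n^2/(n+1)$; your appeal to a ``Minkowski-type count'' and a ``packing estimate near the critical shape'' is too vague to deliver this, and in particular the optimization you sketch has no mechanism to produce the exact exponent. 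More seriously, the naive covering by balls around \emph{consecutive} best approximations does not satisfy the self-similar summability condition $\sum_{y}(\diam B(y))^s\le(\diam B(x))^s$; the argument requires an \emph{acceleration}, namely passing to the subsequence of best approximations at the moments where the sequence leaves the subspace $H_x$. Without this acceleration and the strictly-nested self-similar covering framework built around it, the upper bound does not close, and the lower bound's transversality check (verifying that points of the Cantor set really have the prescribed best approximation sequence, which is handled precisely by the Legendre-type inclusion above) has no effective mechanism to rest on.
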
 

However, the following question remains unsolved.

\begin{problem} 
Let $m, n$ be integers at least equal to $2$.
What is the Hausdorff dimension of the set of singular 
$n \times m$ matrices ?
\end{problem}

Kadyrov {\it et al.} \cite{KKLM} established that 
this dimension is bounded from above by ${mn(m+n-1) \over m+n}$ and 
it is conjectured that there is in fact equality.

We can further discriminate between the singular matrices by introducing 
exponents of {\it uniform} Diophantine approximation. 
We keep the notation from \cite{BuLa05a}.


\begin{definition} 
Let $n$ and $m$ be positive integers
and let $A$ be a real $n\times m$ matrix.
We denote by  $\omc_{n, m}(A)$ the supremum of the real numbers $w$ 
for which, {\it for all sufficiently large} positive real numbers $X$, the system of inequalities
$$
\Vert A\ux \Vert \le X^{-w}, 
\quad  0 < \vert \ux \vert_{\infty} \le X   \eqno{(1.3)}
$$
has a solution  $\ux$ in $\Z^m$.
\end{definition} 

For $\omega$ in $(0 , + \infty]$,
let $\Sing_{n,m}(\omega)$ (resp. $\Sing^{\ast}_{n,m}(\omega)$) denote the set of  
matrices  $A$ in ${\cal M}_{n, m} (\R)$ (resp., in ${\cal M}^{\ast}_{n, m} (\R)$) such that
\[
\omc_{n,m}(A)\geq \omega,
\]
and $\SSing_{n,m}(\omega)$ (resp. $\SSing^{\ast}_{n,m}(\omega)$) denote the set of  
matrices  $A$ in ${\cal M}_{n, m} (\R)$ (resp., in ${\cal M}^{\ast}_{n, m} (\R)$) 
such that (1.3) holds for sufficiently large real numbers $X$.  Observe that the 
set $\SSing_{n,m}(\omega)$ is included in $\Sing_{n,m}(\omega)$ 
and depends on the choice of the norms 
on $\R^n$ and $\R^m$ whereas $\Sing_{n,m}(\omega)$ does not.

For a real $n \times m$ matrix $A$, Dirichlet's Theorem implies that
$$
\omc_{n, m}(A) \ge {m\over n}.   \eqno (1.4)
$$
Furthermore, we have equality in (1.4) for almost all
matrices $A$, with respect to the Lebesgue measure on $\R^{mn}$, as
follows from the Borel--Cantelli Lemma.
Any real matrix $A$ satisfying $\omc_{n, m}(A) > {m \over n}$ is singular, 
and there exist singular matrices $A$ with $\omc_{n, m}(A) = {m \over n}$. 

Since, for any real irrational number $\xi$, there are arbitrarily large
integers $X$ for which the system of inequalities (1.2) has no solutions, we deduce that,
for any $n \ge 1$, any real $n \times 1$ matrix $A$ satisfies $\omc_{n, 1}(A) \le 1$. 
Khintchine \cite{Kh26b} established that, for any integer $n \ge 2$, there exist matrices $A$
such that $\omc_{n, 1}(A) = 1$ and, for any integer $m \ge 2$ and any integer $n \ge 1$, 
there exist matrices $A$ such that $\omc_{n, m}(A) = + \infty$. 

The following problem complements Problem 1.
It has been considered by 
R. C. Baker \cite{Bak77,Bak92}, Yavid \cite{Ya87}, and Rynne \cite{Ry90a,Ry90b}. 

\begin{problem}
Let $m, n$ be positive integers.
Let $\om$ be in $[{m \over n}, + \infty]$ with $\om \le 1$ if $m = 1$.
What is the Hausdorff dimension of the set of $n \times m$ matrices $A$ 
in ${\cal M}^{\ast}_{n, m} (\R)$ satisfying $\omc_{n, m} (A) \ge \om$ 
(resp. $\omc_{n, m} (A) = \om$)? 
\end{problem}

Before stating our new results, which deal with the case $(n, m) = (2, 1)$, 
we summarize what is known towards the resolution of Problem 2. 

We first point out a result of Jarn\'\i k \cite{Jar38} asserting that
any real $1\times 2$ matrix $A$ in ${\cal M}^{\ast}_{1, 2} (\R)$ satisfies 
$$
\omc_{2, 1}({}^t A) = 1 - {1 \over \omc_{1, 2} (A)}.  \eqno (1.5)
$$
Thus, the cases $(n, m) = (1, 2)$ and $(n, m) = (2, 1)$ are equivalent. 



Let $\tau > 2$ be a real number. 
Baker \cite{Bak77,Bak92} proved that
$$
{2 \over \tau} \le
\dim_H \Sing^{\ast}_{1,2}(\tau) \le
{6 \over \tau + 1},   \eqno (1.6) 
$$
thus
$$
\dim_H  \Sing^{\ast}_{1,2}(+ \infty) = 0. 
$$
Bugeaud and Laurent \cite{BuLa05b} observed that a direct combination of 
(1.5) with a result of Dodson \cite{Do92} yields the 
slightly sharper upper bound
$$
\dim_H \Sing^{\ast}_{1,2}(\tau) \le
{3 \tau \over \tau^2 - \tau + 1},     \eqno (1.7)
$$
which was improved to $(2 \tau + 2) / (\tau^2 - \tau + 1)$ by Laurent 
in an unpublished manuscript.
We deduce from (1.5) that (1.6) and (1.7) 
give, for $\mu \ge 1/2$, 
$$
2 (1 - \mu) \le \dim_H \Sing^{\ast}_{2,1}(\mu) \le
{3 (1 - \mu) \over \mu^2 -  \mu + 1}.      \eqno (1.8)
$$
Observe that for $\mu = 1/2$ the right hand-side of (1.8) 
is equal to $2$, while Theorem B implies that
$\lim_{\mu\rightarrow\frac12}\dim \Sing^{\ast}_{2,1}(\mu) \le 4/3$. 
This shows that the right hand inequality in (1.8) is certainly not best possible
for $\mu > (\sqrt{105} - 5)/8 = 0.655 \ldots$

For $m \ge 3$, combining results of Baker \cite{Bak92} and Rynne \cite{Ry90b}, one gets that
$$
m - 2 + {m \over \tau} \le
\dim_H \Sing^{\ast}_{1,m}(\tau) \le
m - 2 + 2 {m + 1 \over \tau + 1}
$$
holds for any real number $\tau >m$, thus
$$
\dim_H \Sing^{\ast}_{1,m}(+ \infty) = m-2,
$$
for $m \ge 2$. 

As far as we are aware, there is no contribution towards Problem 2
when $\min\{m, n\} \ge 2$. 


\subsection{New results}

The purpose of the present paper is to address Problem 2
for the pair $(n, m) = (2, 1)$. 
Our first result improves the right hand inequality in (1.8) for every 
value of $\mu$ in $(1/2, 1)$. 

\begin{theorem}
	\label{thm:upper}
For any real number $\mu$ in $(1/2, \sqrt{2}/2]$, we have
$$
\dim_H \Sing^{\ast}_{2,1}(\mu) \le
{(3 - 2 \mu) \, (1 - \mu) \over \mu^2 - \mu + 1}.  
$$
For any real number $\mu$ in $[\sqrt{2}/2, 1)$, we have
$$
\dim_H \Sing^{\ast}_{2,1}(\mu) \le 2 (1 - \mu).
$$
\end{theorem}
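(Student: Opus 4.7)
The strategy is to build an economical cover of $\Sing^{\ast}_{2,1}(\mu)$ from the nested boxes supplied by best simultaneous approximations. First I would reduce to bounding $\dim_H\SSing^{\ast}_{2,1}(w)$ for $w<\mu$, letting $w\to\mu$ at the end. For $\utheta\in\SSing^{\ast}_{2,1}(w)$, let $x_1<x_2<\cdots$ be the denominators of its best simultaneous approximations, with associated $\mathbf{p}_k=(p_1^{(k)},p_2^{(k)})\in\Z^2$ and errors $\psi_k=\max_i|x_k\theta_i-p_i^{(k)}|$. The uniform condition together with the minimality of $\psi_k$ on $[x_k,x_{k+1})$ yields the central estimate $\psi_k\le x_{k+1}^{-w}$, and $\utheta$ lies in the box $B_k$ of side $2\psi_k/x_k$ centered at $\mathbf{p}_k/x_k$.

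Next, cover $\Sing^{\ast}_{2,1}(\mu)$ by the union, over admissible chains $(x_k,\mathbf{p}_k)_{k\ge 1}$, of the boxes $B_k$ at a common depth. Group the chains by the geometric ratio $x_{k+1}/x_k\approx Q$ and count admissible continuations of a given prefix. The compatibility requirement $B_k\cap B_{k+1}\ne\varnothing$ forces $\mathbf{p}_{k+1}$ into an integer box of side $\sim x_{k+1}(\psi_k/x_k+\psi_{k+1}/x_{k+1})$ around $(x_{k+1}/x_k)\mathbf{p}_k$. The essential companion input is a geometric lower bound: inspecting the wedge product of the lifts $v_k=(x_k,x_k\theta_1-p_1^{(k)},x_k\theta_2-p_2^{(k)})$ and $v_{k+1}$ in $\R^3$, two of its components are integers bounded in absolute value by $\psi_k x_{k+1}+\psi_{k+1}x_k$, and the strict decrease $\psi_{k+1}<\psi_k$ forbids both from vanishing (otherwise $\mathbf{p}_{k+1}/x_{k+1}=\mathbf{p}_k/x_k$, which would force $\psi_{k+1}>\psi_k$); hence $\psi_k x_{k+1}+\psi_{k+1}x_k\ge 1$.

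Sandwiching the product $\psi_k x_{k+1}$ between a positive constant and $x_{k+1}^{1-w}$ controls the branching factor of the tree of chains; summing the $s$-dimensional mass over chains and optimizing in $Q$ then produces the two exponents in the statement. For $\mu\in[\sqrt{2}/2,1)$ the optimum makes $\psi_k x_{k+1}$ close to $1$ and the one-level counting delivers $\dim_H\le 2(1-\mu)$, which together with the lower bound in (1.8) pins the dimension exactly. For $\mu\in(1/2,\sqrt{2}/2]$ the branching is too large for the one-level cover to be tight; one must track two successive levels jointly and optimize against both the upper bound $\psi_k\le x_{k+1}^{-w}$ and the integer lower bound $\psi_k x_{k+1}+\psi_{k+1}x_k\ge 1$, and it is this quadratic optimization that produces the denominator $\mu^2-\mu+1$ and the numerator $(3-2\mu)(1-\mu)$.

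The main difficulty lies in the regime $\mu<\sqrt{2}/2$: one must arrange the two-level counting so as not to overcount and verify that the $Q$-optimum lands in the predicted parameter range, the transition value $\mu=\sqrt{2}/2$ recording precisely where the one-level and two-level optima coincide.
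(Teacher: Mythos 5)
Your proposal captures the correct general setting (best simultaneous approximations, a tree of Legendre-type boxes, counting admissible continuations) but it is missing the key mechanism that drives the paper's proof, and one of its central claims is false.

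The most serious gap is the absence of the \emph{acceleration}. The paper does \emph{not} define the transition $\sigma(x)$ by a single step of the best-approximation sequence. Instead, for each $x$ it fixes the codimension-one subspace $H_x$ spanned by a minimal sublattice of $\Lambda_x=\Z^2+\Z\widehat{x}$, and defines $\sigma_\mu(x)=\bigcup_{y\in E(x)}D(y)$, where $E(x)$ records the first best approximation $y$ that \emph{leaves} $H_x$ and $D(y)$ the (possibly long) run of subsequent best approximations that stay inside $H_y$. This grouping into macro-steps is exactly what \cite{cheung2} and \cite{cheche} call the acceleration, and the paper explicitly remarks that without it the sum inequality $\sum_{y\in\sigma(x)}\diam B(y)^s\le\diam B(x)^s$ of Theorem~\ref{thm:SSCupper} does not hold. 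Related to this, the paper does \emph{not} use the Legendre radius $2\lambda_1(x)/|x|$ but rather the larger radius $c\,(\lambda_2(x)^{\mu}|x|)^{-1/(1-\mu)}$ (and the $\gamma$-interpolated version of Corollary~\ref{cor:upper-self}), with $\lambda_2(x)$ the second minimum of $\Lambda_x$. Your boxes $B_k$ of side $2\psi_k/x_k$ are of Legendre type; they play no role in the paper's argument, and in fact they depend on $\theta$ through $\psi_k$, which is awkward for a covering argument.

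Concretely, your claim that for $\mu\ge\sqrt2/2$ "the one-level counting delivers $\dim_H\le 2(1-\mu)$" cannot be right. The one-step tree counting with Legendre boxes and the lower bound $\psi_k x_{k+1}+\psi_{k+1}x_k\ge 1$ is precisely the mechanism behind Baker's bound and its refinement by Dodson/Laurent, which after the transfer (1.5) give respectively $6(1-\mu)/(2-\mu)$, $3(1-\mu)/(\mu^2-\mu+1)$, and $2(2-\mu)(1-\mu)/(\mu^2-\mu+1)$. At $\mu=\sqrt2/2$ these evaluate to about $1.36$, $1.11$ and $0.95$, all strictly larger than $2(1-\mu)\approx 0.586$. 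Beating these bounds is the whole point of the theorem, and it requires the $H_x$-acceleration plus the $\lambda_2$-based box radius and Minkowski's theorem (as in Lemma~\ref{lem:sum-upper}). Your wedge-product observation $\psi_k x_{k+1}+\psi_{k+1}x_k\ge 1$ is correct and classical (two of the components of $v_k\wedge v_{k+1}$ are integers not both zero, being the components of $\widehat{x}_k|x_k\wedge x_{k+1}|$ up to sign), but by itself it is only a Legendre-type lower bound and cannot isolate the exponent $2(1-\mu)$. Finally, the dichotomy in the theorem does not come from contrasting one versus two levels of the tree: in the paper both regimes use the same accelerated $\sigma_\mu$, and the split at $\mu=1/\sqrt2$ arises from optimizing the interpolation parameter $\gamma$ in the radius $B_{\mu,\gamma}(x)=B\bigl(\widehat{x},c(\lambda_2(x)^{(1-\gamma)\mu}|x|^{(\mu-1)\mu\gamma+1})^{-1/(1-\mu)}\bigr)$; the value $\gamma=\frac{1-2\mu^2}{\mu(1-\mu)(3-2\mu)}$ becomes negative precisely at $\mu=1/\sqrt2$, which is where the formula switches.
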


Observe that our upper bound for $\dim_H \Sing^{\ast}_{2,1}(\mu)$ 
is a continuous function of $\mu$ in $(1/2, 1)$.

Combined with (1.8), Theorem \ref{thm:upper} yields the exact value of the dimension 
when $\mu$ is sufficiently large. 

\begin{corollary}
	\label{cor:baker}
For any real number $\mu$ in $[\sqrt{2}/2, 1)$, we have
$$
\dim_H \Sing^{\ast}_{\,2,1}(\mu) = 2 (1 - \mu).
$$
\end{corollary}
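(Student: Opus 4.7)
The plan is essentially a one-line deduction: combine the known lower bound from (1.8) with the upper bound proved in Theorem~\ref{thm:upper} in the regime where the two coincide.

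First, I would recall that the left-hand inequality in (1.8), attributed to Baker~\cite{Bak77,Bak92} (transported from the $(1,2)$ setting via Jarn\'ik's identity (1.5)), provides the lower bound
\[
\dim_H \Sing^{\ast}_{2,1}(\mu) \ge 2(1-\mu)
\]
for every $\mu \ge 1/2$, and in particular for $\mu \in [\sqrt{2}/2, 1)$.

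Second, the second half of Theorem~\ref{thm:upper} furnishes the matching upper bound
\[
\dim_H \Sing^{\ast}_{2,1}(\mu) \le 2(1-\mu)
\]
in exactly the same range $\mu \in [\sqrt{2}/2, 1)$. Sandwiching the two inequalities yields the claimed equality. There is no real obstacle here; the only (minor) point worth flagging is to verify that the value $\mu = \sqrt{2}/2$ is indeed covered — which it is, since both pieces include this endpoint, and the first clause of Theorem~\ref{thm:upper} gives the same value $2(1-\sqrt{2}/2)$ there, confirming continuity of the upper bound at the transition point. All the real work is packed into Theorem~\ref{thm:upper}; the corollary itself is a direct consequence.
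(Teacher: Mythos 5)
Your proposal is correct and coincides with the paper's own deduction: the paper states immediately before the corollary that it follows by combining the lower bound in (1.8) (Baker's result, transported via Jarník's identity) with the upper bound of Theorem~\ref{thm:upper}. The paper also re-derives the lower bound $\ge 2(1-\mu)$ independently from Theorem~\ref{thm:lower-general} by letting $b\to\infty$, but that is an alternative route to the same inequality and does not change the argument.
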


Our second result improves the left hand inequality in (1.8) for every 
value of $\mu$ in $(\sqrt{2}/2, 1)$. 

\begin{theorem}
\label{thm:lower-general}
For any real number $\mu$ in $(1/2, \sqrt{2}/2)$, we have
$$
\dim_H \SSing^{\ast}_{\,2,1}(\mu) \ge
(1 - \mu) \sup_{b>0}\frac{\left(
2b^{2}+2b\mu+b+(2-\mu)(2\mu-1\right)  )}{\left(  b+2\mu-1\right)  \left(\mu^{2}-\mu+b+1\right)  },
$$
and thus
$$
\dim_H \SSing^{\ast}_{\,2,1}(\mu) \geq 2 (1 - \mu). 
$$
\end{theorem}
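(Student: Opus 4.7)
The plan is to construct, for each $\mu\in(1/2,\sqrt2/2)$ and each choice of parameter $b>0$, a self-similar Cantor-like set $\mathcal K=\mathcal K_{\mu,b}\subset\SSing^{\ast}_{2,1}(\mu)$ whose Hausdorff dimension matches the corresponding term in the supremum, and then to optimize over $b$. The second inequality $\dim_H\SSing^{\ast}_{2,1}(\mu)\ge 2(1-\mu)$ then follows from the observation that the supremand tends to $2$ as $b\to\infty$ (numerator $2b^{2}+O(b)$, denominator $b^{2}+O(b)$).

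The construction proceeds by prescribing a chain of best approximations, in the spirit of Cheung's proof of Theorem B and its extension by Cheung--Chevallier. First one encodes the uniformity $\omc_{2,1}(\utheta)\ge\mu$ as the quantitative condition
\[
|x_k\utheta-\mathbf p_k|_\infty\le x_{k+1}^{-\mu}
\]
along a sequence $(x_k,\mathbf p_k)\in\Z\times\Z^2$ of best approximations with $x_{k+1}\asymp x_k^{1/\mu}$. Given the data through stage $k$, the set of admissible $\utheta$ is a \emph{tube} $T_k$---a parallelogram in $\R^2$ with side lengths $\asymp x_k^{-1-\mu}$ and $\asymp x_k^{-1}$---and it is this tube that is refined at the next stage by committing to one of many possible continuations $(x_{k+1},\mathbf p_{k+1})$.

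The parameter $b$ controls the shape of the admissible continuations. Rather than follow the most straightforward self-similar scheme, one enlarges the admissible window for $\mathbf p_{k+1}$ in one direction of $T_k$ by a factor controlled by $b$, producing an admissible three-dimensional region for the triple $(x_{k+1},\mathbf p_{k+1})$ whose dimensions depend on $b$. A geometry-of-numbers count of primitive lattice triples in this region gives a child count $N_k$, while the resulting child tubes contract in a way that trades branching against contraction. Tracking the exponents and applying the mass distribution principle to the natural uniform measure on $\mathcal K$ recovers the ratio
\[
(1-\mu)\cdot\frac{2b^{2}+2b\mu+b+(2-\mu)(2\mu-1)}{(b+2\mu-1)(\mu^{2}-\mu+b+1)}
\]
as a lower bound on $\dim_H\mathcal K$, and optimizing over $b>0$ yields the theorem.

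The main obstacle is the combinatorial--geometric heart of the argument: one must produce, at each stage, an \emph{actual} disjoint family of child tubes, not merely an upper bound on the number of continuations. This requires a Minkowski-type argument guaranteeing that every candidate triple is genuinely a best approximation (and that the associated child tube is nonempty), together with a separation estimate showing that the child tubes are pairwise disjoint. The constraint $\mu<\sqrt2/2$ is what provides the geometric slack needed to accommodate the enlarged window while preserving separation; for $\mu\ge\sqrt2/2$ the tubes would overlap and the improved branching disappears, which is consistent with the fact that Theorem~\ref{thm:upper} already pins the dimension at $2(1-\mu)$ in that range. A secondary and more routine issue is verifying nondegeneracy (membership in ${\cal M}^{\ast}_{2,1}(\R)$): because the construction varies continuations in two independent directions at every stage, every limit $\utheta$ avoids all proper rational subspaces, so $\mathcal K\subset\SSing^{\ast}_{2,1}(\mu)$ as required.
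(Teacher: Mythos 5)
Your overall strategy is the right one — construct a nested Cantor family of best-approximation data, count children against contraction, and apply a mass-distribution argument — and your identification of the roles of $b$ (branching parameter) and of the disjointness/realization issues is reasonable. But there is a genuine gap exactly at the step you describe as "applying the mass distribution principle to the natural uniform measure on $\mathcal K$." That step does not go through in the naive form. The children at each stage are extremely anisotropically placed: the set $\sigma(x)$ consists of points $\widehat z$ lying on short line segments (the sets $D_1(y)$) anchored at points $\widehat y$ which themselves form a distorted $H\times V$ tiling of $B(x)$ with $H\gg V$. For a uniform measure on such a configuration the Frostman bound $\nu(B(a,r))\ll r^s$ fails for intermediate scales $r$: a small ball aligned with one of those segments can pick up far too much mass. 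This is precisely what forced the paper to replace the usual self-similar lower-bound criterion by the more flexible Theorem~\ref{thm:massdistribution} (whose condition (iii) is a uniform bound on $\sum_{y\in\sigma_F(x)}(\diam B(y))^s/(\diam F)^s$ over \emph{all} intermediate sets $F$), and to prove the multi-scale counting Lemma~\ref{lem:counting} together with the distorted-tiling Lemma~\ref{lem:distortedtiling}. Without some substitute for this multi-scale analysis, the exponent bookkeeping alone does not yield a Hausdorff-dimension lower bound, only a box-dimension heuristic.

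A second, smaller error: you attribute the restriction $\mu<\sqrt2/2$ to a geometric breakdown of the construction ("for $\mu\ge\sqrt2/2$ the tubes would overlap"). In the paper the construction and the lower bound $s_1(b)$ are valid for every $\mu\in(1/2,1)$ and every $b>0$; the threshold $\mu=\sqrt2/2$ is purely where the supremum in $b$ moves from a finite critical point $b_0$ (giving strictly more than $2(1-\mu)$) to $b=\infty$ (giving exactly $2(1-\mu)$). The hypothesis $\mu<\sqrt2/2$ in the statement is there only because that is where the bound improves on the already-known $2(1-\mu)$, not because the Cantor construction degenerates above it.

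Finally, a structural point worth flagging even if it does not by itself invalidate your plan: the paper's children map $\sigma$ is a \emph{composite} $x\mapsto E_1(x)\mapsto D_1(y)$, i.e.\ each step of the Cantor tree skips through an intermediate best approximation $y$ (and $b$ parametrizes the length $|z|/|y|\asymp|y|^b$ of the second leg). Your description sounds more like a single-step construction with an enlarged window. You would need to either reproduce this acceleration or verify that a single-step scheme achieves the same exponents $n_x$, $r_0$, $r_3$; this is not obviously automatic.
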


A combination of Theorems \ref{thm:upper} and \ref{thm:lower-general} 
yields the following corollary.

\begin{corollary}
\label{cor:4/3}
We have
$$
\lim_{\mu \to 1/2,\,\mu>\frac{1}{2}} \,\dim_H \SSing^{\ast}_{\,2,1}(\mu)= {4 \over 3}.
$$
\end{corollary}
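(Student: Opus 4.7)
The plan is to sandwich $\dim_H \SSing^{\ast}_{\,2,1}(\mu)$ between the upper bound from Theorem~\ref{thm:upper} and the lower bound from Theorem~\ref{thm:lower-general}, and check that both bounds converge to $4/3$ as $\mu \to 1/2^+$.

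For the upper bound, I first note the inclusion $\SSing^{\ast}_{\,2,1}(\mu) \subset \Sing^{\ast}_{\,2,1}(\mu)$ (indicated right after Definition~6), so that the first half of Theorem~\ref{thm:upper} gives, for $\mu \in (1/2, \sqrt{2}/2]$,
$$
\dim_H \SSing^{\ast}_{\,2,1}(\mu) \le \frac{(3-2\mu)(1-\mu)}{\mu^2 - \mu + 1}.
$$
The right-hand side is a continuous function of $\mu$ equal to $(2)(1/2)/(3/4) = 4/3$ at $\mu = 1/2$, hence $\limsup_{\mu \to 1/2^+} \dim_H \SSing^{\ast}_{\,2,1}(\mu) \le 4/3$.

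For the matching lower bound, I need to extract a sharp value from
$$
(1-\mu) \sup_{b > 0} \frac{2b^{2}+2b\mu+b+(2-\mu)(2\mu-1)}{(b + 2\mu - 1)(\mu^{2}-\mu+b+1)}
$$
provided by Theorem~\ref{thm:lower-general}. A bounded choice of $b$ independent of $\mu$ only produces $(1-\mu)\cdot 2 \to 1$, and a choice $b$ of order $2\mu-1$ similarly falls short. The correct scaling is to take $b = b(\mu)$ tending to $0$ with $b(\mu)/(2\mu-1) \to \infty$: setting $\eps := 2\mu - 1$ and $b(\mu) := \sqrt{\eps}$, a routine Taylor expansion shows the numerator equals $2\sqrt{\eps} + O(\eps)$ and the denominator equals $\tfrac{3}{4}\sqrt{\eps} + O(\eps)$, so the quotient tends to $8/3$. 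Multiplying by $1-\mu \to 1/2$ gives $\liminf_{\mu \to 1/2^+} \dim_H \SSing^{\ast}_{\,2,1}(\mu) \ge 4/3$, and the sandwich yields the stated limit.

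I do not anticipate any serious obstacle: once the two theorems are in hand, the argument reduces to an elementary limit of a rational function. The only mildly delicate point is identifying the correct asymptotic scale of $b$ in the supremum, since both faster and slower decay rates produce a strictly smaller constant; any choice $b(\mu)$ with $b(\mu) \to 0$ and $b(\mu)(2\mu-1)^{-1} \to \infty$ works equally well.
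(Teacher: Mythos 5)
Your proposal is correct and follows essentially the same route as the paper: pin the upper bound down via the continuous expression $(3-2\mu)(1-\mu)/(\mu^2-\mu+1)$ from Theorem~\ref{thm:upper} (which evaluates to $4/3$ at $\mu=1/2$, using $\SSing^* \subset \Sing^*$), and extract the matching lower bound from Theorem~\ref{thm:lower-general} by letting $b\to 0$ at the right rate relative to $2\mu-1$. The paper's computation substitutes $b=\beta(2\mu-1)$, sends $\mu\to 1/2$ for fixed $\beta$ to get the bound $\tfrac{4}{3}\cdot\tfrac{4\beta+3}{4\beta+4}$, and then lets $\beta\to\infty$, whereas you choose a single parametric path $b=\sqrt{2\mu-1}$ and take one limit — the two choices capture exactly the same asymptotic regime $b\to 0$, $b/(2\mu-1)\to\infty$, as you correctly observe, so there is no substantive difference in method.
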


By Theorem B, the set of singular two-dimensional vectors 
has dimension ${4 \over 3}$. Corollary \ref{cor:4/3}
shows that there is no jump of Hausdorff dimension.

\begin{remark}
For a fixed $\mu$ in $(\frac{1}{2},\frac{1}{\sqrt{2}})$, it is
not difficult to compute the positive real number $b_0$ giving the maximum of the
rational fraction%
\[
b \longmapsto \frac{\left(  2b^{2}+2b\mu+b+(2-\mu)(2\mu-1\right)  )}{\left(  b+2\mu
-1\right)  \left(  \mu^{2}-\mu+b+1\right)  }.
\]
It satisfies a quadratic equation. Unfortunately, the lower
bound we obtain does not match 
with the upper bound established in Theorem  \ref{thm:upper}. 
\end{remark}

\begin{remark}
For real numbers $\mu, \tau \ge 1/2$,
denote by $\SSing_{\,2,1}(\mu,\tau)$ the set of matrices $A$ 
in $\SSing_{\,2,1}(\mu)$ such that there are arbitrarily large real numbers $X$
for which the system of inequalities
$$
\Vert A\ux \Vert \le X^{-\tau}, 
\quad  0 < \vert \ux \vert_{\infty} \le X  
$$
has a solution $\ux$ in $\Z$.

The proof of Theorem \ref{thm:lower-general} 
enables us to state a more precise result, namely 
\[
\dim_{H}\SSing^{\ast}_{\,2,1}(\mu,\tau)\geq(1-\mu)\frac{\left(
2b^{2}+2b\mu+b+(2-\mu)(2\mu-1\right)  )}{\left(  b+2\mu-1\right)  \left(
\mu^{2}-\mu+b+1\right),  }%
\]
where $\tau=\frac{1}{\left(  1-\mu\right)  \left(  b+1\right)  }\left(
\mu^{2}-\mu+b+1\right)  -1$ and $b$ is any positive real number (this is 
a consequence of Lemma \ref{lem:nestedness}).
\end{remark}

\begin{remark}
It is very likely that
$$
\dim_H \{A \in {\cal M}^{\ast}_{2, 1} (\R) : \omc_{2, 1} (A) = \mu\} = 2 (1 - \mu)
$$
for every $\mu$ in $[\sqrt{2}/2, 1)$. However, this does not follow from 
our results and it seems to us that a proof would require additional ideas. 
\end{remark}

Finally, we also prove a result about the packing dimension.

\begin{proposition}
\label{prop:packing}
For every real number $\mu$ in $(\frac{1}{2},1),$ we have%
\[
\dim_P\SSing^{\ast}_{\,2,1}(\mu)
\geq \sup_{b>0}\frac{\left(  2b^{2}+2b\mu+b+(2-\mu)(2\mu-1\right)
	)}{(\mu+1+2b)\left(  b+2\mu-1\right), }
\]	
thus, in particular,
	\[
	\dim_{P}\SSing^{\ast}_{\,2,1}(\mu) > 1.
	\]	
\end{proposition}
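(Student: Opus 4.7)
The plan is to reuse the Cantor-type construction from the proof of Theorem \ref{thm:lower-general}, equip it with its natural mass distribution, and evaluate the upper pointwise dimension of that measure at the coarser of the two scales present in the construction.

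The key tool is the packing-dimension analogue of the mass-distribution principle: if $\nu$ is a Borel probability measure on a compact set $F \subseteq \R^{2}$ with
$$\overline{d}(\nu,x) := \limsup_{r \to 0}\frac{\log \nu(B(x,r))}{\log r} \ge \alpha$$
for $\nu$-almost every $x \in F$, then $\dim_{P} F \ge \alpha$. In contrast to its Hausdorff counterpart, which uses a liminf and so is dictated by the finest scale in any multiscale covering, the packing version allows one to restrict attention to a single sequence of scales $s_{n} \to 0$ along which balls happen to carry unusually little mass.

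For each $b > 0$, I would recall from the proof of Theorem \ref{thm:lower-general} the nested sequence of coverings $\{\mathcal{F}_{n}\}$ defining a Cantor set $E_{b} \subseteq \SSing^{\ast}_{\,2,1}(\mu)$, together with the natural probability measure $\nu$ on $E_{b}$. That construction involves two scales per level: a fine \emph{descendant} scale $r_{n}$ bounding the diameter of a single level-$n$ cell (this is the scale used to estimate $\dim_{H}$) and a coarser \emph{parent} scale $s_{n}$ representing the geometric window imposed by the level-$(n{-}1)$ arithmetic constraint alone, inside which the $\mathcal{F}_{n}$-children of a common parent are packed. The exponential rate of decay of $r_{n}$ is precisely the Hausdorff denominator $(b+2\mu-1)(\mu^{2}-\mu+b+1)/(1-\mu)$, while the rate of $s_{n}$ is the smaller quantity $(\mu+1+2b)(b+2\mu-1)$; an elementary algebraic manipulation confirms the inequality between these two, consistent with $\dim_{H} \le \dim_{P}$. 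Using the separation of distinct level-$(n{-}1)$ parent regions at scale comparable to $s_{n}$, any ball of radius $s_{n}$ meets only $O(1)$ parent regions, so $\nu(B(x,s_{n}))$ is bounded by a constant times the mass of a single level-$(n{-}1)$ cell. Substituting $r = s_{n}$ into the definition of $\overline{d}$ and using the known rate of $s_{n}$ yields the claimed lower bound; taking the supremum over $b > 0$ completes the proof. The strict inequality $\dim_{P} \SSing^{\ast}_{\,2,1}(\mu) > 1$ follows by choosing any $b$ for which the right-hand side exceeds $1$ (for instance, $b$ close to $0$ when $\mu$ is near $1/2$).

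The principal obstacle is the quantitative separation of level-$(n{-}1)$ parent regions at scale $\asymp s_{n}$. Morally this is built into the construction (distinct parents correspond to distinct best approximations), but making it rigorous requires returning to the arithmetic definitions from the proof of Theorem \ref{thm:lower-general} and ruling out an impossible near-coincidence among best approximants of the underlying vector. Once that separation is in place, the remaining steps—the bound on $\nu(B(x,s_{n}))$ and the application of the packing mass-distribution principle—are routine.
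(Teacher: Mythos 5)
Your approach is correct in principle and does yield the stated bound, but it routes through a different tool than the paper. You propose to reuse the Cantor measure $\nu$ from the Hausdorff proof and apply the packing-dimension mass-distribution principle (the upper-local-dimension characterisation $\overline{d}(\nu,x)\ge\alpha$ for $\nu$-a.e.\ $x$ implies $\dim_P\ge\alpha$). The paper instead proves a bespoke packing lemma (Lemma \ref{lem:self:packing}) which inflates each ball $B(x)$ to a larger ball $B'(x)=B(\widehat x,c_4|x|^{-\frac{\mu+1+2b}{1+b}})$, checks that the $B'(z)$, $z\in\sigma(x)$, are pairwise disjoint and nested in $B'(x)$, and lower-bounds the packing pre-measure directly by counting disjoint balls at each level. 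Both routes reduce to the same ratio: the numerator exponent $n_x$ (number of children) divided by the gap exponent $\frac{\mu+1+2b}{1+b}\cdot\frac{2\mu+b-1}{1-\mu}$ rather than the Hausdorff exponent $|r_0|\cdot\frac{2\mu+b-1}{1-\mu}$. Your identification of the two relevant scales and their exponents is correct, your recognition that the separation of sibling cells at the coarse scale is the crux is also correct, and the needed separation is exactly what Lemma \ref{lem:d(z,z')} provides: the $\widehat z$ corresponding to distinct siblings are at distance $\gtrsim \frac{\lambda_1(y)}{|y|^{1+2b}}\asymp|z|^{-\frac{\mu+1+2b}{1+b}}$. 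One presentational caution: your level indexing (``level-$(n-1)$ parent regions'' at scale $s_n$) needs care to ensure the mass bound you invoke is the mass of the cell whose \emph{separation} scale you chose, not a coarser one; with the scale $s_n\asymp|x_{n-1}|^{-\frac{\mu+1+2b}{1+b}}$ and the cell mass $\asymp(\diam B(x_{n-1}))^{s_1}$ the numbers come out right, but if $s_n$ were taken as the separation of level-$n$ siblings instead, one would instead need the level-$n$ cell mass.

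The parenthetical justification you give for the strict inequality $\dim_P>1$ is wrong. Writing $g(b)$ for the expression in the supremum, one has $\lim_{b\to0^+}g(b)=\frac{(2-\mu)(2\mu-1)}{(\mu+1)(2\mu-1)}=\frac{2-\mu}{\mu+1}$, which is strictly less than $1$ for every $\mu\in(\frac12,1)$; taking $b$ near $0$ therefore gives a bound below $1$, not above it. A short computation gives
\[
g(b)-1=\frac{b(2-3\mu)-(2\mu-1)^2}{(\mu+1+2b)(b+2\mu-1)},
\]
so $g(b)>1$ occurs only when $\mu<\frac23$ and $b>\frac{(2\mu-1)^2}{2-3\mu}$, while for $\mu\ge\frac23$ one has $g(b)<1$ for all $b>0$ with $g(b)\to1$ as $b\to\infty$, so the supremum equals $1$ and is not attained. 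Thus the stated formula by itself only gives $\dim_P\ge1$ when $\mu\ge\frac23$. (The paper's own proof also only deduces $\ge1$ by letting $b\to\infty$, so this imprecision is not specific to your write-up, but your suggested choice of $b$ is in the wrong regime.)
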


\begin{remark} Using Theorem \ref{thm:lower-general} 
and Proposition \ref{prop:packing} and some numerical experiments it is easy to see that
\[
\dim_P \SSing^{\ast}_{\,2,1}(\mu)>\dim_H \SSing^{\ast}_{\,2,1}(\mu)
\]
for $\mu\geq 0.565 \ldots$ However Theorem \ref{thm:lower-general} and 
Proposition \ref{prop:packing} are not strong enough 
to get the strict inequality for $\mu\leq 0.565 \ldots$ 
\end{remark}

\subsection*{Sketch of the proofs}
Since the proofs deal only with the sets $\Sing_{\,2,1}(\mu)$, we
will  drop the subscript ${ \, }_{\,2,1}$ when there is no ambiguity. For
convenience, we replace column vectors by row vectors. We use also the following 
notation. We take $\theta$ in $\R^2$ and consider elements 
$x = (p, q)$ in $\Z^2 \times \Z_{\ge 1}$, where $p = (p_1, p_2)$ is a pair
of integers. Then, $\frac{p}{q}$ denotes the pair $(\frac{p_1}{q},\frac{p_2}{q})$. We also
write $|x| =  q$. 

The strategy of our proofs follows closely the one of \cite{cheche}.
As in this work, the guideline for the proofs relies on two simple results.  For each primitive 
vector $x=(p,q)$  of the lattice $\Z^3$ with $p$ in $\Z^{2}$ (we keep this notation throughout 
this paper) and $q$ in $\Z_{>0}$,
let $\lambda_{1}(x)$ denote the 
length of the shortest vector of the lattice $\Lambda_{x}=\Z^{2}+\Z\frac{p}{q}$.  
Roughly, the first result is: $\theta$ in $\R^{2}$ is in $\SSing(\mu)$ 
if and only if for $n$ large enough,
\[
\lambda_{1}(x_{n})\leq\left|  x_{n}\right|  ^{-\mu}
\]
where $x_{n}=(p_{n},q_{n})$ is the $n$-th term of the 
the sequence of best approximation vectors
of $\theta$  and $\left|  x_n\right|  = q_n$ (see   
Section \ref{sec:Farey} and Corollary \ref{cor:sing} for an exact statement). 
The second result is a multidimensional extension of Legendre's Theorem 
about convergents of ordinary continued fraction expansions: if $x=(p,q)$ 
is a best approximation vector of $\theta$, then 
$\theta\in B(\frac{p}{q},\frac{2\lambda	_{1}(x)}{\left|  x\right|  })$ 
and conversely, if $\theta\in B(\frac{p}{q},\frac{\lambda_{1}(x)}{2\left|  x\right|})$, 
then $x$ is a best approximation vector of $\theta$ (see Lemma~\ref{lem:BAI:2}).  
Then we use the standard strategy for computing the Hausdorff dimension of 
Cantor sets defined by a nested tree of intervals. Precisely, defining the children of an 
interval  as the immediate successors with respect to the partial order 
induced by inclusion  
of intervals,  the diameter of one interval raised to the 
power $s$ has to be compared with the sum over all the children intervals of their
diameters raised to the power $s$.  

For the upper bound, consider a set $\sigma_{\mu}(x)$ 
for each primitive vector $x=(p,q)$ in
$\Z^{2}\times\Z_{>0}$ with $\lambda_{1}(x)\leq\left|  x\right|^{-\mu}$.
This set plays the role of the children of $x$.  
The first idea is to take for $\sigma_{\mu}(x)$ the set of all possible
primitive vectors $y$ in $\Z^{2}\times \Z_{>0}$ with $\lambda_{1}(y)\leq\left|
y\right|  ^{-\mu}$ such that $x$ and $y$ are two consecutive
best approximation vectors of some $\theta$ in $\R^{d}$. If for all $x$,
\[
\sum_{y=(u,v)\in\sigma_{\mu}(x)}\left(\diam B\left(\frac{u}{v},\frac{2\lambda
	_{1}(y)}{\left|  y\right|  }\right)\right)^{s}\leq\left(\diam B\left(\frac{p}{q},\frac{2\lambda
	_{1}(x)}{\left|  x\right|  }\right)\right)^{s}%
\]
then the Hausdorff dimension of $\SSing(\mu)$ is at most equal to $s$. We make
this statement more precise by using self-similar covering introduced by the second
author (see \cite{cheung2} and Theorem~\ref{thm:SSCupper}). However the above
inequality does not hold and as in \cite{cheung2} and \cite{cheche} 
we modify the definition of the set
$\sigma_{\mu}(x)$ with an ``acceleration" by considering only a subsequence of 
the sequence of best approximations (see Definition \ref{def:sigma-upper}). 
Note that the subsequence is not the same as that in \cite{cheche}.  
Another point is that it is better to use a radius larger than 
$\frac{2\lambda_{1}(x)}{\left|  x\right|}$ (see Corollary \ref{cor:upper-self}), 
for it avoids the second acceleration 
used in \cite{cheung2}. The choice of a good radius is more delicate than in \cite{cheche}.  
With these ingredients the proof of the upper bound 
follows readily; see Section~\ref{sec:upper-bound}.

The lower bound is trickier. The idea is to find a Cantor set included in
$\SSing^{\ast}(\mu)$. This Cantor set has an ``inhomogeneous" tree structure.
For each $x=(p,q)$ such that $\lambda_{1}(x)\leq\left|  x\right|  ^{-\mu}$, 
we define a finite set $\sigma(x)$ and  a ball $B(x)$ 
such that for all $z=(u,v)$ in $\sigma(x)$, 
we have both $\lambda_{1}(z)\leq\left|  z\right|  ^{-\mu}$ and 
\[
B(z)\subset B \Bigl(\frac{u}{v},\frac{\lambda_{1}(z)}{2\left|  z\right|  } \Bigr) \subset B(x).
\]
The above inclusions ensure that  $x$ and $z$ are best approximation vectors of 
all $\theta$ in $B(z)$ which in turn will be helpful to show that the Cantor set defined 
by the sets  $\sigma(x)$ and the balls $B(x)$ is included in $\SSing^{\ast}(\mu)$ 
(see Proposition \ref{prop:SSS}).  Then, the inequality
\[
\sum_{z\in\sigma(x)}\left(\diam B\left(z\right)\right)^{s}\geq\left(\diam B\left(x \right)\right)^{s}
\]
together with a condition about the distribution in $B(x)$ of the points $z$ in $\sigma(x)$  
imply that the Hausdorff dimension of $\SSing^{\ast}(\mu)$ is 
at least equal to $s$.   
However, this program is not straightforward because the condition
about the distribution of the elements of $\sigma(x)$ 
used in \cite{cheche} does not work in our context 
(see Theorem 3.6 of \cite{cheche}).

To overcome this problem, we use a more flexible
condition which is an adaptation of the mass
distribution principle to self-similar covering; see Theorem \ref{thm:massdistribution}.
This more flexible condition, together with a careful study of the geometric
positions of the points of  $\sigma(x)$ in the ball $B(x)$    
(see Lemmas \ref{lem:counting} and \ref{lem:distortedtiling}), finally lead to the lower bound.

\subsection{Questions and problems}

In this subsection, we gather some suggestions for further research 
closely related to the present work. 

Maybe, it is possible to  adapt the 
methods of \cite{DaSc70b,KlWe}  
to solve the following problem, which seems to be rather difficult.  

\begin{problem}
Let $c$ be a real number with $0<c<1$.
What is the Hausdorff dimension    
of the set of $n\times m$ matrices such that (1.1) has a solution  $\ux$ 
in  $\Z^m$ for any sufficiently large $X$? Is this a continuous 
function of $c$? 
\end{problem}


All the results quoted above are concerned with
approximation of independent quantities in the sense that we assume that the entries
of the matrices $A$ are independent. It is a notorious fact that 
questions of approximation of dependent quantities are much more delicate. 
An emblematic example in the case of $n \times 1$ matrices is given by the 
Veronese curve $(\xi, \xi^2, \ldots , \xi^n)$. At present, we do not know the 
Hausdorff dimension of the set of real numbers $\xi$
such that the pair $(\xi, \xi^2)$ is singular.
In 2004 Roy \cite{Roy04} showed that this set is nonempty. 
In the oppposite direction, 
Shah \cite{Shah09,Shah10} has obtained several striking results on the size 
of sets of matrices with dependent entries for which Dirichlet's Theorem 
cannot be improved.

\begin{problem}
Let $n \ge 2$ be an integer.
What is the Hausdorff dimension    
of the set of real numbers $\xi$ such that $(\xi, \xi^2, \ldots , \xi^n)$ 
is singular? 
\end{problem}

The latter problem is deeply connected with the following famous conjecture of Wirsing on
approximation to real numbers by algebraic numbers of bounded degree. 
Recall that the height of an algebraic number $\alpha$, denoted by $H(\alpha)$, is the 
maximum of the absolute values of the coefficients of its minimal defining 
polynomial over $\Z$. 

\begin{problem} (Wirsing) 
Let $n \ge 2$ be an integer and $\xi$ be a transcendental real number. 
For any positive $\eps$, there exist algebraic numbers $\alpha$ of degree at most $n$ 
and of arbitrarily large height such that
$$
|\xi - \alpha| < H(\alpha)^{-n-1 + \eps}. 
$$
\end{problem}

It follows from results established in \cite{BuLa05a} that the Hausdorff dimension of the 
set of counterexamples to the Wirsing conjecture on the approximation by
algebraic numbers of degree at most $n$ is at most equal to the 
Hausdorff dimension of the set of real numbers $\xi$ such that $(\xi, \xi^2, \ldots , \xi^n)$ 
is singular. See Chapter 3 of \cite{BuLiv} for a survey of known results 
towards Wirsing's conjecture.

A further line of research is Diophantine approximation on fractal sets. 
Rather than assuming that $A$ is an arbitrary real $n \times m$ matrix, 
we restrict our attention to matrices in a given fractal set. 

\begin{problem}
What is the Hausdorff dimension of the set of singular pairs whose entries 
belong to the middle third Cantor set?
\end{problem}

Our results on the packing dimension motivate the following questions. 

\begin{problem}
Is the packing dimension of $\operatorname{Sing}^{\ast}(\mu) $ 
strictly greater than the Hausdorff dimension for all $\mu> 1/2$?
What is the value of the packing dimension of the set of singular pairs? Is it equal
to its Hausdorff dimension, that is, to $4/3$? 
\end{problem}

\section{Definitions and results about self-similar coverings}

\begin{definition}
\label{def:self-sim} Let $Y$ be a metric space. A \emph{self-similar
structure} on $Y$ is a triple $(J,\sigma,B)$ where $J$ is countable, $\sigma$
is a subset of $J^{2}$, and $B$ is a map from $J$ into the set of bounded
subsets of $Y$. A \emph{$\sigma$-admissible sequence} is a sequence
$(x_{n})_{n\in\N}$ in $J$ such that

\begin{enumerate}
\item[(i)] for all integers $n$, $(x_{n},x_{n+1})\in\sigma$.
\end{enumerate}

Let $X$ be a subset of $Y$. A \emph{self-similar covering} of $X$ is a
self-similar structure $(J,\sigma,B)$ such that, for all $\theta$ in $X$, there
exists a $\sigma$-admissible sequence $(x_{n})_{\in\N}$ in $J$ satisfying

\begin{enumerate}
\item[(ii)] $\lim_{n\to\infty}\diam  B(x_{n})=0$,

\item[(iii)] $\bigcap_{n\in\N} B(x_{n})=\{\theta\}$.
\end{enumerate}

The set covered by a self-similar structure $(J,\sigma,B)$ is the set all
$\theta$ in $Y$ with the two properties above.
\end{definition}

\textbf{Notation. } We denote by $\sigma(x)$ the set of $y$ in $J$ such that
$(x,y)\in\sigma$.

\begin{definition}
By a strictly nested self-similar structure we mean a self-similar structure $(J,\sigma,B)$
that satisfies
$\lim_{n\rightarrow\infty}\diam B(x_{n})=0$, for all $\sigma$-admissible
sequence $(x_{n})_{n\in\N}$, and $B(y)\subset B(x)$, for all $x$ in $J$
and all $y$ in $\sigma(x)$.
\end{definition}

\subsection{Upper bound for the Hausdorff dimension}

We quote a result from \cite{cheung2}. 

\begin{theorem}
(\cite{cheung2})\label{thm:SSCupper} Let $Y$ be a metric space, let $X$ be a
subset of $Y$ that admits a self-similar covering $(J,\sigma,B)$ and let $s$
be a positive real number. If 
$$
\sum_{y\in\sigma(x)}\diam B(y)^{s}\leq\diam %
B(x)^{s}, 
$$
holds for all $x$ in $J$, 
then $\dim_{H}X\leq s$.
\end{theorem}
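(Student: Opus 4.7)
The plan is to bound the $s$-dimensional Hausdorff measure of $X$ by constructing explicit $\delta$-covers extracted from the self-similar structure, and then apply countable stability of $\dim_H$. First I would choose, for each $\theta\in X$, an admissible sequence $(x_n(\theta))_{n\in\N}$ witnessing that $(J,\sigma,B)$ is a self-similar covering, and set $r(\theta):=x_0(\theta)$. Then $R:=\{r(\theta):\theta\in X\}$ is a countable subset of $J$ and $X=\bigcup_{r\in R}X_r$, where $X_r:=\{\theta\in X:r(\theta)=r\}$. By countable stability of Hausdorff dimension it then suffices to prove $\dim_H X_r\le s$ for every fixed $r\in R$.

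Fix $r\in R$ and $\delta>0$. I would work with the rooted tree $\mathcal{T}_r$ whose nodes at depth $n$ are admissible walks $(r,x_1,\dots,x_n)$ in $J$, and whose children of the node $(r,x_1,\dots,x_n)$ are the extended walks $(r,x_1,\dots,x_n,y)$ for $y\in\sigma(x_n)$. Define the frontier
\[
\mathcal{F}_\delta=\bigl\{(r,x_1,\dots,x_n)\in\mathcal{T}_r:\diam B(x_n)<\delta\text{ and }\diam B(x_i)\ge\delta\text{ for }0\le i<n\bigr\}.
\]
For every $\theta\in X_r$, condition (ii) gives $\diam B(x_n(\theta))\to 0$, so the sequence $(x_n(\theta))_{n\in\N}$ enters $\mathcal{F}_\delta$ at some unique first depth $n(\theta)$, and by (iii) we have $\theta\in B(x_{n(\theta)}(\theta))$. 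Hence $\{B(x_n):(r,x_1,\dots,x_n)\in\mathcal{F}_\delta\}$ is a countable cover of $X_r$ by sets of diameter less than $\delta$.

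The key estimate is $\sum_{(r,\dots,x_n)\in\mathcal{F}_\delta}\diam B(x_n)^s\le\diam B(r)^s$. The hypothesis says that at every internal node $(r,x_1,\dots,x_n)$ of $\mathcal{T}_r$ one has $\sum_{y\in\sigma(x_n)}\diam B(y)^s\le\diam B(x_n)^s$. A short induction on the depth of a finite pruning of $\mathcal{T}_r$ containing any prescribed finite $F\subseteq\mathcal{F}_\delta$ (repeatedly replacing a maximal-depth family of siblings by their common parent via the inequality) yields $\sum_{\mathbf{x}\in F}\diam B(x_n)^s\le\diam B(r)^s$; letting $F$ exhaust $\mathcal{F}_\delta$ gives the claim. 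It follows that the $\delta$-premeasure satisfies $\mathcal{H}^s_\delta(X_r)\le\diam B(r)^s$ for every $\delta>0$, so $\mathcal{H}^s(X_r)\le\diam B(r)^s<\infty$, hence $\dim_H X_r\le s$; countable stability then yields $\dim_H X\le s$.

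The main obstacle I anticipate is purely bookkeeping: the directed graph $(J,\sigma)$ need not be a tree, so a single vertex $y\in J$ may be reachable from $r$ by several distinct admissible walks and must be counted with the appropriate multiplicity in the sum. Working throughout with walks (i.e.\ with the path tree $\mathcal{T}_r$) rather than with reachable vertices of $J$ avoids this ambiguity and makes the descent induction completely transparent.
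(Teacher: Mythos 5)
Your proof is correct. Note that the paper does not actually prove this theorem; it is quoted from \cite{cheung2} without argument, so there is no in-text proof to compare against. Your argument is nonetheless essentially the same stopping-time construction used by Cheung in the original reference: for each $\theta$ follow its chosen admissible walk until the diameter first drops below $\delta$, observe that the resulting frontier is an antichain covering $X_r$ by sets of diameter $<\delta$, and propagate the hypothesis $\sum_{y\in\sigma(x)}\diam B(y)^s\le\diam B(x)^s$ up the finite pruned tree to bound the $\delta$-premeasure by $\diam B(r)^s$, then conclude via countable stability. The only points worth making explicit in a write-up are (a) that $\mathcal{F}_\delta$ really is an antichain of the walk-tree (immediate from the two diameter conditions in its definition), which is what guarantees that the prescribed finite $F\subseteq\mathcal F_\delta$ sits precisely at the leaves of its prefix-closure, so the contraction induction terminates at the root with the bound $\diam B(r)^s$; and (b) your remark at the end about working with walks rather than vertices of $J$ is exactly the right bookkeeping precaution, since $(J,\sigma)$ is a directed graph and a vertex can recur.
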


\subsection{Lower bound for the Hausdorff dimension}

There already exist results providing lower bounds
for the Hausdorff dimension of self 
similar structures, see \cite{cheung2} or \cite{cheche}. However these results are not 
suitable for our purpose. An adaptation of the mass distribution principle 
to self similar structures leads to a more flexible statement.  

Let $(J,\sigma,B)$ be a self-similar
structure on a complete metric space $(Y,d)$. For a subset $F$ of $Y$ and $x$ in $J$, we set%
\[
\sigma_{F}(x)=\{y\in\sigma(x):F\cap B(y)\neq\emptyset\}.
\]

\begin{theorem}
\label{thm:massdistribution} Let $(J,\sigma,B)$ be a strictly nested self-similar
structure on a complete metric space $(Y,d)$. Suppose that, for all $x\in J$, the set 
$B(x)$ is bounded and closed. Let $s$ be a positive real number and suppose that

\begin{itemize}
\item[i.] for all $x$ in $J$, $\diam B(x)>0$ and $\sum_{y\in
\sigma(x)}(\diam B(y))^{s}\geq(\diam B(x))^{s}$,

\item[ii.] for all $x$ in $J$, the sets $B(y)$, $y\in\sigma(x)$, are disjoint,

\item[iii.] there exists a constant $C$ such that for all $x$ in $J$ and all subsets
$F$ in $Y$ such that
\[
\delta(x)=\min_{y\neq y^{\prime}\in\sigma(x)}d(B(y),B(y^{\prime}%
))\leq\diam F\leq\diam B(x),
\]
we have%
\[
\frac{\sum_{y\in\sigma_{F}(x)}(\diam B(y))^{s}}%
{(\diam F)^{s}}\leq C\frac{\sum_{y\in\sigma(x)}%
(\diam B(y))^{s}}{(\diam B(x))^{s}},
\]

\end{itemize}
Then $\dim_{H}E\geq s$ and the Hausdorff 
dimension of the set covered by $(J,\sigma,B)$ is $\geq s$.
\end{theorem}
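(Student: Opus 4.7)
My plan is to prove Theorem~\ref{thm:massdistribution} by constructing a Borel measure on the covered set whose mass on arbitrary small sets satisfies a Frostman-type estimate, and then applying the classical mass distribution principle. I would fix an arbitrary $x_0 \in J$ and let $T$ denote the tree of $\sigma$-admissible sequences rooted at $x_0$. By strict nestedness, completeness of $(Y,d)$, and closedness of each $B(x)$, every infinite branch $(x_0, x_1, \ldots) \in \partial T$ determines a unique point $\pi(x_0, x_1, \ldots) = \bigcap_n B(x_n)$, and it suffices to prove $\dim_H \pi(\partial T) \geq s$, since $\pi(\partial T)$ is contained in the covered set.

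I would build a measure $\mu_0$ on cylinders of $\partial T$ by setting $\mu_0([x_0]) = (\diam B(x_0))^s$ and splitting mass at each level according to
\[
\mu_0([x_0, \ldots, x_n, y]) = \mu_0([x_0, \ldots, x_n]) \cdot \frac{(\diam B(y))^s}{\sum_{y' \in \sigma(x_n)} (\diam B(y'))^s}
\]
for $y \in \sigma(x_n)$. Condition (i) guarantees that the denominator is at least $(\diam B(x_n))^s$, so a routine induction on level yields $\mu_0([\ldots, z]) \leq (\diam B(z))^s$ for every $z \in T$. By Kolmogorov consistency, $\mu_0$ extends to a finite Borel measure on $\partial T$; its push-forward $\mu := \pi_* \mu_0$ is then a nontrivial finite Borel measure supported inside $E_0 := \pi(\partial T)$.

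The crucial step is to establish a Frostman-type bound $\mu(F) \leq C'(\diam F)^s$ for every Borel $F \subset Y$ with $\diam F \leq \diam B(x_0)$, where $C'$ depends only on the constant $C$ of condition (iii). Given such $F$ meeting $E_0$, I would walk down the tree from $x_0$: at any node $x$ satisfying both $\diam B(x) > \diam F$ and $\delta(x) > \diam F$, condition (ii) forces $|\sigma_F(x)| \leq 1$, since two distinct children meeting $F$ would produce two points of $F$ separated by at least $\delta(x) > \diam F$, contradicting the definition of $\diam F$. Consequently every path in $\pi^{-1}(F)$ shares the same initial segment until the first node $z^*$ where either $\delta(z^*) \leq \diam F$ or $\diam B(z^*) \leq \diam F$; since $\diam B(x_n) \to 0$ along any admissible sequence, such a $z^*$ must exist. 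In the case $\diam B(z^*) \leq \diam F$, one has
\[
\mu(F) = \mu_0(\pi^{-1}(F)) \leq \mu_0([\ldots, z^*]) \leq (\diam B(z^*))^s \leq (\diam F)^s,
\]
while in the case $\delta(z^*) \leq \diam F \leq \diam B(z^*)$, every path of $\pi^{-1}(F)$ leaves $z^*$ via some child in $\sigma_F(z^*)$, so by the recursive definition of $\mu_0$ together with condition (iii),
\[
\mu(F) \leq \sum_{y \in \sigma_F(z^*)} \mu_0([\ldots, z^*, y]) \leq \mu_0([\ldots, z^*]) \cdot C \cdot \frac{(\diam F)^s}{(\diam B(z^*))^s} \leq C(\diam F)^s.
\]
Combining both cases, the desired Frostman bound holds with $C' = \max(1, C)$, and the classical mass distribution principle then delivers $\dim_H E_0 \geq s$.

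The main obstacle I anticipate is the combinatorial reduction of $\pi^{-1}(F)$ above the stopping level to a single chain terminating at a unique branching node $z^*$; this is where the strictly nested hypothesis and the disjointness condition (ii) must cooperate, and once it is pinned down, condition (iii) is calibrated precisely so that the relevant sum over $\sigma_F(z^*)$ telescopes against $(\diam F)^s$. A secondary concern is ruling out atomic components of $\mu$ that could spoil the mass distribution step, but this follows automatically from the Frostman bound itself once it is established at all sufficiently small scales.
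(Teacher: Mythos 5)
Your proposal is correct and is essentially the paper's argument: you build the same recursively defined mass (the paper only differs by normalizing $\mu_0$ to a probability measure by dividing by $M(x_0)=\sum_{y\in\sigma(x_0)}(\diam B(y))^s$), prove the same induction $\mu_0([\ldots,z])\leq(\diam B(z))^s$ from condition (i), push forward along the branch map $\pi$, and establish the Frostman estimate by locating a stopping node so that condition (iii) applies. The paper phrases the stopping step as ``take the maximal cylinder $[\,x_1,\ldots,x_n]$ containing $\varphi^{-1}(F)$'' and uses maximality plus disjointness (ii) to conclude $\delta(x_n)\leq\diam F\leq\diam B(x_n)$, whereas you stop at the first node where $\delta$ or the diameter drops below $\diam F$; these are equivalent in effect, and both rely on Lemma~\ref{lem:compactness} (that $D_n\to 0$ uniformly) to guarantee the stopping node exists. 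Your observation that atomlessness is a consequence of the Frostman bound is also the intended argument (the paper handles the singleton case $\diam F=0$ by the same decay estimate on cylinders).
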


We need an auxiliary Lemma.
Let $(J,\sigma,B)$ be a self-similar structure on a complete metric space
$(Y,d)$. For $x_{0}$ in $J$, we consider the set $\Omega_{x_{0}}$ of all
admissible sequences starting at $x_{0}$ and, for a finite admissible sequence
$a_{0}=x_{0},a_{1} \ldots ,a_{n}$ in $J$, we denote by 
\[
\lbrack a_{1}, \ldots ,a_{n}]=\{(x_{n})_{n\in\N}\in\Omega_{x_{0}}%
:x_{i}=a_{i},\ i=1, \ldots ,n\}
\]
the associated cylinder. We endow $\Omega_{x_{0}}$ with the topology induced
by the product topology on $J^{\N}$.

\begin{lemma}
	\label{lem:compactness}Let $(J,\sigma,B)$ be a strictly nested self-similar
	structure on a complete metric space $(Y,d)$. Suppose that, for all $x\in J$, the set
	$B(x)$ is bounded and closed. Then $\Omega_{x_{0}}$\textit{ is a
		compact subset of }$J^{\N}$ and for all sequence 
	$(x_{n})_{n\in\N}$ in $\Omega_{x_{0}}$ there exists
	a unique point $a$ in the intersection of the closed sets 
	$B(x_{n})$\textit{, }$n\in\N$. Furthermore the map 
	$\varphi:\Omega_{x_{0}}\rightarrow Y$ defined by $\varphi
	((x_{n})_{n\in\N})=a$ is continuous and the sequence 
	\[
	D_{n}=\max\{\diam \varphi([x_{1}, \ldots ,x_{n}]):x_{1}, \ldots ,x_{n}\in
	J\}
	\]
	goes to zero when $n$ goes to infinity.
\end{lemma}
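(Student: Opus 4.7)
The plan is to establish the four assertions of the lemma in turn: compactness of $\Omega_{x_{0}}$, existence and uniqueness of the intersection point $a$, continuity of $\varphi$, and the decay $D_{n}\to 0$. The only non-routine ingredient will be the compactness step, which relies on the finite-branching property $|\sigma(x)|<\infty$ for each $x\in J$ that is implicit in all the self-similar structures built in the paper; once compactness is in hand, the other three assertions are consequences of nestedness, completeness of $Y$, and a standard compactness-continuity argument.

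For compactness, I would inductively define $J_{0}=\{x_{0}\}$ and $J_{n+1}=\bigcup_{y\in J_{n}}\sigma(y)$; by finite branching each $J_{n}$ is finite, so $\Omega_{x_{0}}$ embeds into the product $\prod_{n\geq 0} J_{n}$, which is compact by Tychonoff's theorem with the discrete topology on each factor. Admissibility is a closed condition in $J^{\N}$, since $\Omega_{x_{0}}$ is the intersection over $n\geq 0$ of the clopen cylinders $\{(y_{k}):(y_{n},y_{n+1})\in\sigma\}$ together with the closed condition $y_{0}=x_{0}$, so $\Omega_{x_{0}}$ is compact.

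For existence and uniqueness of $a$, the sets $B(x_{n})$ form a nested sequence of non-empty, closed, bounded subsets of the complete metric space $Y$ with $\diam B(x_{n})\to 0$; picking $y_{n}\in B(x_{n})$ yields a Cauchy sequence whose limit $a$ lies in every $B(x_{n})$ by closedness, and $\diam \bigcap_{n} B(x_{n}) = 0$ gives uniqueness. For continuity of $\varphi$ at $(x_{n})$, given $\varepsilon>0$ I would choose $N$ with $\diam B(x_{N})<\varepsilon$: the cylinder $[x_{1},\ldots,x_{N}]$ is an open neighborhood of $(x_{n})$ in $\Omega_{x_{0}}$, and any $(y_{n})$ in it satisfies $\varphi((y_{n}))\in B(y_{N})=B(x_{N})$, which also contains $a$, giving $d(\varphi((y_{n})),a)<\varepsilon$.

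For the last assertion, I would first note that $D_{n}$ is non-increasing because every length-$(n+1)$ cylinder is contained in its length-$n$ prefix. Suppose for contradiction that $D_{n}\geq\varepsilon>0$ for all $n$; choose cylinders $C_{n}=[x_{1}^{(n)},\ldots,x_{n}^{(n)}]$ and points $\alpha_{n},\beta_{n}\in C_{n}$ with $d(\varphi(\alpha_{n}),\varphi(\beta_{n}))\geq\varepsilon/2$. Extracting subsequences $\alpha_{n_{k}}\to\alpha^{*}$ and $\beta_{n_{k}}\to\beta^{*}$ by compactness, observe that for every fixed $m$ the two sequences agree on the first $m$ coordinates as soon as $n_{k}\geq m$; passing to the limit in the discrete topology on $J$ forces $\alpha^{*}=\beta^{*}$, and continuity of $\varphi$ then yields $\varphi(\alpha^{*})=\varphi(\beta^{*})$, contradicting the lower bound $\varepsilon/2$. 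Thus the main obstacle is conceptual rather than technical: recognizing that $|\sigma(x)|<\infty$ is required for the compactness statement and that, once compactness is granted, the rest is a standard diagonal-compactness argument.
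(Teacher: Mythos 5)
Your proposal is correct and somewhat more complete than the paper's own proof, which dispatches the first three assertions with the remark ``the only thing which is not clear is the last point'' and then proves $D_{n}\to 0$ by a Dini-type argument: the functions $d_{k}((x_{n}))=\diam\varphi([x_{1},\ldots,x_{k}])$ are continuous on the compact $\Omega_{x_{0}}$ (being locally constant on length-$k$ cylinders), non-increasing in $k$, and tend pointwise to $0$ since $d_{k}\le\diam B(x_{k})\to 0$; Dini's theorem then upgrades pointwise to uniform convergence, which is precisely $D_{n}\to 0$. Your compactness-plus-diagonal-extraction contradiction argument accomplishes exactly the same thing — it is really Dini's theorem reproved by hand — so the two routes are equivalent in substance, with the paper's being a shade more streamlined and yours a shade more self-contained. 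Your treatments of existence/uniqueness (shrinking nested closed bounded sets in a complete space) and of continuity of $\varphi$ (a cylinder of small $\diam B$ is an open neighborhood mapping into that ball) match the standard arguments the paper implicitly has in mind.

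The most valuable observation in your write-up is the one about finite branching: $\card\sigma(x)<\infty$ is genuinely needed for $\Omega_{x_{0}}$ to be compact in $J^{\mathbb{N}}$ with the discrete-product topology, and without it the lemma can actually fail (one can build a strictly nested structure with an infinitely branching root where each branch shrinks at its own rate $(1-1/i)^{n}$, so that every admissible sequence satisfies $\diam B(x_{n})\to 0$ yet $D_{n}\equiv 1$). The paper's Definition~\ref{def:self-sim} does not state this hypothesis, and the proof of Lemma~\ref{lem:compactness} silently assumes compactness; the lemma as literally stated therefore has a small gap. As you note, the hypothesis is satisfied by the concrete structure $(Q_{\sigma},\sigma,B)$ used for the lower bound (both $E_{1}(x)$ and each $D_{1}(y)$ are finite by the height and distance constraints), so nothing downstream is affected, but it should be added to the hypotheses of the lemma. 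Your only cosmetic slip is writing ``$B(y_{N})=B(x_{N})$'' in the continuity step where you mean that both sequences share the same $N$-th coordinate, hence land in the same $B(x_{N})$; the intended argument is clear.
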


\begin{proof}[Proof of the Lemma] The only thing which is not clear is the last point. Consider
	the sequence of functions $(d_{k})_{k\geq1}$ defined by%
	\[
	d_{k}((x_{n})_{n\in\N})=\diam \varphi([x_{1}%
	, \ldots ,x_{k}])
	\]
	for a sequence $(x_{n})_{n\in\N}$ in $\Omega_{x_{0}}$. By the definition
	of the topology, each $d_{k}\ $is continuous on the compact set $\Omega
	_{x_{0}}$. Clearly the sequence $(d_{k})_{k}$ is non-increasing and by
	assumption $\lim_{k\rightarrow\infty}d_{k}((x_{n})_{n\in\N})\leq
	\lim_{k\rightarrow\infty}\diam B(x_{k})=0$ for all
	$(x_{n})_{n\in\N}$ in $\Omega_{x_{0}}$, hence by Dini's theorem, the
	sequence $(d_{k})_{k\geq1}$ converges uniformly to zero. 
\end{proof}

\begin{proof}[Proof of Theorem \ref{thm:massdistribution}] We keep the notations of the Lemma. The set 
$E:=\varphi(\Omega_{x_0})$ is a compact
subset of $Y$.
It is enough to prove that there exists a probability measure
$\nu$ on $Y$ supported by $E$ such that for every Borel subset $F$ of $Y$, we have%
\[
\nu(F)\leq C(\diam F)^{s}%
\]
for some absolute constant $C$.

A map $\mu$ defined on the set of cylinders can be extended to a probability
measure on $\Omega_{x_{0}}$ if for all cylinders $[x_{1}, \ldots ,x_{n}]$ we have
the additive formula
\[
\sum_{x\in\sigma(x_{n})}\mu([x_{1}, \ldots ,x_{n},x])=\mu([x_{1}, \ldots ,x_{n}]).
\]
For all $x$ in $J$ set $M(x)=\sum_{y\in\sigma(x)}(\diam %
B(y))^{s}$. The following recursion formulas
\begin{align*}
\mu([x_{1}])  &  =\frac{(\diam B(x_{1}))^{s}}{M(x_{0})},\\
\mu([x_{1}, \ldots ,x_{n+1}])  &  =\frac{(\diam B(x_{n+1}))^{s}%
}{M(x_{n})}\mu([x_{1}, \ldots ,x_{n}]),
\end{align*}
define a measure $\mu$ on the set of cylinders. Clearly the additive formula
holds, hence $\mu$ extends to a probability measure.

Call $\nu$ the image of $\mu$ by the map $\varphi$. The support of $\nu$ is
included in $E$.

We want to check that $\nu(F)\leq C(\diam F)^{s}$ for all Borel
subset $F$ of $Y$. We can suppose that $F\subset E$.

First, let us show by induction that for all cylinders $[x_{1}, \ldots ,x_{n}]$, we
have the inequality,
\[
\mu([x_{1}, \ldots ,x_{n}])\leq\frac{(\diam B(x_{n}))^{s}%
}{(\diam B(x_{0}))^{s}}.
\]
For all $x_{1}\in\sigma(x_{0})$,
\begin{align*}
\mu([x_{1}])  &  =\frac{(\diam B(x_{1}))^{s}}{M(x_{0})}\\
&  \leq\frac{(\diam B(x_{1}))^{s}}{(\diam %
B(x_{0}))^{s}},
\end{align*}
and since $M(x_{n})\geq(\diam B(x_{n}))^{s}$,%
\begin{align*}
\mu([x_{1}, \ldots ,x_{n+1}])  &  =\frac{(\diam B(x_{n+1}))^{s}%
}{M(x_{n})}\mu([x_{1}, \ldots ,x_{n}])\\
&  \leq\frac{(\diam B(x_{n+1}))^{s}}{(\diam %
B(x_{n}))^{s}}\times\frac{(\diam B(x_{n}))^{s}}%
{(\diam B(x_{0}))^{s}}\\
&  \leq\frac{(\diam B(x_{n+1}))^{s}}{(\diam %
B(x_{0}))^{s}}.
\end{align*}

Let $F$ be a subset of $E$. If $F$ is reduced to one point $a=\varphi
((x_{n})_{n\in\N})$, we have to check that $\nu(F)=0$. By the
disjointness assumption $\varphi$ is one to one and
\[
\nu(F)\leq\nu(\varphi([x_{1}, \ldots ,x_{n}]))=\mu([x_{1}, \ldots ,x_{n}])\leq
\frac{(\diam B(x_{n}))^{s}}{(\diam B(x_{0}))^{s}}, 
\]
which goes to zero because the self-similar covering is strictly nested.

Suppose now that $\diam F>0$. By the last point of the above
lemma there is a cylinder $\mathcal{C=}[x_{1}, \ldots ,x=x_{n}]$ of maximal length
containing the image $\varphi(\mathcal{C})$ ($\mathcal{C}$ can be
$\Omega_{x_{0}}$). By maximality, there exists $y\neq y^{\prime}$ in
$\sigma(x)$ such that $F$ intersects both $B(y)$ and $B(y^{\prime})$, hence
$\diam F\geq\delta(x)$. Therefore, 
\[
\frac{\sum_{y\in\sigma_{F}(x)}(\diam B(y))^{s}}%
{(\diam F)^{s}}\leq C\frac{\sum_{y\in\sigma(x)}%
(\diam B(y))^{s}}{(\diam B(x))^{s}}=C\frac
{M(x)}{(\diam B(x))^{s}}. 
\]
By the definition of $\sigma_{F}$, we have%
\begin{align*}
F &  \subset(\cup_{y\in\sigma_{F}(x)}B(y)),\\
\nu(F) &  \leq\sum_{y\in\sigma_{F}(x)}\nu(B(y)), 
\end{align*}
and, by the definition of $\nu$ and by the disjointness assumption, %
\begin{align*}
\sum_{y\in\sigma_{F}(x)}\nu(B(y)) &  =\sum_{y\in\sigma_{F}(x)}\mu
([x_{1}, \ldots ,x,y])\\
&  =\sum_{y\in\sigma_{F}(x)}\mu([x_{1}, \ldots ,x])\frac{(\diam %
B(y))^{s}}{M(x)}.
\end{align*}
Hence, we deduce from the above inequality about cylinders that 
\begin{align*}
\nu(F) &  \leq\sum_{y\in\sigma_{F}(x)}\frac{(\diam B(x))^{s}%
}{(\diam B(x_{0}))^{s}}\frac{(\diam B(y))^{s}%
}{M(x)}\\
&  \leq\frac{C}{(\diam B(x_{0}))^{s}}(\diam %
F)^{s}.\ 
\end{align*}
\end{proof}

\subsection{Lower bound for the packing dimension}

\begin{lemma}
	\label{lem:self:packing}
	Let $(J,\sigma,B)$  be a strictly nested self-similar structure on a metric space $Y$ and  
	let $s$ be a positive real number. 
	Suppose that we have a map $x \mapsto \widehat{x}$ from $J$ to $Y$  
	and a map $B': x \mapsto B'(x)=B(\widehat{x},r(x))$ from $J$ to the set of closed balls in $Y$. 
	We also make the following assumptions: 
	
	\begin{enumerate}
		\item for all $x$ in $J$, $\sigma(x)$ is finite,
		\item there exists $k<1$ such that $B(x)\subset B(\widehat{x},kr(x))$ for all $x$ in $J$,
		\item for all $x$ in $J$, the balls $B'(y)$, $y\in\sigma(x)$, 
		are disjoint and included in $B'(x)$,
		\item  for all $\sigma$-admissible sequence $(x_{n})_{\in\N}$ in $J$, we have 
		$\lim_{n\to\infty}\diam B'(x_{n})=0$,
		\item for all $x$ in $J$, $\diam B'(x)>0$ and $\sum_{y\in
			\sigma(x)}(\diam B'(y))^{s}\geq(\diam B'(x))^{s}$.
		\end{enumerate}
	Then, the packing dimension of the set covered by $(J,\sigma,B)$ is at least equal to $s$.
\end{lemma}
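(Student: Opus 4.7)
The approach is a mass distribution argument for the packing dimension. Recall that if $\nu$ is a finite Borel measure on $Y$ with $\nu(E)>0$ and
\[
\limsup_{r\to 0}\frac{\log\nu(B(\theta,r))}{\log r}\ge s\quad\text{for $\nu$-a.e.\ }\theta,
\]
then $\dim_P E\ge s$ (mass distribution principle for packing dimension). It therefore suffices to construct a probability measure $\nu$ supported on (a portion of) the set covered by $(J,\sigma,B)$ and to exhibit, for each $\theta$ in its support, a sequence of radii $\rho_n\to 0$ with $\nu(B(\theta,\rho_n))\le C\rho_n^s$; the required $\limsup$ inequality follows immediately, since $\log C/\log\rho_n\to 0$.

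Fix $x_0\in J$ admitting an infinite admissible sequence, and set $M'(x):=\sum_{y\in\sigma(x)}(\diam B'(y))^s$, which is finite by hypothesis~(1) and positive by hypothesis~(5). The cylinder weights
\[
\mu([x_0,x_1,\ldots,x_n])=\prod_{i=1}^n\frac{(\diam B'(x_i))^s}{M'(x_{i-1})}
\]
are consistent and define a Borel probability $\mu$ on $\Omega_{x_0}$. Let $\varphi:\Omega_{x_0}\to Y$ be the map from Lemma~\ref{lem:compactness}, well defined because $\diam B(x_n)\le k\,\diam B'(x_n)\to 0$ by hypotheses~(2) and~(4) and by the strict nestedness. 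The disjointness hypothesis~(3), combined with $B(y)\subset B'(y)$ from~(2), makes $\varphi$ injective; an induction on~$n$ using~(3) yields $E\cap B'(x_n)=\varphi([x_0,\ldots,x_n])$, where $E:=\varphi(\Omega_{x_0})$ is contained in the set covered by $(J,\sigma,B)$. Pushing forward, $\nu:=\varphi_{*}\mu$ satisfies $\nu(B'(x_n))=\mu([x_0,\ldots,x_n])$.

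Given $\theta=\varphi((x_n))\in E$, set $\rho_n:=(1-k)\,r(x_n)$. Since $\theta\in B(x_n)\subset B(\widehat{x_n},k\,r(x_n))$ by hypothesis~(2), the triangle inequality yields $B(\theta,\rho_n)\subset B'(x_n)$, whence
\[
\nu(B(\theta,\rho_n))\le\mu([x_0,\ldots,x_n])\le\frac{(\diam B'(x_n))^s}{(\diam B'(x_0))^s}\le\Bigl(\frac{2}{(1-k)\,\diam B'(x_0)}\Bigr)^{\!s}\rho_n^s.
\]
The middle inequality is the telescoping consequence of hypothesis~(5): from $M'(x_{i-1})\ge(\diam B'(x_{i-1}))^s$, the product $\prod_{i=1}^n M'(x_{i-1})$ dominates $\prod_{i=0}^{n-1}(\diam B'(x_i))^s$, and cancellation leaves $(\diam B'(x_n))^s/(\diam B'(x_0))^s$. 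The last inequality uses $\diam B'(x_n)\le 2\,r(x_n)=2\rho_n/(1-k)$. Hypothesis~(4) (together with the Euclidean relation $\diam B'(x)=2r(x)$ in the setting of application) forces $\rho_n\to 0$, so the mass distribution principle yields $\dim_P E\ge s$, and the packing dimension of the set covered by $(J,\sigma,B)$ is at least~$s$.

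The main obstacle is the choice of scales $\rho_n\sim r(x_n)$ at which $B(\theta,\rho_n)$ is confined inside $B'(x_n)$ and therefore meets $E$ only through the cylinder passing through $x_n$. This crucially uses the strict inclusion in hypothesis~(2), which provides a uniform margin $(1-k)\,r(x)$ between the points of $B(x)$ and the boundary of $B'(x)$; the sibling-disjointness~(3) is what transfers the geometric containment to the measure via the identity $E\cap B'(x_n)=\varphi([x_0,\ldots,x_n])$; and hypothesis~(5) is what converts the telescoping product into an estimate of the correct polynomial order~$s$ in $\rho_n$.
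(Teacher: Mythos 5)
Your proof is correct, but it takes a genuinely different route from the paper's. The paper proceeds directly from the definition of packing measure: it uses Dini's theorem to find a uniform level $q_\eps$ at which all balls $B'(x_q)$ are smaller than $\eps$, notes that (by the disjointness hypothesis and an induction with hypothesis~(5)) the family $\{B'(x)\}_{x\in J_q}$ is a disjoint $\eps$-packing with $\sum_{x\in J_q}(\diam B'(x))^s\geq(\diam B'(x_0))^s$, uses hypothesis~(2) to recenter each such ball at a nearby point of $E$, and thereby lower-bounds the \emph{pre}-packing measure; it then upgrades this to a lower bound on the genuine packing measure (which involves an infimum over countable covers) via a Baire category argument applied to the compact pieces $\varphi(\text{cylinder})$. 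You instead push forward the same telescoping cylinder measure (built from $\diam B'$ rather than $\diam B$) to a probability $\nu$ on $E=\varphi(\Omega_{x_0})$, use the margin $(1-k)r(x)$ from hypothesis~(2) to confine $B(\theta,\rho_n)$ inside $B'(x_n)$, and conclude $\nu(B(\theta,\rho_n))\leq C\rho_n^s$; the conclusion then follows from the mass distribution principle for packing dimension. Your route packages the Baire/countable-cover step inside a citable lemma and mirrors the structure of the Hausdorff lower-bound proof (Theorem~\ref{thm:massdistribution}), which makes the parallel between the two dimensions transparent; the paper's approach is more self-contained and makes the role of the uniform level $q_\eps$ explicit. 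Both arguments share the same minor implicit restriction, which you acknowledge: passing between $\diam B'(x)$ and the radius $r(x)$ (needed to know $\rho_n\to 0$, and also needed in the paper's argument to convert a diameter bound into a radius bound for the $\delta$-packing) uses the relation $\diam B(\widehat{x},r)\asymp r$, which holds in the intended ambient space $\R^2$ but not in an arbitrary metric space; likewise the mass distribution principle for packing measure relies on a covering property (Vitali/Besicovitch) available in $\R^n$. Two small points worth tightening: you should note explicitly that Lemma~\ref{lem:compactness} (hence the existence of $\varphi$) requires completeness and closed bounded $B(x)$, a hypothesis not repeated in the statement but tacitly assumed by the paper as well; and in your inductive claim $E\cap B'(x_n)=\varphi([x_0,\ldots,x_n])$, the inductive step uses $B'(x_n)\subset B'(x_{n-1})$, which is part of hypothesis~(3) — it would be worth saying so.
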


\begin{proof} We keep the notations of the previous section and consider, for $x_{0}\in J$, 
the set $\Omega_{x_{0}}$ of all
admissible sequences starting at $x_{0}$. We are going to show that
\[
\dim_P E\geq s.
\]
Let $\varepsilon$ be a positive real number. As in the proof of Lemma \ref{lem:compactness}, 
Dini's theorem implies that
\[
\lim_{p \rightarrow \infty}\sup\{\diam B'(x_p):(x_n)_{n\in \N}\in \Omega_{x_0}\}=0.
\] 
Therefore, there exists an integer $q_{\eps}$ such that 
$$
\sup\{\diam B'(x_{q_{\eps}}):(x_n)_{n\in \N}\in \Omega_{x_0}\}\leq \varepsilon.
$$
For a positive integer $q$, let $J_q$ be the set of $x$ in $J$ such that 
there exists a $\sigma$-admissible sequence $x_0,x_1, \ldots ,x_q$ with $x_q=x$. 
The disjointness property in item 3 
implies that the sets $\sigma(x)$, $x\in J_q$, are disjoint. 
Hence, we have a disjoint union $J_{q+1}=\cup_{x \in J_q}\sigma(x)$. 
An easy induction together with item 5 implies that for all $q$,
\[
\sum_{x\in J_{q}}(\diam B'(x))^{s}\geq(\diam B'(x_0))^{s},
\]
hence we would have shown that the $\eps$-packing measure satisfies
\[
\mathcal {P}^s_{\varepsilon}(E)\geq \diam B'(x_0))^{s},
\] 
if the balls $B'(x)$, $x \in J_{q_{\eps}}$, were centered at points in $E=\varphi(\Omega_{x_0})$. 
Now, by item 2, the set 
$\Fi([x_0, \ldots ,x_q])$ is included in the ball $B'(\widehat{x}_q,kr(x_q))$, 
hence there is a point $y(x_q)\in E$ such that the ball $B(y(x_q),(1-k)r(x_q))$ 
is included in the ball $B(\widehat{x}_q,r(x_q))$. It follows that 
\[
\sum_{x\in J_{q}}(\diam B(y(x),(1-k)r(x)))^{s}\geq((1-k)\diam B'(x_0))^{s}, 
\]
which in turn implies that $\mathcal P^{s'}(E)=\infty$ for all $s'<s$.
It remains to show that the packing measure $p^{s'}(E)$ does not vanish. 
This is proved by means of a standard argument. If $(E_i)_{i\in\
N}$ is any covering of $E$, then, by Baire's Theorem, 
one of the closure $F_i=\bar{E_i}$, say $F_q$, contains a subset of $E$ 
of nonempty relative interior. It follows that there exists a cylinder $C=[a_0, \ldots ,a_j]$ 
of $\Omega_{x_0}$ such that $\Fi(C)\subset F_q$. 
Now, the previous way of reasoning implies that 
\[
\mathcal P^s(\Fi(C))\geq ((1-k)\diam B'(a_j))^s, 
\]
hence, for all $s'<s$,
\[
\mathcal P^{s'}(F_q)=\mathcal P^{s'}(E_q)=\infty
\]
and $p^{s'}(E)=\infty$. 
\end{proof}

\section{Farey Lattices and best approximants \label{sec:Farey}}

From now on we suppose that $\R^2$ is equipped with the standard Euclidean norm $\|.\|_e $.

Let the set of primitive vectors in $\Z^{3}$ corresponding to
rationals in $\Q^{2}$ in their ``lowest terms representation" be
denoted by
\[
Q=\{ (p_{1},p_{2},q)\in\Z^{3}: \gcd(p_{1},p_{2},q)=1, q>0 \}.
\]
Given $x=(p,q)\in Q$, where $p\in\Z^{2}$, we use the notation
\[
\left\vert  x \right\vert=q \quad\text{ and }\quad\widehat x=\frac{p}{q}.
\]
For $x$ in $Q$, let
\[
\Lambda_{x} := \Z^{2}+\Z\widehat x = \pi_{x}(\Z^{3})
\]
where $\pi_{x}:\R^{3}\to\R^{2}$ is the "projection along the lines
parallel to $x$" given by the formula $\pi_{x}(m,n)=m-n\widehat x$ for
$(m,n)\in\R^{2}\times\R$. Observe that $\vol \Lambda
_{x}=\left\vert  x \right\vert^{-1}$.

Given a norm on $\R^{2}$, we denote the successive minima of $\Lambda_{x}$
by $\lambda_{i}(x)$ and the \emph{normalized} successive minima by
\[
\widehat\lambda_{i}(x) := \left\vert  x \right\vert^{1/2}\lambda_{i}(x) \quad\text{ for }\quad
i=1,2.
\]

We collect without proof a few lemmas the proof of which can be found in  \cite{cheung2} and \cite{cheche}.

\subsection{Inequalities of best approximation}
The ordinary continued fraction expansion is a very efficient tool for the study of Diophantine exponents of a single real number.
In higher dimensions, it is convenient to replace the ordinary continued fraction expansion by the sequence of best Diophantine approximations vectors because a weak form of many properties of the one-dimensional expansion still hold.  

Recall that the sequence $(q_n)_{n\ge 0}$ of 
\emph{best simultaneous approximation denominators} of $\theta\in\R^2$ 
with respect to the norm $\|\cdot\|_e$ is defined by the recurrence relation 
$$
q_0=1,\quad q_{n+1}=\min\{q\in\N: q>q_n, \dist(q\theta,\Z^2)<\dist(q_n\theta,\Z^2)\}.
$$  
By definition, the sequence $(q_n)_{n\ge 0}$ is strictly increasing, 
while the sequence $(r_n)_{n\ge 0}$ 
where $r_n=\dist(q_n\theta,\Z^2)$, is strictly decreasing.  
These sequences are infinite if and only if $\theta\in\R^2\setminus\Q^2$.  
For each $n\ge 0$, we choose $p_n$ so that $\|q_n\theta-p_n\|_e=r_n$
and set $x_n=(p_n,q_n)\in\Z^2\times\Z_{>0}$. 
It is customary to refer to $(x_n)_{n\ge 0}$ as 
\emph{the sequence of best simultaneous approximation vectors}, 
even though the choice of $p_n$ need not be unique.\footnote{It is unique 
	as soon as $q_n$ is large enough, e.g. if $q_n>(4\mu_2/\lambda_1(\Z^2))^2$.  
	See \cite{lagarias1} or Remark~2.13 of \cite{cheung2}.}
See \cite{chevallier1,lagarias1,lagarias2,moshchevitin07}
for more about best approximations.  In what follows we shall often write 
best approximation instead of best simultaneous approximation vector.

First we qote a result that generalizes \emph{Legendre's} Theorem: \emph{$p/q$ is a convergent of $\alpha \in \R$ as soon as $|\alpha-p/q|<1/2q^2$.}   
Denote by $\mu_2$ the supremum of $\lambda_1(L)$ over all $2$-dimensional 
lattices $L\subset\R^2$ of covolume $1$.  

\begin{lemma}[Thm.~2.11 of \cite{cheung2}]\label{lem:BAI:2}
	For $x\in Q$, let $\Delta(x)=\{\theta:\widehat x\textrm{ is a best approximation of }\theta\}$.  
	If $\left\vert  x \right\vert>\left(\frac{\mu_2}{\lambda_1(\Z^2)}\right)^2$, then 
	$$
	\Bar {B}\left(\widehat x,\frac{\lambda_1(x)}{2\left\vert  x \right\vert}\right) \subset 
	\Delta(x) \subset B\left(\widehat x,\frac{2\lambda_1(x)}{\left\vert  x \right\vert}\right),
	$$
	where $\Bar {B}$ denote the closed ball.    
\end{lemma}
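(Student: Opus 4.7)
The plan is to set $\widehat{x}=p/q$ so that $\|q\theta-p\|_e=q\|\theta-\widehat{x}\|_e$, and then verify each inclusion from the definition of best approximation together with two small lemmas-in-passing about the Farey lattice $\Lambda_x=\Z^2+\Z\widehat{x}$. The first is that for every $(p',q')$ with $0<q'<q$, the element $q'\widehat{x}-p'=(q'p-qp')/q$ is a nonzero point of $\Lambda_x$ (since $\widehat{x}$ is in lowest terms with denominator $q>q'$), whence $\|q'\widehat{x}-p'\|_e\ge\lambda_1(x)$. The second is that the hypothesis $|x|>(\mu_2/\lambda_1(\Z^2))^2$, combined with the standard Minkowski-type bound $\lambda_1(x)\le\mu_2/\sqrt{|x|}$ coming from $\vol\Lambda_x=1/|x|$, forces $\lambda_1(x)<\lambda_1(\Z^2)$, so no length-minimizer of $\Lambda_x$ lies in $\Z^2$.

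For the lower inclusion I would assume $\|\theta-\widehat{x}\|_e\le\lambda_1(x)/(2|x|)$, so $\|q\theta-p\|_e\le\lambda_1(x)/2$. Then for any $(p',q')$ with $0<q'<q$, the reverse triangle inequality and the first observation give
\[
\|q'\theta-p'\|_e\ge\lambda_1(x)-\tfrac{q'}{2q}\lambda_1(x)>\tfrac{1}{2}\lambda_1(x)\ge\|q\theta-p\|_e,
\]
which yields $\dist(q'\theta,\Z^2)>\dist(q\theta,\Z^2)$. For the closest-integer condition at denominator $q$, any $p'\in\Z^2\setminus\{p\}$ satisfies $\|q\theta-p'\|_e\ge\|p-p'\|_e-\lambda_1(x)/2\ge 1-\lambda_1(x)/2>\lambda_1(x)/2$, using the second observation. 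Taken together these make $x$ a term of the best-approximation sequence of $\theta$.

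For the upper inclusion, assume $x$ is a best approximation of $\theta$ and choose $(v,k)\in\Z^2\times\Z$ with $\|k\widehat{x}-v\|_e=\lambda_1(x)$. After reducing $k$ modulo $q$ (adjusting $v$ by the corresponding multiple of $p$), and using the second observation to rule out $k\equiv 0\pmod q$, I may take $0<k<q$; the identity $(q-k)\widehat{x}-(p-v)=-(k\widehat{x}-v)$ shows that the involution $(v,k)\leftrightarrow(p-v,q-k)$ preserves the Euclidean length, so I may further assume $k\le q/2$. Now the triangle inequality gives $\|k\theta-v\|_e\le\lambda_1(x)+k\|\theta-\widehat{x}\|_e$, while the best-approximation property forces $\|k\theta-v\|_e\ge\dist(k\theta,\Z^2)>\|q\theta-p\|_e=q\|\theta-\widehat{x}\|_e$. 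Subtracting gives $(q-k)\|\theta-\widehat{x}\|_e<\lambda_1(x)$, and $q-k\ge q/2$ then yields $\|\theta-\widehat{x}\|_e<2\lambda_1(x)/|x|$.

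The only step that is not a routine triangle-inequality estimate is the symmetry argument in the upper inclusion: the bound $(q-k)\|\theta-\widehat x\|_e<\lambda_1(x)$ becomes vacuous when the denominator part $k$ of the length-minimizer sits near $q$, and the remedy is the involution $(v,k)\leftrightarrow(p-v,q-k)$ paired with the hypothesis that excludes $k=0$. I expect this to be the only genuinely clever move; the rest follows from the fact that $\widehat{x}$ is in lowest terms.
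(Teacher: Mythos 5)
Your argument is correct. The paper states this lemma without proof, citing Theorem 2.11 of Cheung's 2011 Annals paper; your proof — using that $q'\widehat x - p'$ is a nonzero element of $\Lambda_x$ for $0<q'<q$, that the size hypothesis forces $\lambda_1(x)<\lambda_1(\Z^2)$, and the lift-with-$k\le q/2$ reduction for the upper inclusion — is the standard argument for that theorem and matches the intended approach.
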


The unimodular property, $\left\vert p_{n+1}q_n-q_{n+1}p_n\right\vert=1$, which hold for two consecutive convergents $\frac{p_{n}}{q_n}$ and $\frac{p_{n+1}}{q_{n+1}}$ of the ordinary continued fraction expansion cannot be extended to best Diophantine approximations in higher dimensions (see \cite{chevallier1} and \cite{moshchevitin07}). However (i) of Lemma \ref{lem:BAI:3} can be seen as a  weak form of the unimodular property.

The notation $x \asymp_2 y$ means $\frac12y\le x\le 2y$.  

\begin{lemma}[\cite{cheung2}, \cite{cheche}]\label{lem:BAI:3}
	Let $x_n=(p_n,q_n), n\ge0, $ be the sequence of best approximation vectors of $\theta\in\R^2$.  
	Then 
	\begin{enumerate}
		\item[(i)] $\|\widehat x_n-\widehat x_{n+1}\|_e < \frac{4\lambda_1(x_{n+1})}{|x_n|}$.  
		\item[(ii)] For all $k\ge0$, 
		$\|\widehat x_n-\widehat x_{n+k}\|_e < \frac{4\lambda_1(x_n)}{|x_n|}$.  
		\item[(iii)] For all $y=(p,q)\in\Z^{2+1}$ 
		with $0<q<|x_n|$, $\|p-q\theta\|_e \asymp_2\|p-q\widehat x_n\|_e$.  
	\end{enumerate}
\end{lemma}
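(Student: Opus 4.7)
My plan is to prove the three assertions in the order (iii), (i), (ii), since the last two can be deduced from (iii) combined with Lemma~\ref{lem:BAI:2}.  The algebraic backbone throughout is the identity
$$\widehat{x}_n - \widehat{x}_m \;=\; \frac{p_n - q_n\widehat{x}_m}{q_n} \;=\; \frac{\pi_{x_m}(p_n,q_n)}{|x_n|},$$
obtained by writing $\widehat{x}_n = p_n/q_n$, together with two consequences of the definition of best approximation: $\dist(q\theta,\Z^2)\ge r_{n-1}$ for all $0<q<|x_n|$, and the strict monotonicity $r_{n+1}<r_n$.

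For (iii), observe that the vectors $p-q\widehat{x}_n$ and $p-q\theta$ differ by $q(\theta-\widehat{x}_n)$ of Euclidean norm $qr_n/|x_n|<r_n$.  Since $\|p-q\theta\|_e \ge r_{n-1} > r_n$, the triangle inequality yields $\|p-q\widehat{x}_n\|_e < 2\|p-q\theta\|_e$.  For the reverse bound, since $(p_n,q_n)$ is primitive, the order of $\widehat{x}_n$ in $\R^2/\Z^2$ equals $|x_n|$ exactly, so for $0<q<|x_n|$ the vector $p-q\widehat{x}_n$ is a nonzero element of $\Lambda_{x_n}$, giving $\|p-q\widehat{x}_n\|_e \ge \lambda_1(x_n)$.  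Combining this with the bound $r_n<2\lambda_1(x_n)$ from Lemma~\ref{lem:BAI:2}, one controls the error term $qr_n/|x_n|$ by a constant multiple of $\|p-q\widehat{x}_n\|_e$ and derives the factor-two comparison.

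For (i), apply the identity with $m=n+1$: the triangle inequality gives
$$\|\pi_{x_{n+1}}(p_n,q_n)\|_e \;\le\; r_n + q_n r_{n+1}/q_{n+1} \;<\; 2r_n,$$
so it suffices to prove $r_n \le 2\lambda_1(x_{n+1})$.  For this, take any nonzero $v=a-b\widehat{x}_{n+1}\in\Lambda_{x_{n+1}}$ and reduce $(a,b)$ modulo $(p_{n+1},q_{n+1})$ so that $0\le b<|x_{n+1}|$.  When $0<b<|x_{n+1}|$, part (iii) applied with $n$ replaced by $n+1$ yields $\|v\|_e \ge \frac12\|a-b\theta\|_e \ge r_n/2$, where $\|a-b\theta\|_e\ge\dist(b\theta,\Z^2)\ge r_n$ follows from $b<|x_{n+1}|$ and the best-approximation property; the case $b=0$ is handled by $\|a\|_e\ge 1$, which dominates $r_n/2$ for $n$ large.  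Dividing by $|x_n|$ gives the claim.

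For (ii), the same identity with $m=n+k$ combined with the triangle inequality yields $\|\widehat{x}_n-\widehat{x}_{n+k}\|_e \;=\; \|\pi_{x_{n+k}}(p_n,q_n)\|_e/|x_n|$ and $\|\pi_{x_{n+k}}(p_n,q_n)\|_e < r_n + q_n r_{n+k}/q_{n+k} < 2r_n$, whence $\|\widehat{x}_n-\widehat{x}_{n+k}\|_e < 2r_n/|x_n|$, and Lemma~\ref{lem:BAI:2} gives $r_n<2\lambda_1(x_n)$, completing the proof.  I expect the main technical obstacle to be the lower bound in (iii): a direct triangle-inequality estimate using $\|p-q\widehat{x}_n\|_e\ge\lambda_1(x_n)>r_n/2$ only yields factor $<3$, so extracting the factor two requires either a refinement of the Lemma~\ref{lem:BAI:2} inclusion or a case split separating $q<|x_n|/2$ from $q\ge|x_n|/2$, where in the latter range one exploits that $(p_n-p, q_n-q)$ is another integer pair of denominator $<|x_n|$ to which the best-approximation bound applies.
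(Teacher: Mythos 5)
The overall strategy of proving (iii) first and then deducing (i), (ii) is reasonable, and your proof of (ii) and of the easy inequality $\|p-q\widehat{x}_n\|_e\le 2\|p-q\theta\|_e$ in (iii) is correct. The difficulty, as you candidly note, is the reverse inequality $\|p-q\theta\|_e\le 2\|p-q\widehat{x}_n\|_e$, and I do not think your sketch closes the gap. Writing $A=\|p-q\theta\|_e$, $B=\|p-q\widehat{x}_n\|_e$, $\eps=qr_n/|x_n|$, the triangle inequality gives $A\le B+\eps$, so the constant-$2$ comparison requires $\eps\le B$. When $q\le|x_n|/2$ you indeed have $\eps\le r_n/2<\lambda_1(x_n)\le B$ and the argument works. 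But for $q>|x_n|/2$ the switch to $(p',q')=(p_n-p,q_n-q)$ does not by itself finish the job: while $\|p'-q'\widehat{x}_n\|_e=B$ and the Case 1 bound gives $\|p'-q'\theta\|_e\le 2B$, recovering $A$ from this costs a triangle inequality $A\le r_n+\|p'-q'\theta\|_e\le r_n+2B$, and $r_n$ can only be dominated by $2\lambda_1(x_n)\le 2B$, giving $A<4B$ — worse than the factor $3$ you already had. In short, you never establish $\eps\le B$ when $q$ is close to $|x_n|$, and the stated factor $2$ remains unproved.

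This gap propagates. In your proof of (i), the inequality $\lambda_1(x_{n+1})\ge r_n/2$ is obtained by applying the hard direction of (iii) (with $n\mapsto n+1$) to a shortest vector of $\Lambda_{x_{n+1}}$, so a factor-$3$ version of (iii) only yields $\lambda_1(x_{n+1})\ge r_n/3$ and hence the weaker bound $\|\widehat{x}_n-\widehat{x}_{n+1}\|_e<6\lambda_1(x_{n+1})/|x_n|$. In fact (i) can be decoupled from (iii) entirely if you reduce the denominator $b$ of the shortest vector to $|b|\le q_{n+1}/2$ rather than $0\le b<q_{n+1}$: then the perturbation $|b|r_{n+1}/q_{n+1}\le r_{n+1}/2<r_n/2$ is at most half of $\|a-b\theta\|_e\ge r_n$, giving $\lambda_1(x_{n+1})> r_n/2$ directly without invoking (iii). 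So the logical dependence you introduced is both unnecessary and, given the gap in (iii), harmful. To fix (iii) with the exact constant $2$ you would need either a sharper lower bound on $B$ (something closer to $\lambda_1(x_n)\ge r_n$, which the $r_n<2\lambda_1(x_n)$ estimate from Lemma \ref{lem:BAI:2} does not supply), or an argument that handles the range $q\in(|x_n|/2,|x_n|)$ without paying the extra $r_n$; your proposal supplies neither.
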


The previous lemma allows to almost characterize 
the set $\SSing(\mu)$ with best approximation vectors. 

\begin{corollary}
	\label{cor:sing}
	Let $\mu^{\prime}>\mu>0$ and let $\theta$ be in $\R^2$. 
	Call $x_n=(p_n,q_n), n\ge0$,  the sequence of best approximation vectors of $\theta\in\R^2$.	
	If $\theta\in \SSing(\mu^{\prime})$, then for all $n$ large enough
	\[
	\lambda_1(x_n)\leq \left\Vert q_{n-1}\widehat{x}_{n}-p_{n-1}\right\Vert_e
	\leq \left\vert x_{n} \right\vert^{-\mu}.
	\]
	Conversely, if
	\[
	\lambda_1(x_n)\leq 
	\left\vert x_{n} \right\vert^{-\mu^{\prime}}
	\] 
	for all $n$ large enough, then $\theta\in \SSing(\mu)$.
\end{corollary}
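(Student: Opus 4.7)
The plan is to deduce both directions from the approximate comparison $\lambda_1(x_n)\asymp_2 r_{n-1}$, where $r_{n-1}=\dist(q_{n-1}\theta,\Z^2)$, combined with the explicit description $\Lambda_{x_n}=\pi_{x_n}(\Z^3)$ via $\pi_{x_n}(m,k)=m-k\widehat{x}_n$. Only two one-sided inequalities are actually needed, one for each direction; each follows either from a short triangle-inequality computation or from Lemma~\ref{lem:BAI:3}(iii), and the gap $\mu'>\mu$ absorbs the multiplicative constant $2$ that appears in both cases.

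For the direct implication I would first note that $p_{n-1}-q_{n-1}\widehat{x}_n=\pi_{x_n}(p_{n-1},q_{n-1})$ is a nonzero element of $\Lambda_{x_n}$: if it vanished, $(p_{n-1},q_{n-1})$ would be an integer multiple of the primitive vector $x_n$, contradicting $0<q_{n-1}<q_n$. This gives the easy inequality $\lambda_1(x_n)\le\|q_{n-1}\widehat{x}_n-p_{n-1}\|_e$. Next I would decompose
\begin{equation*}
q_{n-1}\widehat{x}_n-p_{n-1}=(q_{n-1}\theta-p_{n-1})+q_{n-1}(\widehat{x}_n-\theta),
\end{equation*}
use the identity $\|\widehat{x}_n-\theta\|_e=r_n/q_n$ together with $q_{n-1}<q_n$ and $r_n<r_{n-1}$, and apply the triangle inequality to obtain $\|q_{n-1}\widehat{x}_n-p_{n-1}\|_e\le 2r_{n-1}$. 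Finally, the hypothesis $\theta\in\SSing(\mu')$ applied at $X=q_n-1$ produces, for $n$ large, some $0<q\le q_n-1$ with $\dist(q\theta,\Z^2)\le(q_n-1)^{-\mu'}$; by the definition of best approximation, $r_{n-1}$ is the minimum of $\dist(q\theta,\Z^2)$ over $\{1,\dots,q_n-1\}$, so $r_{n-1}\le(q_n-1)^{-\mu'}$, and since $\mu'>\mu$ the bound $2r_{n-1}\le q_n^{-\mu}$ holds for all sufficiently large $n$.

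For the converse I would establish the reverse inequality $r_{n-1}\le 2\lambda_1(x_n)$ as follows: for $n$ large enough that $\lambda_1(x_n)<1$, the minimum defining $\lambda_1(x_n)$ is attained by some $(p,q)$ with $0<q<q_n$ (a pure lattice vector $p\in\Z^2\setminus\{0\}$ would contribute at least $1$), and Lemma~\ref{lem:BAI:3}(iii) yields $r_{n-1}\le\|p-q\theta\|_e\le 2\|p-q\widehat{x}_n\|_e=2\lambda_1(x_n)$. Given $X$ large, I would pick $n$ with $q_n\le X<q_{n+1}$; the hypothesis $\lambda_1(x_{n+1})\le q_{n+1}^{-\mu'}$ then gives
\begin{equation*}
\dist(q_n\theta,\Z^2)=r_n\le 2\lambda_1(x_{n+1})\le 2q_{n+1}^{-\mu'}<2X^{-\mu'}\le X^{-\mu},
\end{equation*}
where the last inequality holds as soon as $X\ge 2^{1/(\mu'-\mu)}$. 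Since $q_n\le X$, this exhibits an integer solution of (1.3) at scale $X$, witnessing $\theta\in\SSing(\mu)$.

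The proof is essentially mechanical once the dictionary between $\lambda_1(x_n)$ and $r_{n-1}$ is in place. The only mildly delicate point is locating the minimum defining $\lambda_1(x_n)$ among vectors with $0<q<q_n$ rather than on the $q=0$ axis, which follows from $\lambda_1(x_n)\to 0$. All factors of $2$ incurred by the triangle inequality and by Lemma~\ref{lem:BAI:3}(iii) are absorbed by the strict gap $\mu'>\mu$, which is precisely the reason the statement must use two distinct exponents on the two sides of the equivalence.
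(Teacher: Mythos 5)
Your argument is correct and follows the same essential route as the paper: both directions pass between $\theta$ and $\widehat{x}_n$ via Lemma~\ref{lem:BAI:3}(iii), with the strict gap $\mu'>\mu$ absorbing the constants. The one point of departure is in the converse: the paper obtains $r_{n-1}\le 8\lambda_1(x_n)$ by bounding $\|q_{n-1}\widehat{x}_n-p_{n-1}\|_e$ with Lemma~\ref{lem:BAI:3}(i), whereas you locate a short lattice vector $p-q\widehat{x}_n$ of $\Lambda_{x_n}$ directly, verify $0<q<q_n$ from $\lambda_1(x_n)<1$, and then feed it into Lemma~\ref{lem:BAI:3}(iii) and the minimality of $r_{n-1}$, yielding the sharper $r_{n-1}\le 2\lambda_1(x_n)$; this is a clean, slightly more elementary variant that avoids part~(i) but lands in the same place.
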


\begin{proof}
	By Lemma \ref{lem:BAI:3} (iii), if $\theta\in \SSing(\mu^{\prime})$, then for all $n$ large enough
	\begin{align*}
	\lambda_1(x_n)&\leq \left\Vert q_{n-1}\widehat{x}_{n}-p_{n-1}\right\Vert_e\\
	&\leq 2\left\Vert q_{n-1}\theta-p_{n-1}\right\Vert_e \\
	&\leq 2(q_n-1)^{-\mu^{\prime}} \\
	&\leq \left\vert x_{n} \right\vert^{-\mu}.
	\end{align*}
	Conversely, if 
	$
	\lambda_1(x_n)\leq 
	\left\vert x_{n} \right\vert^{-\mu^{\prime}}
	$, then  by Lemma \ref{lem:BAI:3} (iii) and (i), for all $q_{n-1}\leq q<q_{n}$, we have
	\begin{align*}
	d(\{\theta, \ldots ,q\theta\},\Z^{2})&=\left\Vert q_{n-1}\theta-p_{n-1}\right\Vert_e\\
	&\leq 2\left\Vert q_{n-1}\widehat{x}_{n}-p_{n-1}\right\Vert_e\\
	&\leq 8\lambda_1(x_n)\\
	&\leq 8q_n^{-\mu^{\prime}}
	\leq q^{-\mu}, 
	\end{align*} 
	when $n$ is large enough.
\end{proof}

\subsection{The subspace $H_x$ }\label{S:hyperplane}
Call $x_n=(p_n,q_n)$, $n\in \N$, the sequence of best approximation vectors of $\theta \in \R^2$.
Corollary \ref{cor:sing} shows that if  $\theta$ is in $\SSing(\mu)$ with $\mu>\frac12$, then $\widehat{\lambda}(x_n)\rightarrow 0$ when $n$ goes to $\infty$. 
It follows that the shortest vector of the lattice $\Lambda_{x_n}$ is very small compare to $\lambda_2(x_n)$ when $n$ is large.
So, at the scale of the second minimum, the lattice $\Lambda_{x_n}$ looks like an evenly spaced union of lines parallel to the shortest vector,  with very closed points evenly spaced in these lines.  
This picture is helpful  and
shows that the line defined by the shortest vector  should play an important role.
The subspace $H_x$ defined below could have been defined with the shortest vector of the lattice $\Lambda_x$. However as  in \cite{cheche} we use the volume instead of  the length because it works in any dimension. \medskip

For each $x$ in $Q$ we fix once and for all a co-dimension one sub-lattice of
$\Lambda_{x}$ of minimal volume and call it $\Lambda^{\prime}_{x}$. Let
$H_{x}=\pi_{x}^{-1}H^{\prime}_{x}$ where $H^{\prime}_{x}$ is the real span of
$\Lambda^{\prime}_{x}$. Thus,
\[
\Lambda^{\prime}_{x} = \Lambda_{x} \cap H^{\prime}_{x}.
\]

The two Lemmas below are easy and proved in \cite{cheche}.

\begin{lemma}\label{lem:tangent:space}
Let $x$ and $y$ be in $Q$.  
Then, $y\in H_x$ if and only if $\;\widehat{y} \in \widehat{x}+H'_x\,$.  
\end{lemma}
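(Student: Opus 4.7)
The plan is simply to unpack the definitions of $\pi_x$, $H_x$, and $H'_x$, and use linearity of the subspace $H'_x$.

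First I would write $y = (p',q') \in Q$ with $p' \in \Z^2$, $q' > 0$, and $\widehat{y} = p'/q'$. By definition, $y \in H_x = \pi_x^{-1}(H'_x)$ means $\pi_x(y) \in H'_x$. Applying the explicit formula $\pi_x(m,n) = m - n\widehat{x}$ from the definition of $\pi_x$ gives
\[
y \in H_x \iff p' - q'\widehat{x} \in H'_x.
\]

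Next I would handle the other side. The condition $\widehat{y} \in \widehat{x} + H'_x$ is, by definition, equivalent to $\widehat{y} - \widehat{x} \in H'_x$, i.e.,
\[
\frac{p'}{q'} - \widehat{x} \in H'_x.
\]
Since $H'_x$ is the real span of $\Lambda'_x$, it is a linear subspace of $\R^2$, and since $q' > 0$ is a nonzero scalar, membership in $H'_x$ is preserved under multiplication by $q'$. Multiplying through by $q'$ therefore gives the equivalent statement $p' - q'\widehat{x} \in H'_x$, which matches the characterization from the first paragraph.

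There is no genuine obstacle here; the lemma is a direct dictionary between the affine description of $H'_x$ in $\R^2$ (shifted by $\widehat{x}$) and the linear description of $H_x$ in $\R^3$ pulled back through $\pi_x$. The only point to mention explicitly is that $q' \neq 0$, which is built into the definition of $Q$, so the scaling argument is valid.
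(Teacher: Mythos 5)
Your proof is correct, and it is exactly the direct unpacking of the definitions one expects: $y\in H_x$ reduces to $\pi_x(y)=p'-q'\widehat x\in H'_x$, while $\widehat y\in\widehat x+H'_x$ reduces to $\frac{1}{q'}(p'-q'\widehat x)\in H'_x$, and the two agree because $H'_x$ is a linear subspace and $q'\neq 0$. The paper itself just cites \cite{cheche} for this lemma rather than proving it, but your argument is the natural and complete one.
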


\begin{lemma}\label{lem:coset:gap}
Let $x$ and $y$ be in $Q$.
Suppose that $\left\vert  x \right\vert\le\left\vert  y \right\vert,\, y\in H_x,$ 
    and $\|\widehat x-\widehat y\|_e\le\frac{4\lambda_1(x)}{\left\vert  x \right\vert}$.  
Then $\lambda_2(x)\asymp\lambda_2(y) $.  
\end{lemma}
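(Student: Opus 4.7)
The plan is to control both $\lambda_2(x)$ and $\lambda_2(y)$ via a common geometric quantity, namely the height $h_x := 1/(|x|\,\lambda_1(x))$ of $\Lambda_x$ above the line $H'_x$. Reducing a minimizer for $\lambda_2(x)$ modulo a shortest vector and applying Minkowski's second theorem in dimension two gives the bounds $h_x \le \lambda_2(x) \le (2/\sqrt{3})\, h_x$, so it suffices to prove $\lambda_2(y) \asymp h_x$.

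The key structural input is that, since $y \in H_x$ (equivalently $\widehat y - \widehat x \in H'_x$ by Lemma~\ref{lem:tangent:space}), the orthogonal projections of $\Lambda_y$ and $\Lambda_x$ onto the line $H'_x{}^{\perp}$ coincide as subgroups of $\R$. Indeed, for a unit vector $u \perp H'_x$ and any $(m,n) \in \Z^3$, one has
\[
u \cdot \pi_y(m,n) = u \cdot m - n\,(u \cdot \widehat y) = u \cdot m - n\,(u \cdot \widehat x) = u \cdot \pi_x(m,n),
\]
because $u \cdot(\widehat y - \widehat x) = 0$. Writing $h_y^x$ for the height of $\Lambda_y$ above $H'_x$, this gives $h_y^x = h_x$. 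The factorization $\vol(\Lambda_y) = \lambda_1^{\ast}(y) \cdot h_y^x$, where $\lambda_1^{\ast}(y) := \min\{\|v\|_e : v \in \Lambda_y \cap H'_x,\; v \ne 0\}$, then yields $\lambda_1^{\ast}(y) = |x|\,\lambda_1(x)/|y|$, which is at most $\lambda_1(x)$ by the hypothesis $|x| \le |y|$.

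I then split into two cases according to whether the shortest vector of $\Lambda_y$ lies in $H'_x$. If it does, then $H'_y = H'_x$, the height of $\Lambda_y$ above $H'_y$ equals $h_x$, and Minkowski applied to $\Lambda_y$ gives $\lambda_2(y) \asymp h_x$ directly. Otherwise the shortest vector $v_1(y)$ of $\Lambda_y$ has nonzero projection along $u$, so $\lambda_1(y) \ge h_y^x = h_x$; combining this with $\lambda_1(y) \le \lambda_1^{\ast}(y) \le \lambda_1(x) \le (2/\sqrt{3})\,h_x$ sandwiches both $\lambda_1(y)$ and $\lambda_1^{\ast}(y)$ in the interval $[h_x,(2/\sqrt{3})\,h_x]$. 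Since $v_1(y)$ and a shortest vector of $\Lambda_y \cap H'_x$ are linearly independent, this yields $\lambda_2(y) \le \max(\lambda_1(y),\lambda_1^{\ast}(y)) \le (2/\sqrt{3})\,h_x$, while $\lambda_2(y) \ge \lambda_1(y) \ge h_x$ supplies the matching lower bound. The delicate step is this second case, where $H'_y$ may genuinely differ from $H'_x$; the identity $h_y^x = h_x$ together with $|x| \le |y|$ is precisely what rules out an oblique shortest direction in $\Lambda_y$ that could distort $\lambda_2(y)$.
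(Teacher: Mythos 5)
Your proof is correct, and the central mechanism is the right one. Since $y\in H_x$ means $\widehat y-\widehat x\in H'_x$, the orthogonal projections of $\Lambda_y$ and $\Lambda_x$ onto $(H'_x)^{\perp}$ coincide, so the height of $\Lambda_y$ above $H'_x$ equals $h_x=1/(|x|\lambda_1(x))$, and the covolume factorization yields $\lambda_1^{\ast}(y)=\vol(\Lambda_y)/h_x=|x|\lambda_1(x)/|y|\le\lambda_1(x)$. The two-case split then correctly traps both $\lambda_2(x)$ and $\lambda_2(y)$ in $[h_x,(2/\sqrt{3})\,h_x]$, giving $\lambda_2(x)\asymp\lambda_2(y)$ with an explicit constant. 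One cosmetic slip: the bound $\lambda_2\le(2/\sqrt{3})\,h$ is Gauss reduction plus Pythagoras (reduce a $\lambda_2$-achiever modulo a shortest vector so its component along the shortest direction has length at most $\lambda_1/2\le\lambda_2/2$), not really Minkowski's second theorem, which would give only the weaker constant $4/\pi$; either of course serves the $\asymp$. The interesting point of comparison with the paper is that you never invoke the hypothesis $\|\widehat x-\widehat y\|_e\le4\lambda_1(x)/|x|$; only $|x|\le|y|$ and $y\in H_x$ enter. The paper itself gives no proof here and cites \cite{cheche}, where the lemma is the two-dimensional instance of a statement in arbitrary dimension $n$ with conclusion about $\lambda_n$. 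In that generality $\Lambda_x\cap H'_x$ and $\Lambda_y\cap H'_x$ have rank $n-1>1$, so equality of heights alone no longer pins down the minima of these sublattices and the proximity of $\widehat y$ to $\widehat x$ is naturally used. Your argument exploits the rank-one rigidity peculiar to dimension two, namely that a rank-one lattice is determined by its covolume, so it is a genuinely different route and, in this dimension, a slightly stronger one: it drops a stated hypothesis and produces an explicit $2/\sqrt{3}$ constant.
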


\subsection{The first minimum of $\Lambda_y$}

In one dimension, when $\widehat{x}_n=\frac{p_{n}}{q_n}$ and $\widehat{x}_{n+1}=\frac{p_{n+1}}{q_{n+1}}$ are two consecutive convergents of a real number, the unimodular property of the ordinary continued fraction algorithm implies the two equivalent properties:
\begin{itemize}
\item[(i)] $\pi_{x_{n}}(p_{n+1},q_{n+1})=p_{n+1}-q_{n+1}\widehat x_n$ is one of the two primitive elements
of the lattice $\Lambda_{x_n}$,
\item[(ii)]  $\pi_{x_{n+1}}(p_{n},q_{n})=p_{n}-q_{n}\widehat x_{n+1}$ is a shortest vector of
$\Lambda_{x_{n+1}}$.
\end{itemize}

In higher dimensions, lattices have infinitly many primitive elements. So, a priori, given   two consecutive best approximation vectors $x$ and $y\in Q$
there are infinitely many possible primitive elements $\alpha\in \Lambda_x$ that could be the projection $\alpha=\pi_x(y)$. Moreover 
property (i) no longer imply property (ii). Lemma \ref{lem:sc:sv} below give an additional condition which, together with (i), implies (ii).

Given $x\in Q$ and a primitive element $\alpha$ in $\Lambda_x$, we let 
  $$
  \Lambda_{\alpha^\bot}=\pi^\perp_\alpha(\Lambda_x),
  $$ where 
  $\pi^\perp_\alpha$ is the orthogonal projection of $\R^2$ onto  
  the subspace $\alpha^\perp$ of vectors of $\R^2$ orthogonal to $\alpha$.  

For any $y\in Q$ such that $\pi_x(y)=\alpha$, the $1$-volume of 
  $\Lambda_{\alpha^\bot}$ satisfies 
  $$\vol(\Lambda_{\alpha^\bot})=\frac{\vol(\Lambda_x)}{\|\alpha\|_e}
      =\frac{1}{\|\alpha\|_e\left\vert  x \right\vert}=\frac{1}{|x\wedge y|}.$$  
Here, the quantity $|y\wedge z|$ is the $2$-volume of the orthogonal projection 
  of $y\wedge z\in\Lambda^2\R^{3}$ onto the subspace spanned by 
  $e_1\wedge e_{3}$ and $e_2\wedge e_{3}$.  
Equivalently, (see \S2 of \cite{cheung1})  
  $$|y\wedge z|=\left\vert  y \right\vert\left\vert  z \right\vert d(\widehat y,\widehat z).$$

Denote the first minimum of $\Lambda_{\alpha^\bot}$ 
  by $\lambda_1(\alpha)$. The following lemma was proved in \cite{cheche}. 
  
\begin{lemma}\label{lem:sc:sv} 
  Let $x\in Q$ and $\alpha$ be a primitive element of $\Lambda_x$.  
  Suppose that $y$ is an element in $Q$ such that $\pi_x(y)=\alpha$.  Then 
    $\frac{|x\wedge y|}{\left\vert  y \right\vert}\le\lambda_1(\alpha)$ implies 
    $\lambda_1(y)=\frac{|x\wedge y|}{\left\vert  y \right\vert}=\|\pi_y(x)\|_e$.  
\end{lemma}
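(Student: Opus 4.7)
The plan is to separately verify the numerical identity $\|\pi_y(x)\|_e=|x\wedge y|/|y|$ and the first-minimum identity $\lambda_1(y)=\|\pi_y(x)\|_e$, with the hypothesis entering only at the very last step.

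First, I observe that $\pi_y(x)=p_x-q_x\widehat{y}$ has Euclidean norm $q_x\|\widehat{x}-\widehat{y}\|_e$, which combined with the identity $|x\wedge y|=|x|\,|y|\,d(\widehat{x},\widehat{y})$ recorded just before the lemma immediately gives $\|\pi_y(x)\|_e=|x\wedge y|/|y|$. The assumption that $\alpha=\pi_x(y)$ is a primitive element of $\Lambda_x$ forces $y\notin\Z x$ (since $\pi_x(kx)=0$ for any $k\in\Z$); as $x$ and $y$ are both primitive in $\Z^3$, this in turn gives $x\notin\R y$, so $\pi_y(x)$ is a nonzero element of $\Lambda_y$ and the easy inequality $\lambda_1(y)\le\|\pi_y(x)\|_e$ holds.

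For the reverse inequality, I take an arbitrary nonzero $v=\pi_y(z)\in\Lambda_y$ with $z\in\Z^3$ and show $\|v\|_e\ge\|\pi_y(x)\|_e$, distinguishing two cases. If $z\in\Z x+\Z y$, write $z=ax+by$; then $\pi_y(z)=a\,\pi_y(x)$ with $a\ne0$ (since $v\ne0$), so $\|v\|_e=|a|\,\|\pi_y(x)\|_e\ge\|\pi_y(x)\|_e$. Otherwise $\{x,y,z\}$ is $\R$-linearly independent in $\R^3$, so the integer $|\det(x,y,z)|$ is at least $1$. I would then use the determinant identity $\pi_y(u)\wedge\pi_y(v)=\det(y,u,v)/|y|$ in $\Lambda^2\R^2\cong\R$, which follows from a direct expansion of $\pi_y(u)=u_{12}-u_3\widehat{y}$, together with the elementary inequality $|\pi_y(z)\wedge\pi_y(x)|\le\|\pi_y(z)\|_e\|\pi_y(x)\|_e$ in $\R^2$, to obtain
$$\|\pi_y(z)\|_e\;\ge\;\frac{|\det(x,y,z)|/|y|}{\|\pi_y(x)\|_e}\;=\;\frac{|\det(x,y,z)|}{|x\wedge y|}\;\ge\;\frac{1}{|x\wedge y|}.$$

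It remains to invoke the hypothesis, and this is precisely where primitivity of $\alpha$ plays its role. Since $\ker\bigl(\pi^\perp_\alpha|_{\Lambda_x}\bigr)=\R\alpha\cap\Lambda_x=\Z\alpha$, the lattice $\Lambda_{\alpha^\bot}$ is of rank one with covolume $\vol(\Lambda_x)/\|\alpha\|_e=1/|x\wedge y|$; for a rank-one lattice the covolume coincides with the first minimum, so $\lambda_1(\alpha)=1/|x\wedge y|$. The assumption $|x\wedge y|/|y|\le\lambda_1(\alpha)$ therefore reads $1/|x\wedge y|\ge|x\wedge y|/|y|=\|\pi_y(x)\|_e$, which combined with the displayed bound finishes the second case. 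Combining both cases yields $\lambda_1(y)=\|\pi_y(x)\|_e$. The only delicate point is checking the wedge--determinant identity carefully with the non-orthogonal projection $\pi_y$; once this is in hand the argument is transparent and strikingly short, and it is noteworthy that the hypothesis is used only once, to bridge a pure lattice bound to the required comparison with $\|\pi_y(x)\|_e$.
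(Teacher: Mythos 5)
The paper itself gives no proof of this lemma (it cites \cite{cheche}), so I evaluate your argument on its own terms. Your overall strategy is sound and the computations check out: the identity $\|\pi_y(x)\|_e=|x\wedge y|/|y|$, the wedge--determinant identity $\pi_y(u)\wedge\pi_y(v)=\det(y,u,v)/|y|$ (obtained by expanding $\det(u,v,y)$ after the volume-preserving shear $(w_{12},w_3)\mapsto(\pi_y(w),w_3)$), the resulting bound $\|\pi_y(z)\|_e\ge 1/|x\wedge y|$ when $\det(x,y,z)\ne0$, and the observation that for the rank-one lattice $\Lambda_{\alpha^\perp}$ one has $\lambda_1(\alpha)=\vol\Lambda_{\alpha^\perp}=1/|x\wedge y|$, are all correct.

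There is, however, a genuine gap in the case split. You divide nonzero $v=\pi_y(z)$, $z\in\Z^3$, into ``$z\in\Z x+\Z y$'' and ``otherwise $\{x,y,z\}$ is $\R$-linearly independent.'' These two cases are not complementary in general: a priori there could exist $z\in(\R x+\R y)\cap\Z^3$ with $z\notin\Z x+\Z y$ (take for instance $x=(2,0,1)$, $y=(0,2,1)$, $z=(1,1,1)=\tfrac12 x+\tfrac12 y$), and such a $z$ falls into neither of your cases; if it occurred here it would actually produce a vector $\pi_y(z)=\tfrac12\pi_y(x)$ shorter than $\pi_y(x)$ and the lemma would fail. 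What rules this out is precisely the primitivity of $\alpha=\pi_x(y)$, which you invoke only to compute $\lambda_1(\alpha)$: since $x$ is primitive, $\pi_x$ restricted to $(\R x+\R y)\cap\Z^3$ has kernel $\Z x$ and image contained in $\R\alpha\cap\Lambda_x=\Z\alpha$; as $\pi_x(y)=\alpha$ generates $\Z\alpha$, it follows that $(\R x+\R y)\cap\Z^3=\Z x+\Z y$, equivalently that $\pi_y(x)$ is primitive in $\Lambda_y$. In the example above $\pi_x(y)=(-2,2)$ is not primitive in $\Lambda_x=\Z^2$, so the hypothesis correctly excludes it. With this one additional sentence inserted before the dichotomy, your proof is complete.
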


\section{Upper bound for the Hausdorff dimension \label{sec:upper-bound}}

Let $\mu'>\mu>0$ be two real numbers. We want to define a self-similar covering $(J,\sigma,B)$ of the set $\SSing^{\ast}(\mu')$.  Since the sequence of best approximation vectors $(x_n)_{n\in \N}$ of any $\theta \in \R^2 $ converges to $\theta$, it is natural to choose a self similar structure such that all the sequences of best approximations vectors of the $\theta \in  \SSing^{\ast}(\mu')$ are admissible. Moreover, according to Corollary \ref{cor:sing}, all the best approximation vectors of $\theta \in  \SSing^{\ast}(\mu')$ are in the set
$$Q_{\mu}=\{x\in Q:\lambda_{1}(x)\leq\left\vert x\right\vert
^{-\mu}\},$$
hence $J=Q_\mu$ is a natural choice.  The maps $\sigma$ and $B$ are more difficult to defined. Using the extension to higher dimensions, of Legendre's Theorem (Lemma \ref{lem:BAI:2}) it is tempting to defined the map $B$ with $B(x)=B\left(\widehat x,\frac{2\lambda_1(x)}{\left\vert  x \right\vert}\right)$.
However, by a result of Jarn\'\i k \cite{Jar54},   if the uniform exponent $\widehat{\omega}_{1,2}(\theta)$ is $\geq \mu$, then the standard exponent of approximation $ \omega_{1,2}(\theta)$ is larger than
\[
 \frac{\mu^2}{1-\mu}.
\]
Therefore using subsequences of  sequences of best approximation vectors, it should be possible 
to define the sets $B(x)$ with smaller diameters.
The precise definition  involves  the subspaces $H_x$ defined section \ref{S:hyperplane}. \medskip

\textbf{Notation. }
\newline$E(x)=\{y\in Q_{\mu}:\left\vert y\right\vert >\left\vert
x\right\vert ,\ y\notin H_{x},\ \left\Vert \pi_{y}(x)\right\Vert_e \leq\frac
{1}{\left\vert y\right\vert ^{\mu}},\ \pi_x(y)$ is primitive in $\Lambda_x\}$,
\newline%
$D(y)=\{z\in Q_{\mu}:\left\vert z\right\vert \geq\left\vert y\right\vert ,\ z\in
H_{y},$\ $\left\Vert \widehat{y}-\widehat{z}\right\Vert_e \leq4\frac{\lambda
_{1}(y)}{\left\vert y\right\vert }\}$.

\begin{definition}
	\label{def:sigma-upper}
We set $\sigma_{\mu}(x)=\cup_{y\in E(x)}D(y)$ and $B_{\mu, c} (x)=B(\widehat{x},\frac
{c}{(\lambda_{2}(x)^{\mu}\left\vert x\right\vert )^{\frac{1}{1-\mu}}})$.
\end{definition}

\begin{remark}
In \cite{cheche}, the roles of $D$ and $E$ were permuted and $\sigma(x)$ was defined as
\[
\sigma_{\mu}(x)=\cup_{y\in D(x)}E(y).
\]
\end{remark} 

\begin{remark}
When $\lambda_1(x)\leq \left\vert x \right\vert^{-\mu}$, using the second Minkowski Theorem, it is easy to see that the radius of the ball $B_{\mu,c}(x)$ is $\ll$ 
\[ 
 \left\vert x \right\vert^{-(1+\frac{\mu^2}{1-\mu})}
\]
which is precisely what is expected from the result of Jarn\'\i k quoted above.
\end{remark}

Theorem \ref{thm:upper} is a consequence of the following two lemmata.

\begin{lemma}
	\label{lem:upper-self}
When $c$ is large enough, 
$(Q_{\mu},\sigma_{\mu},B_{\mu, c})$ is a self-similar covering of
$\SSing^{\ast}(\mu^{\prime})$ for all $\mu^{\prime}>\mu$.
\end{lemma}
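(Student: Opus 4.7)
The plan is to construct, for each $\theta \in \SSing^{\ast}(\mu')$, a $\sigma_\mu$-admissible sequence $(x_{n_k})_{k\ge 0}$ in $Q_\mu$ extracted from the best-approximation sequence of $\theta$, and to verify that the associated balls $B_{\mu,c}(x_{n_k})$ shrink to $\{\theta\}$ once $c$ is taken large enough. Let $(x_n)_{n\ge 0}$ be the best simultaneous approximation vectors of $\theta$, with $x_n=(p_n,q_n)$ and $r_n=\|q_n\theta-p_n\|_e$. By Corollary~\ref{cor:sing}, $x_n\in Q_\mu$ for all $n$ sufficiently large. Since $\theta\in\mathcal{M}^\ast_{2,1}(\R)$, we have $\theta\notin \widehat{x}+H'_x$ for every $x\in Q$ (because $\widehat{x}+H'_x$ is a rational affine line while $1,\theta_1,\theta_2$ are $\Q$-linearly independent); combined with $\widehat{x}_m\to\theta$ and Lemma~\ref{lem:tangent:space}, this produces, for each $k$, arbitrarily large $m$ with $x_m\notin H_{x_{n_k}}$. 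I then define recursively $n_{k+1}:=\min\{m>n_k:x_m\notin H_{x_{n_k}}\}$ starting from a suitable $n_0$.

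To certify $\sigma_\mu$-admissibility I would, for each $k$, let $\alpha_k^\ast$ denote the primitive vector of $\Lambda_{x_{n_k}}$ proportional to $\pi_{x_{n_k}}(x_{n_{k+1}})$, and choose $y_k\in\Z^3$ to be a lift of $\alpha_k^\ast$ whose denominator $|y_k|$ is the smallest element of its residue class modulo $|x_{n_k}|$ exceeding $(|x_{n_k}|\lambda_2(x_{n_k}))^{1/(1-\mu)}$. The verification of $y_k\in E(x_{n_k})$ breaks into: $|y_k|>|x_{n_k}|$ for large $|x_{n_k}|$ via the Minkowski-type inequality $\lambda_2(x_{n_k})\gtrsim|x_{n_k}|^{\mu-1}$ coming from $\lambda_1\lambda_2\asymp|x|^{-1}$ and $\lambda_1\le|x|^{-\mu}$; $y_k\notin H_{x_{n_k}}$ and primitivity of $\pi_{x_{n_k}}(y_k)=\alpha_k^\ast$ by construction; and $\|\pi_{y_k}(x_{n_k})\|_e=|x_{n_k}|\lambda_2(x_{n_k})/|y_k|\le|y_k|^{-\mu}$ as the defining inequality for $|y_k|$. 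Because $\pi_{y_k}(x_{n_k})$ is thereby very short in $\Lambda_{y_k}$ and points in direction $\alpha_k^\ast$, it spans $H'_{y_k}$, so $\widehat{y}_k+H'_{y_k}$ is the line through $\widehat{x}_{n_k}$ in direction $\alpha_k^\ast$; this line contains $\widehat{x}_{n_{k+1}}$ since $\pi_{x_{n_k}}(x_{n_{k+1}})$ is an integer multiple of $\alpha_k^\ast$, giving $x_{n_{k+1}}\in H_{y_k}$. The remaining $D(y_k)$-conditions $|x_{n_{k+1}}|\ge|y_k|$ and $\|\widehat{y}_k-\widehat{x}_{n_{k+1}}\|_e\le 4\lambda_1(y_k)/|y_k|$ follow from a block-length estimate exploiting the positive distance $d_k>0$ of $\theta$ from $\widehat{x}_{n_k}+H'_{x_{n_k}}$ and Lemma~\ref{lem:coset:gap}.

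For the ball properties, the same Minkowski inequality $\lambda_2(x_{n_k})\gtrsim|x_{n_k}|^{\mu-1}$ makes the radius of $B_{\mu,c}(x_{n_k})$ at most a constant times $|x_{n_k}|^{-(\mu^2-\mu+1)/(1-\mu)}$, which tends to zero. For the containment $\theta\in B_{\mu,c}(x_{n_k})$, rewriting the radius via $\lambda_2\asymp 1/(|x|\lambda_1)$ as a constant times $c\,\lambda_1(x_{n_k})^{\mu/(1-\mu)}/|x_{n_k}|$, and bounding $\|\theta-\widehat{x}_{n_k}\|_e\le r_{n_k}/q_{n_k}$, reduces everything to the inequality $r_{n_k}\le c'\,\lambda_1(x_{n_k})^{\mu/(1-\mu)}$; this is obtained from the identification $\lambda_1(x_{n_k})\asymp r_{n_k-1}$ of Lemma~\ref{lem:BAI:3}(iii) combined with the singularity bound $r_{n_k-1}\le q_{n_k}^{-\mu'}$, provided $c$ is chosen large enough in terms of $\mu,\mu'$ and the Minkowski constants only.

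The main obstacle is the quantitative block-length estimate needed to conclude $|x_{n_{k+1}}|\ge|y_k|$ and the associated distance control: the qualitative fact that best approximations cannot persist for long in a proper affine subspace missing $\theta$ is clear, but extracting a bound compatible with the choice $|y_k|\approx(|x_{n_k}|\lambda_2(x_{n_k}))^{1/(1-\mu)}$ requires a careful combination of Lemma~\ref{lem:coset:gap} with the uniform singularity hypothesis. This is precisely the point of the two-step acceleration $\sigma_\mu(x)=\bigcup_{y\in E(x)}D(y)$ and of the carefully calibrated radius defining $B_{\mu,c}$.
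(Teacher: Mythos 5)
Your approach diverges substantially from the paper's and, as you yourself anticipate at the end, contains a genuine gap; but the gap is not an unavoidable feature of the statement — it is created by a design choice the paper does not make.

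The paper's proof selects the subsequence so that the bridging element $y\in E(x_{n_i})$ is itself a best approximation vector, namely $y=x_{n_i+1}$, the \emph{immediate} successor of $x_{n_i}$ and simultaneously the first element to leave $H_{x_{n_i}}$. The indices $n_i$ are chosen so that $x_{n_i+1}\notin H_{x_{n_i}}$, the block $x_{n_i+1},\ldots,x_{n_{i+1}}$ stays inside $H_{x_{n_i+1}}$, and $x_{n_{i+1}+1}$ leaves it. Because $x_{n_i}$ and $y$ are \emph{consecutive} best approximations, $\pi_{x_{n_i}}(y)$ is automatically primitive in $\Lambda_{x_{n_i}}$, and the singularity bound $\|\pi_y(x_{n_i})\|_e\le |y|^{-\mu}$ is Corollary~\ref{cor:sing}; the remaining $D(y)$-condition $\|\widehat{y}-\widehat{x}_{n_{i+1}}\|_e\le 4\lambda_1(y)/|y|$ is exactly Lemma~\ref{lem:BAI:3}(ii). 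No block-length estimate is needed: the ordering $|x_{n_{i+1}}|\ge|y|$ holds trivially since $n_{i+1}\ge n_i+1$.

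By contrast, you manufacture an artificial $y_k\in Q$, not among the best approximations of $\theta$, with $|y_k|$ tuned to be just above $(|x_{n_k}|\lambda_2(x_{n_k}))^{1/(1-\mu)}$. This creates two problems. First, the block-length inequality $|x_{n_{k+1}}|\ge|y_k|$ that you flag really can fail: your $n_{k+1}$ is the first exit time, and the crude bound one gets for it from Lemma~\ref{lem:BAI:3}(ii) is of the order $|x_{n_k}|\lambda_2(x_{n_k})/\lambda_1(x_{n_k})\asymp\lambda_1(x_{n_k})^{-2}$, which is strictly weaker than $(|x_{n_k}|\lambda_2(x_{n_k}))^{1/(1-\mu)}\asymp\lambda_1(x_{n_k})^{-1/(1-\mu)}$ precisely when $\mu>1/2$. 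Second, there is an unacknowledged error in the intermediate step: you write $\|\pi_{y_k}(x_{n_k})\|_e=|x_{n_k}|\lambda_2(x_{n_k})/|y_k|$, but in fact $\|\pi_{y_k}(x_{n_k})\|_e=|x_{n_k}|\,\|\alpha_k^\ast\|_e/|y_k|$, and $\alpha_k^\ast$ is the primitive lattice vector in the direction of $\pi_{x_{n_k}}(x_{n_{k+1}})$, which is only bounded \emph{below} by $\lambda_2(x_{n_k})$. So the $E(x_{n_k})$-membership $\|\pi_{y_k}(x_{n_k})\|_e\le|y_k|^{-\mu}$ is not secured by your choice of $|y_k|$; fixing it would force $|y_k|$ to be even larger, making the first problem worse. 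The remedy is the one the paper adopts: take the bridging $y$ from the best approximation sequence itself and let the block structure, not an artificial denominator, determine $|y|$.
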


\begin{proof}Let $\theta\in\SSing^{\ast}(\mu^{\prime})$ and
let $((p_{n},q_{n}))_{n \ge 0}$ be the sequence of best
approximations of
$\theta$. For $n \ge 0$, set $x_{n}=(p_{n},q_{n})$. 
By Corollary \ref{cor:sing} and removing the first best
approximation vectors if necessary, we
can suppose that $x_{n}\in Q_{\mu}$ for all $n$. Consider a subsequence
$(x_{n_{i}})_{i\geq0}$ such that for all $i\geq1$,
\[
x_{n_{i}+1}\notin H_{x_{n_{i}}},\ x_{n_{i}+1}, \ldots ,x_{n_{i+1}}\in
H_{x_{n_{i}+1}},\ x_{n_{i+1}+1}\notin H_{x_{n_{i}+1}}.
\]
Such a subsequence exists since the sequence $(x_{n})_{n \ge 0}$ must leave each subspace
$H_{x_{k}}$: otherwise the coordinates of the point $\theta$ together with $1$
would be rationally dependent. Observe that
\[
H_{x_{n_{i}+1}}=H_{x_{n_{i}+2}}= \ldots =H_{x_{n_{i+1}}}\neq H_{x_{n_{i+1}+1}}.
\]
Let $i$ be an integer. Set $x=(p,q)=x_{n_{i}}$, $y=(u,v)=x_{n_{i}+1}$ and
$z=x_{n_{i+1}}$. We have $y\notin H_{x}$ and, by Corollary \ref{cor:sing},
\[
\left\Vert q\widehat{y}-p\right\Vert_e \leq\frac
{1}{\left\vert y\right\vert ^{\mu}}. 
\]
Since $x$ and $y$ are consecutive best approximation vectors, $\pi_x(y)$ 
is primitive in $\Lambda_x$, hence $y\in E(x)$. Let $(e_{1},e_{2})$ 
be a reduced basis of $\Lambda_{x}$ and
$\alpha=\pi_{x}(y)$. Since $y\notin H_{x}$ we have $\alpha=ae_{1}+be_{2}$, 
where $b$ is a nonzero integer. We have
\[
\frac{\left\Vert \alpha\right\Vert_e \left\vert x\right\vert }{\left\vert
y\right\vert }=\frac{\left\vert x\wedge y\right\vert }{\left\vert y\right\vert
}=\left\Vert q\widehat{y}-p\right\Vert_e \leq \left\vert y\right\vert ^{-\mu},
\]
hence%
\[
\left\vert y\right\vert \geq(\left\Vert \alpha\right\Vert_e
\left\vert x\right\vert )^{\frac{1}{1-\mu}}%
\]
and
\[
\frac{\left\vert y\right\vert }{\left\vert x\right\vert }\geq(\left\Vert \alpha\right\Vert_e \left\vert x\right\vert ^{\mu})^{\frac
{1}{1-\mu}}.
\]
It follows that $y=\alpha+kx$, where the real number $k$ satisfies $\left\vert
k\right\vert \geq(\left\Vert \alpha\right\Vert_e \left\vert x\right\vert ^{\mu
})^{\frac{1}{1-\mu}}$. Moreover, %
\[
\widehat{y}=\widehat{x}+\frac{\alpha}{\left\vert y\right\vert }.
\]
Since $\left\Vert \alpha\right\Vert_e \gg\lambda_{2}(x)$, we get 
\begin{align*}
d(\widehat{x},\widehat{y}) &  \ll\frac{\left\Vert \alpha\right\Vert_e
}{(\left\Vert \alpha\right\Vert_e \left\vert x\right\vert )^{\frac{1}{1-\mu}}%
}=\frac{1}{(\left\Vert \alpha\right\Vert_e ^{\mu}\left\vert x\right\vert
)^{\frac{1}{1-\mu}}}\\
&  \ll\frac{1}{(\lambda_{2}(x)^{\mu}\left\vert x\right\vert )^{\frac{1}{1-\mu
}}}.
\end{align*}
Furthermore, $\theta\in B(\widehat{y},\frac{2\lambda_{1}(y)}{\left\vert y\right\vert
})$ and
\begin{align*}
\frac{\lambda_{1}(y)}{\left\vert y\right\vert } &  \ll\frac{1}{\left\vert
y\right\vert ^{1+\mu}}\leq\frac{1}{(\left\Vert \alpha\right\Vert_e \left\vert
x\right\vert )^{\frac{1+\mu}{1-\mu}}}\\
&  \ll\frac{1}{(\lambda_{2}(x)^{\mu}\left\vert x\right\vert )^{\frac{1}{1-\mu
}}}\times\frac{1}{(\lambda_{2}(x)\left\vert x\right\vert ^{\mu})^{\frac
{1}{1-\mu}}}. 
\end{align*}
Since $\mu\geq\frac{1}{2}$, we deduce from Minkowski's Theorem that
$$
\lambda_{2}(x)\left\vert
x\right\vert ^{\mu}\geq \lambda_{2}(x)\left\vert
x\right\vert ^{1-\mu} \gg 1,
$$
which implies that $\theta$ is in $B(\widehat{x},\frac
{c}{(\lambda_{2}(x)^{\mu}\left\vert x\right\vert )^{\frac{1}{1-\mu}}})$ when
$c$ is large enough. The last thing to check is that $z\in D(y)$, but this
follows from Lemma \ref{lem:BAI:3} (ii). 
\end{proof}

It appears that in some cases, it is better to use 
a larger radius for the balls $B_{c,\mu}$. This
observation has already been done in \cite{cheche}.
Since $\lambda_{2}(x)\gg\left\vert x\right\vert ^{\mu-1}$ for $x\in Q_{\mu}$,
a convex interpolation between the exponents of $\lambda_{2}(x)$ and $\left\vert x\right\vert ^{\mu-1}$ yields

\begin{corollary}
	\label{cor:upper-self}
For $\gamma\in [0,1]$ and $x\in Q_{\mu}$ set
\[
B_{\mu,\gamma}(x)=B(x)=B \Bigl( \widehat{x},\frac{c}{(\lambda_{2}(x)^{(1-\gamma)\mu}\left\vert
x\right\vert ^{(\mu-1)\mu \gamma+1})^{\frac{1}{1-\mu}}} \Bigr).
\]
When $c$ is large enough, 
$(Q_{\mu},\sigma_{\mu},B_{\mu,\gamma})$ is a self-similar covering of
$\SSing^{\ast}(\mu^{\prime})$ for all $\mu<\mu^{\prime}$.
\end{corollary}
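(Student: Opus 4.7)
The plan is to deduce the corollary directly from Lemma~\ref{lem:upper-self} by observing that enlarging the balls preserves conditions (i)--(iii) in Definition~\ref{def:self-sim} as long as the diameters still tend to zero along admissible sequences. The essential input is a lower bound on $\lambda_2(x)$ valid on $Q_\mu$, which lets us interpolate between two expressions for the radius.

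First, I would record the bound $\lambda_2(x)\gg |x|^{\mu-1}$ for $x\in Q_\mu$. This follows from Minkowski's second theorem, which gives $\lambda_1(x)\lambda_2(x)\asymp |x|^{-1}$, combined with the defining inequality $\lambda_1(x)\le |x|^{-\mu}$ of $Q_\mu$. Raising to the power $(1-\gamma)\mu\ge 0$ yields
\[
\lambda_2(x)^{(1-\gamma)\mu}\,|x|^{(\mu-1)\mu\gamma+1}\;\gg\;|x|^{(\mu-1)(1-\gamma)\mu+(\mu-1)\mu\gamma+1}\;=\;|x|^{\mu^2-\mu+1},
\]
and since $\gamma\mapsto\lambda_2(x)^{(1-\gamma)\mu}|x|^{(\mu-1)\mu\gamma+1}$ is monotone (its logarithmic derivative in $\gamma$ has sign $(\mu-1)\log|x|-\log\lambda_2(x)\le 0$), the radius of $B_{\mu,\gamma}(x)$ is nondecreasing in $\gamma$. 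In particular, for every $\gamma\in[0,1]$,
\[
B_{\mu,0}(x)\;\subset\;B_{\mu,\gamma}(x)\;\subset\;B\Bigl(\widehat{x},\tfrac{c}{|x|^{(\mu^2-\mu+1)/(1-\mu)}}\Bigr).
\]

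Next, I would fix $\theta\in\SSing^{\ast}(\mu')$ and take the subsequence $(x_{n_i})_{i\ge 0}$ of its best approximation vectors constructed in the proof of Lemma~\ref{lem:upper-self}. That lemma already provides, when $c$ is large enough, $(x_{n_i},x_{n_{i+1}})\in\sigma_\mu$ for all $i$, which is condition (i) and depends only on $\sigma_\mu$, not on the choice of the balls. Condition (iii), namely $\theta\in B_{\mu,\gamma}(x_{n_i})$ for every $i$, is immediate from the inclusion $B_{\mu,0}(x_{n_i})\subset B_{\mu,\gamma}(x_{n_i})$ and the corresponding assertion for $\gamma=0$.

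The only point that could fail when enlarging the balls is condition (ii), the vanishing of the diameters. Here the upper bound displayed above gives
\[
\diam B_{\mu,\gamma}(x_{n_i})\;\le\;\frac{2c}{|x_{n_i}|^{(\mu^2-\mu+1)/(1-\mu)}},
\]
and since $\mu^2-\mu+1>0$ for every $\mu\in(0,1)$ and $|x_{n_i}|\to\infty$, the right-hand side tends to zero. Together with $\theta\in\bigcap_i B_{\mu,\gamma}(x_{n_i})$, this forces $\bigcap_i B_{\mu,\gamma}(x_{n_i})=\{\theta\}$, completing the verification that $(Q_\mu,\sigma_\mu,B_{\mu,\gamma})$ is a self-similar covering of $\SSing^{\ast}(\mu')$. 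No step is truly delicate; the only thing to be careful about is the monotonicity of the radius in $\gamma$, which is the sole reason one still has the covering property once the balls are enlarged.
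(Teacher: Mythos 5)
Your approach is essentially the one the paper intends, namely: use $\lambda_2(x)\gg|x|^{\mu-1}$ (valid on $Q_\mu$ by Minkowski's theorem and $\lambda_1(x)\le|x|^{-\mu}$) to conclude that the radius of $B_{\mu,\gamma}(x)$ dominates, up to a bounded factor, the radius of $B_{\mu,0}(x)$, and therefore that conditions (i) and (iii) of a self-similar covering are inherited from Lemma~\ref{lem:upper-self}, while condition (ii) still holds because the $\gamma=1$ upper bound on the radius goes to zero. This is precisely what the paper means by ``a convex interpolation between the exponents of $\lambda_2(x)$ and $|x|^{\mu-1}$.''

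One small inaccuracy: the claimed pointwise monotonicity of $\gamma\mapsto\lambda_2(x)^{(1-\gamma)\mu}|x|^{(\mu-1)\mu\gamma+1}$ requires the exact inequality $\lambda_2(x)\ge|x|^{\mu-1}$, which Minkowski's second theorem does \emph{not} give; it only gives $\lambda_1(x)\lambda_2(x)\gg|x|^{-1}$ with an implicit constant strictly less than $1$ (e.g.\ $2/\pi$ for the Euclidean norm), so $\lambda_2(x)\ge c_0\,|x|^{\mu-1}$ with $c_0<1$ and the logarithmic derivative in $\gamma$ need not have a fixed sign. What is actually true, and all your argument needs, is the weaker statement
\[
\frac{\mathrm{radius\ of\ }B_{\mu,\gamma}(x)}{\mathrm{radius\ of\ }B_{\mu,0}(x)}
=\bigl(\lambda_2(x)\,|x|^{1-\mu}\bigr)^{\gamma\mu/(1-\mu)}
\ge c_0^{\mu/(1-\mu)}>0,
\]
uniformly in $\gamma\in[0,1]$ and $x\in Q_\mu$. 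Absorbing this fixed constant into the ``$c$ large enough'' hypothesis restores the inclusion $B_{\mu,0}(x)\subset B_{\mu,\gamma}(x)$ (with the appropriate choices of $c$ on each side), and the rest of your argument goes through unchanged.
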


\begin{lemma} 
\label{lem:sum-upper}
Let $a$\ and $b$ be real numbers with $b>2$ and
$\frac{b-1}{1-\mu}-a>2$. Then,  for $x\in Q_{\mu}$ with $\left\vert x\right\vert $
large enough, we get
\[
\sum_{z\in\sigma_{\mu}(x)}\frac{1}{\lambda_{2}(z)^{a}\left\vert z\right\vert ^{b}%
}\ll\frac{1}{\lambda_{2}(x)^{A}\left\vert x\right\vert ^{B}}, %
\]
where $A=\frac{b-1}{1-\mu}-a-2$ and $B=\mu\frac{b-1}{1-\mu}-a-1+b$.
\end{lemma}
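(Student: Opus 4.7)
The plan is to exploit the decomposition $\sigma_\mu(x) = \bigcup_{y \in E(x)} D(y)$ from Definition~\ref{def:sigma-upper} and split the sum as an iterated double sum
$$
\sum_{z \in \sigma_\mu(x)} \frac{1}{\lambda_2(z)^a |z|^b} \leq \sum_{y \in E(x)} \sum_{z \in D(y)} \frac{1}{\lambda_2(z)^a |z|^b},
$$
bounding the inner sum over $D(y)$ first and then the outer sum over $E(x)$. The structure of each layer is dictated by the definitions of $E$ and $D$: elements of $E(x)$ are parametrized by their projection $\alpha = \pi_x(y) \in \Lambda_x$, while elements of $D(y)$ are parametrized by their projection $\pi_y(z) \in \Lambda'_y$, together in each case with an integer ``fiber index'' since $\pi_x^{-1}(\alpha) \cap \Z^3$ (respectively $\pi_y^{-1}(\beta) \cap \Z^3$) is an arithmetic progression of step $x$ (respectively $y$) by primitivity.

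For the inner sum, I will first invoke Lemma~\ref{lem:coset:gap} to replace $\lambda_2(z)$ by $\lambda_2(y)$ up to a multiplicative constant, reducing matters to estimating $\sum_{z \in D(y)} |z|^{-b}$. The condition $z \in H_y$ places $\pi_y(z)$ into the one-dimensional lattice $\Lambda'_y$, whose minimum is $\asymp \lambda_1(y)$, while $\|\hat z - \hat y\|_e \le 4\lambda_1(y)/|y|$ translates into $\|\pi_y(z)\|_e = |z|\|\hat z-\hat y\|_e \leq 4\lambda_1(y)|z|/|y|$. Counting lattice points on this one-dimensional line together with the $\lesssim V/|y|$ choices of fiber index gives $\ll V^2/|y|^2$ admissible $z$ with $|z|$ in a dyadic shell $[V,2V)$ for $V \geq |y|$. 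Summing over dyadic $V$ and using $b > 2$ yields $\sum_{z \in D(y)} \lambda_2(z)^{-a}|z|^{-b} \ll \lambda_2(y)^{-a}|y|^{-b}$.

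For the outer sum, each $y \in E(x)$ corresponds to a primitive $\alpha = \pi_x(y) \in \Lambda_x$ with $\alpha \notin H'_x$, plus a fiber index. The identities $|x \wedge y| = \|\alpha\|_e|x| = |y|\|\pi_y(x)\|_e$ combined with $\|\pi_y(x)\|_e \leq |y|^{-\mu}$ force $|y| \geq V_0 := (\|\alpha\|_e|x|)^{1/(1-\mu)}$ and $\lambda_1(y) \leq \|\alpha\|_e|x|/|y|$; Minkowski's second theorem applied to $\Lambda_y$ then gives $\lambda_2(y) \gtrsim 1/(\|\alpha\|_e|x|)$. For each dyadic $V \geq V_0$ there are $\ll V/|x|$ choices of $y$ with $\pi_x(y) = \alpha$ and $|y| \in [V,2V)$, so summing over $V$ (which converges because $b > 2$) the contribution of a fixed $\alpha$ is $\ll |x|^{a-1-(b-1)/(1-\mu)} \|\alpha\|_e^{a-(b-1)/(1-\mu)}$.

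Finally I will sum over $\alpha$. In a reduced basis of $\Lambda_x$, any $\alpha \notin H'_x$ has a nonzero second coefficient, so $\|\alpha\|_e \gtrsim \lambda_2(x)$ and the number of primitive such $\alpha$ with $\|\alpha\|_e \in [R,2R)$ is $\ll R^2|x|$ (using $\lambda_1\lambda_2 \asymp 1/|x|$). The hypothesis $(b-1)/(1-\mu) - a > 2$ is precisely the condition that the dyadic exponent $a + 2 - (b-1)/(1-\mu) = -A$ is negative, so the geometric sum converges to $\ll |x|\lambda_2(x)^{-A}$. Multiplying by the prefactor $|x|^{a-1-(b-1)/(1-\mu)}$ and using the elementary identity $(b-1)/(1-\mu) - a = \mu(b-1)/(1-\mu) + b - a - 1 = B$ gives the claimed bound. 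The main obstacle is the careful bookkeeping of exponents and the sharp Minkowski lower bound for $\lambda_2(y)$ in terms of $\|\alpha\|_e$ and $|x|$: any loss here, or a failure to use the non-degeneracy $\alpha \notin H'_x$ when counting, would degrade the exponent $A$ of $\lambda_2(x)$.
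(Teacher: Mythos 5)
Your proposal follows the paper's own proof in all essentials: the same two-layer decomposition $\sigma_\mu(x)=\bigcup_{y\in E(x)}D(y)$, the same use of Lemma~\ref{lem:coset:gap} to equate $\lambda_2(z)$ with $\lambda_2(y)$ in the inner sum, the same lattice-point count giving $\asymp(V/|y|)^2$ elements of $D(y)$ in a dyadic shell (equivalently the paper's $\card D_k(y)\ll k$), the same reduction of the outer sum to a parametrization by primitive $\alpha=\pi_x(y)\in\Lambda_x\setminus H'_x$ with $\|\alpha\|_e\gtrsim\lambda_2(x)$, and the same final geometric sum over $\alpha$ (the paper invokes Lemma~2.4 of \cite{cheche} for this last step while you do the dyadic bookkeeping by hand, but in dimension two these are equivalent).

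One point to flag: you deduce only the one-sided bound $\lambda_2(y)\gtrsim 1/(\|\alpha\|_e|x|)$ from Minkowski's theorem, and then replace $\lambda_2(y)^{-a}$ by $(\|\alpha\|_e|x|)^a$. This substitution requires $a>0$; as stated the lemma permits $a\le 0$ (the hypotheses $b>2$ and $\frac{b-1}{1-\mu}-a>2$ do not force $a$ positive). The paper closes this by observing, via Lemma~\ref{lem:sc:sv}, that when $|x|$ is large the stronger two-sided estimate $\lambda_2(y)\asymp 1/(\|\alpha\|_e|x|)$ holds for $y\in E(x)$ with $\pi_x(y)=\alpha$, which makes the substitution valid for either sign of $a$. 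In the actual application of the lemma (proof of Theorem~\ref{thm:upper}, where $a=\mu t$ or $a=(1-\gamma)\mu t\ge 0$) your one-sided version suffices, so this is a minor gap of generality rather than a failure of the argument; you could close it exactly as the paper does.
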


\begin{proof}
\textbf{Step 1. }For $z\in D(y)$, we have $\lambda_{2}(z)\asymp
\lambda_{2}(y)$ because $z\in H_{y}$. It follows that
\begin{align*}
S_{1}(y)  &  =\sum_{z\in D(y)}\frac{1}{\lambda_{2}(z)^{a}\left\vert
z\right\vert ^{b}}\\
&  \asymp\sum_{z\in D(y)}\frac{1}{\lambda_{2}(y)^{a}\left\vert z\right\vert
^{b}}.
\end{align*}
For $z=(p,q) \in D(y)$, we have 
\[
\left\Vert \pi_{y}(z)\right\Vert_e=\left\Vert p-q\widehat{y}\right\Vert_e=
q\left\Vert \widehat{y}-\widehat{z}\right\Vert_e \leq 4q\frac{\lambda
_{1}(y)}{\left\vert y\right\vert },
\]
and since $\pi_y(z) \in \Lambda_y$, the number of elements in%
\[
D_{k}(y)=\{z\in D(y):k\left\vert y\right\vert \leq\left\vert z\right\vert
<(k+1)\left\vert y\right\vert \}
\]
is $\ll k$. It follows that
\begin{align*}
S_{1}(y)  &  \asymp\sum_{k\geq 1}\sum_{z\in D_{k}(y)}\frac{1}{\lambda
_{2}(y)^{a}\left\vert z\right\vert ^{b}}=\frac{1}{\lambda_{2}(y)^{a}\left\vert
y\right\vert ^{b}}\sum_{k\geq 1}\sum_{z\in D_{k}(y)} \Bigl( \frac{\left\vert
y\right\vert }{\left\vert z\right\vert } \Bigr)^{b}\\
&  \ll\frac{1}{\lambda_{2}(y)^{a}\left\vert y\right\vert ^{b}}\sum_{k\geq
1}\frac{1}{k^{b-1}}.
\end{align*}
Since $b>2$, we get 
\[
S_{1}(y)\ll\frac{1}{\lambda_{2}(y)^{a}\left\vert y\right\vert ^{b}}.
\]

\textbf{Step 2. }By the definition of $\sigma_{\mu}(x)$ and by step 1, we have%
\begin{align*}
S(x)  &  =\sum_{z\in\sigma_{\mu}(x)}\frac{1}{\lambda_{2}(z)^{a}\left\vert
z\right\vert ^{b}}=\sum_{y\in E(x)}\sum_{z\in D(y)}\frac{1}{\lambda_{2}%
(z)^{a}\left\vert z\right\vert ^{b}}\\
&  \ll\sum_{y\in E(x)}\frac{1}{\lambda_{2}(y)^{a}\left\vert y\right\vert ^{b}%
}\\
&  =\sum_{\substack{\alpha\in\Lambda_{x}\backslash H_{x}^{\prime}%
\\\alpha\text{ primitive}}}\sum_{y\in E(x)\ :\ \pi_{x}(y)=\alpha}\frac
{1}{\lambda_{2}(y)^{a}\left\vert y\right\vert ^{b}}.
\end{align*}
By the definition of $E(x)$, if $y\in E(x)$, then we have $\left\Vert \pi
_{y}(x)\right\Vert_e \leq\frac{2}{\left\vert y\right\vert ^{\mu}}$ and
$$
\left\Vert \alpha\right\Vert_e =\frac{\left\vert x\wedge y\right\vert
}{\left\vert x\right\vert }=\frac{\left\Vert p-q\widehat{y}\right\Vert_e
\left\vert y\right\vert }{\left\vert x\right\vert }=\frac{\left\Vert \pi
_{y}(x)\right\Vert_e \left\vert y\right\vert }{\left\vert x\right\vert }, 
$$
hence%
\[
\left\Vert \alpha\right\Vert_e \left\vert x\right\vert \leq2\left\vert
y\right\vert ^{1-\mu}%
\]
and
\[
\frac{\left\vert y\right\vert }{\left\vert x\right\vert }\geq(\frac{1}%
{2}\left\Vert \alpha\right\Vert_e \left\vert x\right\vert ^{\mu})^{\frac
{1}{1-\mu}}.
\]

Since $\frac{\left\vert x\right\vert \left\Vert \alpha\right\Vert_e
}{\left\vert y\right\vert }=\|\pi_y(x)\|_e\geq \lambda_1(y)$, 
we deduce from Minkowski's Theorem that 
\[
\lambda_{2}(y)\gg\frac{1}{\left\Vert \alpha\right\Vert_e \left\vert
x\right\vert }%
\]
holds for all $y\in E(x)$ such that $\pi_{x}(y)=\alpha$. Call $\lambda_{1}(\alpha)$
the first minimum of the orthogonal projection of $\Lambda_{x}$ on the line
orthogonal to $\alpha$. By Lemma \ref{lem:sc:sv}, if $\frac{\left\vert x\wedge
y\right\vert }{\left\vert y\right\vert }<\lambda_{1}(\alpha)$ then
$\lambda_{1}(y)=\frac{\left\vert x\wedge y\right\vert }{\left\vert
y\right\vert }=\frac{\left\vert x\right\vert \left\Vert \alpha\right\Vert_e
}{\left\vert y\right\vert }$, which implies that $\lambda_{2}(y)\asymp\frac
{1}{\left\Vert \alpha\right\Vert_e \left\vert x\right\vert }$. Now $\lambda
_{1}(\alpha)=\frac{1}{\left\Vert \alpha\right\Vert_e \left\vert x\right\vert }$
and $\mu>\frac{1}{2}$, hence, for $\left\vert x\right\vert $ large enough,
\begin{align*}
\left\vert y\right\vert  &  >(\frac{1}{2}\left\Vert \alpha\right\Vert_e
\left\vert x\right\vert )^{\frac{1}{1-\mu}}\Rightarrow\left\vert y\right\vert
>(\left\Vert \alpha\right\Vert_e \left\vert x\right\vert )^{2}\\
&  \Rightarrow\frac{\left\Vert \alpha\right\Vert_e \left\vert x\right\vert
}{\left\vert y\right\vert }<\frac{1}{\left\Vert \alpha\right\Vert_e \left\vert
x\right\vert }\\
&  \Rightarrow\frac{\left\vert x\wedge y\right\vert }{\left\vert y\right\vert
}<\lambda_{1}(\alpha).
\end{align*}

It follows that%
\begin{align*}
S  &  \ll\sum_{\alpha\in\Lambda_{x}\backslash H_{x}^{\prime}}\sum_{y\in
E(x):\pi_{x}(y)=\alpha}\frac{1}{(\frac{1}{\left\vert x\right\vert \left\Vert
\alpha\right\Vert_e })^{a}\left\vert y\right\vert ^{b}}\\
&  \asymp\sum_{\alpha\in\Lambda_{x}\backslash H_{x}^{\prime}}\sum_{k\geq
(\frac{1}{2}\left\Vert \alpha\right\Vert_e \left\vert x\right\vert ^{\mu
})^{\frac{1}{1-\mu}}}\frac{(\left\vert x\right\vert \left\Vert \alpha
\right\Vert_e )^{a}}{\left\vert x\right\vert ^{b}k^{b}}\\
&  \ll\sum_{\alpha\in\Lambda_{x}\backslash H_{x}^{\prime}}\frac{(\left\vert
x\right\vert \left\Vert \alpha\right\Vert_e )^{a}}{\left\vert x\right\vert
^{b}(\frac{1}{2}\left\Vert \alpha\right\Vert_e \left\vert x\right\vert ^{\mu
})^{\frac{b-1}{1-\mu}}}\\
&  \ll\sum_{\left\Vert \alpha\right\Vert_e \geq\lambda_{2}(x)}\frac
{1}{\left\vert x\right\vert ^{\mu\frac{b-1}{1-\mu}-a+b}\left\Vert
\alpha\right\Vert_e ^{\frac{b-1}{1-\mu}-a}}.
\end{align*}

Now $\frac{b-1}{1-\mu}-a>2$ if $s>\frac{3-2\mu}{1-\mu+\mu^{2}}(1-\mu)$.
Therefore, by Lemma 2.4 of \cite{cheche} about  sums over lattices,%
\begin{align*}
S  &  \ll\frac{1}{\left\vert x\right\vert ^{\mu\frac{b-1}{1-\mu}%
-a+b}\operatorname{vol}\Lambda_{x}\lambda_{2}(x)^{\frac{b-1}{1-\mu}-a-2}}\\
&  =\frac{1}{\lambda_{2}(x)^{A}\left\vert x\right\vert ^{B}}, %
\end{align*}
where $B=\mu\frac{b-1}{1-\mu}-a-1+b$ and $A=\frac{b-1}{1-\mu}-a-2$.
\end{proof}

\begin{proof}[Completion of proof of Theorem \ref{thm:upper}]

Let $\mu_0$ be in $(\frac12,1)$. \newline
\textbf{Case 1. }
Assume that $\mu_0>\frac{1}{\sqrt{2}}$. 
By Lemma \ref{lem:upper-self}, $(Q_{\mu},\sigma_{\mu},B_{\mu})$ 
is a self-similar covering of
$\Sing^{\ast}(\mu_0)$ for all  $\mu$ such that
$\frac{1}{\sqrt{2}}<\mu<\mu_0$.
Let $s>2(1-\mu)$.  Set $t=\frac
{s}{1-\mu}$, $a=\mu t$ and $b=t$. For $x\in Q_{\mu}$, set 
\[
S(x)=\sum_{z\in\sigma_{\mu}(x)}(\diam B(z))^{s}.
\]
With these notations, $(\diam B(x))^s=\frac{c^s}{\lambda_{2}%
(x)^{a}\left\vert x\right\vert ^{b}}$ for all $x\in\Q_{\mu}$, hence by  Lemma \ref{lem:sum-upper}, we have
\[
\frac{S(x)}{(\diam B(x))^{s}}\ll\frac{1}{\lambda_{2}%
(x)^{A-a}\left\vert x\right\vert ^{B-b}}.
\]

Straightforward calculations give
\begin{align*}
A-a  &  =\frac{b-1}{1-\mu}-2a-2\\
&  =\frac{1}{1-\mu}\left(  t(1-2\mu+2\mu^{2})+2\mu-3\right)  \allowbreak
\end{align*}
$\allowbreak$and
\begin{align*}
B-b  &  =\mu\frac{b-1}{1-\mu}-a-1\\
&  =\frac{t\mu^{2}-1}{1-\mu}. %
\end{align*}
By assumption $t>2$ and $\mu^{2}>\frac{1}{2}$, so $B-b$ is positive. If
$A-a<0$, then $S(x)\leq (\diam B(x))^{s}$ when $\vert x \vert$ is large enough. 
Otherwise we use that $\lambda_{2}(x)\gg\left\vert
x\right\vert ^{\mu-1}$ and we get
\[
\frac{1}{\lambda_{2}(x)^{A-a}\left\vert x\right\vert ^{B-b}}\ll\frac
{1}{\left\vert x\right\vert ^{C}}, %
\]
with
\begin{align*}
C  &  =(\mu-1)(A-a)+(B-b)\\
&  =\frac{2\mu-1}{1-\mu}\left(  t(1-\mu+\mu^{2})+\mu-2\right) \\
&  >\frac{2\mu-1}{1-\mu}(-\mu+2\mu^{2})>0.
\end{align*}
We conclude that  $S(x)\leq (\diam B(x))^{s}$ 
when $\vert x \vert$ is large enough. Therefore, by Theorem \ref{thm:SSCupper}, 
\[
\dim_H\Sing^{\ast}(\mu_0)\leq s
\]
and since this holds for all $s>2(1-\mu)$ and all $\frac{1}{\sqrt{2}}<\mu<\mu_0$, we obtain
\[
\dim_H\Sing^{\ast}(\mu_0)\leq 2(1-\mu_0).
\]
\textbf{Case 2. } Assume that $\mu_0<\frac{1}{\sqrt{2}}$.
We use Corollary \ref{cor:upper-self} instead of Lemma \ref{lem:upper-self} 
with $\mu<\mu_0$ and a suitable choice of $\gamma$.
Set $t=\frac{s}{1-\mu}$, $a=(1-\gamma)\mu t$ and $b=(1+\gamma(\mu-1)\mu)t$. The idea is to find
a value of $\gamma$ such that the constraints
\[
b>2,\ \frac{b-1}{1-\mu}-a>2,\ B-b>0
\]
are satisfy with $t$ minimal. This leads to the value $\gamma=\frac{1-2\mu^{2}}%
{\mu(1-\mu)(3-2\mu)}$. In fact with the value $t=\frac{3-2\mu}{1-\mu+\mu^{2}}$
we find $b=2$, $\frac{b-1}{1-\mu}-a=2$, and $B-b=0$. It follows that if
$t>\frac{3-2\mu}{1-\mu+\mu^{2}}$ the three strict inequalities hold. The last
thing to check is that with this value of $\gamma$ and $t>\frac{3-2\mu}{1-\mu
+\mu^{2}}$ we have $A-a\leq0$. Now, if $t=\frac{3-2\mu}{1-\mu+\mu^{2}}$ we have
$A-a=\frac{1}{\mu-1}\left(  2\mu-1\right)  <0$, hence $A-a<0$ for $t$ close to
$\frac{3-2\mu}{1-\mu+\mu^{2}}$ which implies that $S(x)\leq1$ for $\left\vert
x\right\vert $ large enough. 
\end{proof}

\section{Lower bounds for the Hausdorff dimension: tools}

\subsection{The counting/diameter function}

We will use Theorem \ref{thm:massdistribution} when all the diameters of the sets 
$B(z)$, $z\in\sigma(x)$, 
have the same order. In that case we can replace the sums $\sum_{z\in\sigma
_{F}(x)}(\diam B(z))^{s}$  in condition (iii) of Theorem \ref{thm:massdistribution} 
by an equivalent sum
\[
(\operatorname{diam}B(z))^{s}\times\operatorname{card}\{z\in\sigma(x):B(z)\cap
F\neq\emptyset\}.
\]
So we are reduced to bound 
$\operatorname{card}\{z\in\sigma(x):B(z)\cap
F\neq\emptyset\}$ from above with $(\operatorname{diam}F)^{s}$. This will be done when
the  $z\in\sigma(x)$ are on line segments through some points in almost lattice positions. 
The next lemma allows us to bound from above $\sum_{z\in\sigma_{F}(x)}%
\frac{(\diam B(z))^{s}}{(\diam F)^{s}}$ in such a situation.

\begin{definition}
Let $C_{0}\geq 1,\ H>0$ and $V>0$ be real numbers. A $C_{0}$-distorted $H\times
V$-tiling of a subset $\mathcal{B}$ in $\R^{2}$ is a finite collection
of subsets $\mathcal{R}_{i}$, $i\in I$, such that
\begin{itemize}
\item[1.] each
$\mathcal{R}_{i}$ is included in $\mathcal{B}$,
\item[2.] the intersection of $\mathcal{R}_{i}$
and $\mathcal{R}_{j}$ has measure zero for all $i\neq j$,
\item[3.] each $\mathcal{R}_{i}$ contains a rectangle of horizontal length
$\frac{1}{C_{0}}H$ and of vertical length $\frac{1}{C_{0}}V$,
\item[4.] each
$\mathcal{R}_{i}$ is contained in a rectangle of horizontal length $C_{0}H$
and of vertical length $C_{0}V$.
\end{itemize}
\end{definition}

\textbf{Assumptions of Lemma \ref{lem:counting}.} 
Let $C_{0}\geq1$ be a real number, let $R_{0}>R_{1}>R_{2}>R_{3}$ and $H,V$ be
real numbers such that%
\[
\frac{R_{0}}{C_{0}}\geq H,\quad V\geq\frac{R_{1}}{C_{0}}, 
\]
and let $\mathcal{E}$ be a finite subset of $\R^{2}$. Assume that
$(\mathcal{R}_{y})_{y\in\mathcal{E}}$ is a $C_{0}$-distorted $H\times V$
tiling of the ball $B(x,R_{0})$ such that each set $\mathcal{R}_{y}$ contains
the corresponding $y$ of $\mathcal{E}$. Furthermore assume that, 
for each $y\in \mathcal{E}$, the ball
$B(y,R_{1})$ contains a set of balls $B(z_{1},R_{3}), \ldots , B(z_{k_y},R_{3})$,
$k_y\leq\lfloor\frac{2R_{1}}{R_{2}}\rfloor$, which are disjoint and whose
centers $z_{i}$ are in a same line going through $y$, the distance between
consecutive centers being at least $R_{2}$. Call $\mathcal{D}_{y}$ the set of all
the $z_{i}$ and set 
\[
\mathcal{S=\cup}_{y\in\mathcal{E}}\mathcal{D}_{y}.
\]

\begin{lemma}
\label{lem:counting}Set $f(r)=\max_{a\in\R^{2}}\frac
{\card \mathcal{S}\cap B(a,r)}{r^{s}}$. 
\begin{itemize}
\item[1.] If $1 \leq s\leq 2$, then
\[
\max_{R_{3}\leq r\leq R_{0}}f(r)\leq 72C_{0}^{4}\max \Bigl\{\frac{1}{R_{3}^{s}}%
,\frac{R_{1}R_{0}^{2}}{VHR_{2}}\times\frac{1}{R_{0}^{s}} \Bigr\}.
\]
\item[2.] If $s<1$, then
\[
\max_{R_{3}\leq r\leq R_{0}}f(r)\leq 72C_{0}^{4}\max \Bigl\{\frac{1}{R_{3}^{s}}%
,\frac{R_{1}}{R_{2}R_{1}^{s}},\frac{R_{1}R_{0}^{2}}{VHR_{2}}\times\frac
{1}{R_{0}^{s}} \Bigr\}.
\]
\end{itemize}

\end{lemma}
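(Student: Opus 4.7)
My plan is to estimate $N(a, r) := \card(\mathcal{S} \cap B(a, r))$ for $r \in [R_3, R_0]$ by the product bound $N(a, r) \leq M(a, r) \cdot K(r)$, where $M(a, r) = \card\{y \in \mathcal{E} : \mathcal{D}_y \cap B(a, r) \neq \emptyset\}$ and $K(r)$ is an upper bound on the per-$y$ count $\card(\mathcal{D}_y \cap B(a, r))$. Since $\mathcal{D}_y$ consists of collinear points at spacing $\geq R_2$ with at most $k_y \leq \lfloor 2R_1/R_2 \rfloor$ elements, I take $K(r) \leq \min(2R_1/R_2,\, \lfloor 2r/R_2 \rfloor + 1)$. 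For $M(a, r)$, any contributing $y$ lies in $B(a, r + R_1)$ because $\mathcal{D}_y \subset B(y, R_1)$; the disjoint tiles $\mathcal{R}_y$ each contain a rectangle of area $\geq HV/C_0^2$ and are contained in an axis-aligned rectangle of size $C_0 H \times C_0 V$ meeting $y$. An area-packing argument therefore yields $M(a, r) \leq \pi C_0^2 (r + R_1 + C_0\sqrt{H^2+V^2})^2 /(HV)$, and the hypotheses $H \leq R_0/C_0$ and $V \geq R_1/C_0$ are precisely what is needed to absorb the boundary term $C_0\sqrt{H^2+V^2}$ into $R_0$ and $R_1$, at the cost of an extra factor of order $C_0^2$ that is responsible for the $C_0^4$ in the final bound.

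I then study $f(r) = N(a, r)/r^s$ on $[R_3, R_0]$ by splitting into two regimes. In the large-$r$ regime $r \geq R_1$, $K(r)$ saturates at $\sim R_1/R_2$ and $M(a, r) \lesssim C_0^2 r^2/(HV)$, so $f(r) \lesssim C_0^2 R_1 r^{2-s}/(HV R_2)$; since $s \leq 2$, this is nondecreasing in $r$ and hence maximized at $r = R_0$, producing the term $R_1 R_0^2/(HVR_2) \cdot R_0^{-s}$. In the small-$r$ regime $r \leq R_1$, I use $K(r) \leq 2r/R_2 + 1$ together with the finer bound $M(a, r) \lesssim C_0^2 (r + R_1 + C_0 H)(r + R_1 + C_0 V)/(HV)$, which exploits the separate horizontal and vertical extents of the tiles rather than the loose $C_0\sqrt{H^2+V^2}$ estimate. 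This produces a piecewise-polynomial expression in $r$ that, in Case~2 ($s < 1$), attains its relevant extrema at $r = R_1$ and $r = R_3$, producing the middle term $R_1^{1-s}/R_2$ and the left-endpoint term $1/R_3^s$ respectively; in Case~1 ($s \geq 1$), all small-$r$ contributions are dominated by the large-$r$ term.

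The main obstacle will be the careful bookkeeping of constants required to reach the stated $72C_0^4$. The key is to avoid the loose $C_0\sqrt{H^2+V^2}$ estimate (which would give $M \sim C_0^2 R_0^2/(HV)$ for small $r$ and overshoot the bound) and instead use the horizontal/vertical decomposition, whose separate factors simplify cleanly via $H \leq R_0/C_0$ and $V \geq R_1/C_0$. Once the three candidate bounds are established on their appropriate subregimes and all constants are tracked through the area-packing and per-$y$ estimates, taking the maximum yields the inequalities in Cases~1 and~2.
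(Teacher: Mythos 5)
Your overall strategy matches the paper's: bound $\card(\mathcal{S}\cap B(a,r))$ by a product of a per-$y$ count $K(r)\lesssim\min(R_1/R_2,\,r/R_2+1)$ with a tile-packing count $M(a,r)$, then track the monotonicity of $M(a,r)K(r)/r^s$ over subintervals of $[R_3,R_0]$. The gap is in your treatment of the large-$r$ regime. You assert $M(a,r)\lesssim C_0^2 r^2/(HV)$ for all $r\ge R_1$, but the area-packing argument you set up only gives $M(a,r)\lesssim C_0^2\bigl(r+R_1+C_0\sqrt{H^2+V^2}\bigr)^2/(HV)$, and the boundary term $C_0\sqrt{H^2+V^2}\approx C_0 H$ (for $H\ge V$) cannot be absorbed into $r$ when $R_1\le r\le C_0 H$: the hypothesis $V\ge R_1/C_0$ controls the vertical tile extent against $R_1$, but $H$ can be far larger than $r$. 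In that intermediate range the true upper bound is of order $C_0^4$ (for $R_1\le r\le C_0 V$, after WLOG $V\le H$) or $C_0^3\,r/V$ (for $C_0 V\le r\le C_0 H$), not $C_0^2 r^2/(HV)$; consequently the envelope $g(r)$ is not monotone increasing on $[R_1,R_0]$, so the claim that the maximum there sits at $R_0$ is unjustified, and for $s<1$ the local maximum at $r=R_1$ coming from this regime would be missed by your large-$r$ analysis (you happen to recover the $R_1$ term from the small-$r$ side, but that is a coincidence of the bookkeeping, not a proof that the large-$r$ piece stays below the three listed quantities).

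This is precisely why the paper splits $[R_1,R_0]$ into $[R_1,C_0V]$, $[C_0V,C_0H]$, $[C_0H,R_0]$ after normalizing $V\le H$, with different tile counts on each piece and a careful monotonicity check there. Notice that your own ``finer'' bound $M(a,r)\lesssim C_0^2(r+R_1+C_0H)(r+R_1+C_0V)/(HV)$ actually \emph{does} produce the correct growth on each of those three subintervals (giving $\asymp C_0^4$, $\asymp C_0^3 r/V$, $\asymp C_0^2 r^2/(HV)$ respectively once you use $r+R_1+C_0V\le 3\max(r,C_0V)$ and $r+R_1+C_0H\le 3\max(r,C_0H)$); the fix is therefore to use it throughout the large-$r$ regime instead of the loose $r^2$ estimate, and to track the resulting piecewise monotonicity, after which the candidate maxima at $R_3$, $R_1$, $R_0$ emerge as in the paper. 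One more small inaccuracy: ``in Case~1 ($s\ge1$) all small-$r$ contributions are dominated by the large-$r$ term'' is not correct as stated --- what happens for $s\ge1$ is that the $R_1^{1-s}/R_2$ term is absorbed into $1/R_3^s$ (since $R_1^{1-s}/R_2\le R_2^{-s}\le R_3^{-s}$), while the $1/R_3^s$ term itself remains and is not dominated by the $R_0$ term in general.
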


\begin{proof} We can assume that $V\leq H$.

Observe first that a $4C_{0}H\times 4C_{0}V$ rectangle can meet at most
$36C_{0}^{4}$ tiles $\mathcal{R}_{y}$ because the union of all these tiles is 
included in a $6C_{0}H\times 6C_{0}V$ rectangle and these tiles have an area at
least equal to $C_{0}^{-2}HV$. Next, if a ball $B(a,r)$ meets a ball $B(y,R_{1})$ with
$y\in\mathcal{E}$, then the ball $B(a,r+R_{1})$ meets the tile $\mathcal{R}%
_{y}$. Since a ball $B(a,r+R_{1})$ with $r\leq C_{0}V$ is included in a
$4C_{0}H\times 4C_{0}V$ rectangle, it follows that a ball $B(a,r)$ with $r\leq
R_{1}$ meets at most
$36C_{0}^{4}$
balls $B(y,R_{1})$, $y\in\mathcal{E}$.

\textbf{Case 1. }$R_{3}\leq r\leq R_{2}.$\newline Since, for a given $y$ in
$\mathcal{E}$, a ball $B(a,r)$ contains at most two points $z$ in
$\mathcal{D}_{y}$, by the above observation we have%
\[
f(r)\leq 72C_{0}^{4}\times r^{-s}=g(r), 
\]
which is a decreasing function of $r$.

\textbf{Case 2. }$R_{2}\leq r\leq R_{1}$.\newline Since, for a given $y$ in
$\mathcal{E}$, a ball $B(a,r)$ contains at most $\frac{2r}{R_{2}}$ points $z$
in $\mathcal{D}_{y}$, by the above observation we have%
\[
f(r)\leq\frac{36C_{0}^{4}}{r^{s}}\times\frac{2\times r}{R_{2}}=72C_{0}%
^{4}\times\frac{r^{1-s}}{R_{2}}=g(r), 
\]
\newline which is an increasing function of $r$ if $s\leq 1$, and a decreasing 
function otherwise.

\textbf{Case 3. }$R_{1}\leq r\leq C_{0}V$.\newline By the above observation we
have%
\[
f(r)\leq 36C_{0}^{4}\times\frac{2R_{1}}{R_{2}}\times r^{-s}=g(r), 
\]
\newline which is a decreasing function of $r$. 

\textbf{Case 4. }$C_{0}V\leq r\leq C_{0}H$.\newline We need first to refine
the above observation. A $2(r+R_{1})\times 2(r+R_{1})$ square is included in a
$4C_{0}H \times \frac{4r}{C_{0}V}C_{0}V$ rectangle and all the tiles meeting
this rectangle are included in a $(\frac{4r}{C_{0}V}+2)C_{0}V\times 6C_{0}H$
rectangle. It follows that the $2(r+R_{1})\times 2(r+R_{1})$ square meets at
most
\[
(\frac{6r}{C_{0}V}\times 6)C_{0}^{2}\frac{VH}{C_{0}^{-2}VH}=36C_{0}^{4}%
\times\frac{r}{C_{0}V}%
\]
tiles $\mathcal{R}_{y}$. Hence
\[
f(r)\leq 36C_{0}^{4}\times\frac{r}{C_{0}V}\times\frac{2R_{1}}{R_{2}}\times
r^{-s}= 72C_{0}^{3}\frac{R_{1}}{VR_{2}}r^{1-s}=g(r), 
\]
which is an increasing function of $r$ if $s\leq 1$, and a decreasing 
function otherwise.

\textbf{Case 5. }$C_{0}H\leq r\leq R_{0}$. \newline The number of tiles meets
by $2(r+R_{1})\times 2(r+R_{1})$ square is at most $\frac{36C_{0}^{2}r^{2}}%
{HV}$, hence%
\[
f(r)\leq\frac{\frac{36C_{0}^{2}r^{2}}{HV}\times\frac{2R_{1}}{R_{2}}}{r^{s}%
}= 72C_{0}^{2}\times\frac{R_{1}}{VHR_{2}}r^{2-s}=g(r)
\]
\newline which is a decreasing function of $r$. 

\textbf{Conclusion. }$\newline$If $s\geq 1$, then  $f(r)\leq g(r)\leq\max
(g(R_{3}),g(R_{0}))\leq 72C_{0}^{4}\max\{\frac{1}{R_{3}^{s}},\frac{R_{1}%
R_{0}^{2}}{VHR_{2}}\times\frac{1}{R_{0}^{s}}\}$. 
\newline If $s\leq 1$, the
maximum of $g$ might be reach in $r=R_{1}$. 
\end{proof}

The above lemma will be used with an $s$ chosen so that $\operatorname{card}%
\mathcal{S\times}R_{3}^{s}\geq R_{0}^{s}$. Thanks to Theorem
\ref{thm:massdistribution}, it gives a lower bound for the Hausdorff dimension
of the image of $\Omega_{x_{0}}$ when $s\geq 1$. If $s\leq 1$, it will be
necessary to check that
\[
1\gg\frac{g(R_{0})}{g(R_{1})}.\
\]

\subsection{A first step in the definition of the self-similar structure: 
definition of $\sigma$ and of $Q_{\sigma}$}

Let $\mu>\frac{1}{2}$ be fixed. We want to define 
a self-similar structure $(J,\sigma,B)$ that covers a subset of $\SSing^{\ast}(\mu)$. 
In this subsection we only define $J$ and $\sigma$.

We denote by $c_{1},c_{2}, \ldots$ some constants that will be chosen later. 
These constants might depend on $\mu$.
The constants involved in $\ll$,\ $\gg$, or in $\asymp$ depends only on  $\mu$ 
but not on $c_{1},c_{2}, \ldots$ \medskip

For each $x$ in $Q$ let $u_{1}=u_{1}(x),u_{2}=u_{2}(x)$ 
be a reduced basis of $\Lambda_{x}$
(by reduction, we mean the Gauss reduction). The vector subspace $H_{x}^{\prime}$ 
is spanned by
$u_{1}\mathbb{\ }$and $x$ (see the definition of $H_x$ in Section 3). 
\medskip

Let $E_{1}(x)$ be the set of $y=\alpha+kx\in Q$
with $\alpha=\alpha_{m}=\pi_{x}(y)=mu_{1}+u_{2}$ in the ``first level"
$u_{2}+H_{x}^{\prime}$, $\left\Vert mu_{1}\right\Vert_e \leq\lambda_{2}(x)$ and
$\left\vert y\right\vert \in(\left\Vert \alpha\right\Vert_e \left\vert
x\right\vert )^{\frac{1}{1-\mu}}[c_{0},2c_{0}]$ where $c_{0}=32^{\frac{1}{1-\mu}}$. 
This value of $c_0$ will be used in the proof of Proposition \ref{prop:SSS}.
\medskip

Fix $b$ a positive real number. Let $y$ be in $E_{1}(x)$. Let $D_{1}(y)$
be the set of $z$ in $Q$ such that $\left\vert z\right\vert \geq\left\vert
y\right\vert ,\ z\in H_{y}$, 
\begin{align*}
\frac{1}{2}\left\vert y\right\vert ^{b}\leq\frac{\left\vert z\right\vert
}{\left\vert y\right\vert }\leq\left\vert y\right\vert ^{b},
\\
\left\Vert \widehat{y}-\widehat{z}\right\Vert_e
\leq c_1 \frac{\lambda_{1}(y)}{\left\vert y\right\vert }, 
\end{align*}
where $c_1\leq \frac14$ is small enough 
and will be chosen after Lemma \ref{lem:distortedtiling}. 

For each $x$ in $Q$, set%
\[
\sigma(x)=\cup_{y\in E_{1}(x)}D_{1}(y)
\]
and
\[
J=Q_{\sigma}=\bigcup_{x\in Q,\,\left\vert  x \right\vert\geq c}\sigma(x), 
\]
where $c$ is a constant.
The remaining Propositions and Lemmas hold when $\left\vert  x \right\vert$ is
large enough, so the constant $c$ will be chosen in order all these results
hold.

\subsection{A few Calculations}

Let $x$ be in $Q_{\sigma}$, $y$ be in $E_{1}(x)$ and $z$ be in $D_{1}(y)$.
Since $\pi_{x}(y)=\alpha=mu_{1}+u_{2}$ with $\left\Vert mu_{1}\right\Vert_e
\leq\lambda_{2}(x)$, we have $\left\Vert \alpha\right\Vert_e \asymp\lambda
_{2}(x)$. So if we can evaluate $\lambda_{2}(x)$, we 
will then be able to estimate the height of $y$, the height of $z$ 
and also $\lambda_1(x)$. However, estimating $\lambda_2(x)$ 
is not possible directly and we have to estimate $\lambda_1(y)$ first.   

\subsubsection{Minima of $\Lambda_y$}

\begin{lemma}\label{lem:y}
Let $x$ be in $Q_{\sigma}$ and $y$ in $E_1(x)$. 
Then $\lambda_1(y)=\frac{|x\wedge y|}{\left\vert  x \right\vert}\asymp \left\vert  y \right\vert^{-\mu}$ 
and $\lambda_2(y)\asymp \left\vert  y \right\vert^{\mu-1}$ 
when $\left\vert  x \right\vert$ is large enough.
\end{lemma}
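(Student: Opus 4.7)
The plan is to exploit the definitions of $E_1(x)$ together with Lemma \ref{lem:sc:sv} (the ``small-cross-product implies shortest vector'' criterion) and the Minkowski relation $\lambda_1(y)\lambda_2(y)\asymp |y|^{-1}$. The key observation is that $E_1(x)$ is designed precisely so that $|y|$ is a specific power of $\|\alpha\|_e|x|$, which converts the projection formula into a power of $|y|$.

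First I would unpack $\|\alpha\|_e$. Since $u_1,u_2$ is a Gauss-reduced basis of $\Lambda_x$, the component of $u_2$ perpendicular to $u_1$ has norm $\asymp\lambda_2(x)$, and this is also the perpendicular component of $\alpha=mu_1+u_2$. Together with the upper bound $\|mu_1\|_e\le\lambda_2(x)$ from the definition of $E_1(x)$, this gives $\|\alpha\|_e\asymp\lambda_2(x)$. Combined with the identity $|x\wedge y|=\|\pi_x(y)\|_e\,|x|=\|\alpha\|_e\,|x|$ this computes the cross product in terms of $\lambda_2(x)$ and $|x|$.

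Next I would verify the hypothesis of Lemma \ref{lem:sc:sv}. Since $\alpha=mu_1+u_2$ is a unimodular combination of a basis, $\alpha$ is primitive in $\Lambda_x$. The quantity $\lambda_1(\alpha)=\vol(\Lambda_{\alpha^\perp})=(\|\alpha\|_e\,|x|)^{-1}$, while $|x\wedge y|/|y|=\|\alpha\|_e\,|x|/|y|$. Using $|y|\asymp(\|\alpha\|_e|x|)^{1/(1-\mu)}$ from the definition of $E_1(x)$, the inequality $|x\wedge y|/|y|\le\lambda_1(\alpha)$ reduces to $|y|\ge(\|\alpha\|_e|x|)^2$, which, because $\mu>\tfrac12$ gives $1/(1-\mu)>2$, will hold provided $\|\alpha\|_e|x|$ is large. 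The latter follows from $\|\alpha\|_e\asymp\lambda_2(x)$ and the second Minkowski theorem $\lambda_1(x)\lambda_2(x)\asymp|x|^{-1}$ (forcing $\lambda_2(x)|x|\gg |x|^{1/2}\to\infty$), so the hypothesis is satisfied for $|x|$ large enough.

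Lemma \ref{lem:sc:sv} then yields $\lambda_1(y)=|x\wedge y|/|y|=\|\alpha\|_e|x|/|y|$, and substituting $|y|\asymp(\|\alpha\|_e|x|)^{1/(1-\mu)}$ produces
\[
\lambda_1(y)\asymp(\|\alpha\|_e|x|)^{1-1/(1-\mu)}=(\|\alpha\|_e|x|)^{-\mu/(1-\mu)}\asymp |y|^{-\mu},
\]
which gives the first claim. For the second, apply Minkowski's second theorem to $\Lambda_y$: $\lambda_1(y)\lambda_2(y)\asymp \vol(\Lambda_y)=|y|^{-1}$, hence $\lambda_2(y)\asymp |y|^{\mu-1}$. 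The only delicate point that requires attention is verifying the Lemma~\ref{lem:sc:sv} hypothesis uniformly; this is exactly where the assumption $\mu>\tfrac12$ and the ``large enough $|x|$'' condition are consumed, and everything else is routine bookkeeping.
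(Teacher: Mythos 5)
Your proof is correct and follows the same route as the paper: verify the hypothesis of Lemma~\ref{lem:sc:sv} using $\lambda_1(\alpha)=(\|\alpha\|_e|x|)^{-1}$ and the defining constraint $|y|\asymp(\|\alpha\|_e|x|)^{1/(1-\mu)}$ of $E_1(x)$ (noting $\tfrac{1}{1-\mu}>2$ since $\mu>\tfrac12$), then substitute back and finish with Minkowski's second theorem. Your detour through $\|\alpha\|_e\asymp\lambda_2(x)$ to show $\|\alpha\|_e|x|\to\infty$ is slightly more explicit than the paper's terse ``note that $\|\alpha\|_e|x|>1$ when $|x|$ is large enough,'' but it is the same idea.
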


\begin{proof}
Let $\lambda_{1}(\alpha)$ denote 
the first minimum of the orthogonal projection of $\Lambda_{x}$ on the line
orthogonal to $\alpha$. 
By Lemma \ref{lem:sc:sv}, if $\frac{\left\vert x\wedge y\right\vert }{\left\vert
y\right\vert }\leq\lambda_{1}(\alpha)$,   
then $\lambda_{1}(y)=\frac{\left\vert
x\wedge y\right\vert }{\left\vert y\right\vert }=\left\Vert \alpha\right\Vert_e
\frac{\left\vert x\right\vert }{\left\vert y\right\vert }$. Now $\lambda
_{1}(\alpha)=\frac{\det\Lambda_{x}}{\left\Vert \alpha\right\Vert_e }=\frac
{1}{\left\Vert \alpha\right\Vert_e \left\vert x\right\vert }\geq\frac{\left\vert
x\wedge y\right\vert }{\left\vert y\right\vert }$ is equivalent to%
\[
\left\vert y\right\vert \geq(\left\Vert \alpha\right\Vert_e \left\vert
x\right\vert )^{2}%
\]
and, by definition of $E_{1}(x)$, we get $\left\vert y\right\vert \geq c_{0}%
(\left\Vert \alpha\right\Vert_e \left\vert x\right\vert )^{\frac{1}{1-\mu}}%
\geq(\left\Vert \alpha\right\Vert_e \left\vert x\right\vert )^{2}$ (note that $\left\Vert
\alpha\right\Vert_e \left\vert x\right\vert >1$ when $\left\vert x\right\vert
$ is large enough), therefore $\lambda_{1}(y)=\frac{\left\vert x\wedge
y\right\vert }{\left\vert y\right\vert }$ when $\left\vert x\right\vert $ is
large enough.

It follows that
\[
\lambda_{1}(y)  =\left\Vert \pi_{y}(x)\right\Vert_e =\left\Vert \alpha
\right\Vert_e \frac{\left\vert x\right\vert }{\left\vert y\right\vert }%
\asymp(\left\vert x\right\vert \left\Vert \alpha\right\Vert_e )^{1-\frac
{1}{1-\mu}}\asymp\left\vert y\right\vert ^{-\mu}
\]
and, by Minkowski's Theorem, 
\[
\lambda_{2}(y)   \asymp\frac{1}{\left\vert y\right\vert \lambda_{1}(y)}%
\asymp\left\vert y\right\vert ^{\mu-1}.
\]
\end{proof}

\subsubsection{Minima of $\Lambda_x$ and $\Lambda_z$}

\begin{lemma}\label{lem:xz}
Let $x$ be in $Q_{\sigma}$ and $z$ in $\sigma(x)$. 
We have $\lambda_1(z)\asymp\left\vert  z \right\vert^{-\frac{\mu+b}{1+b}}$ and 
$\lambda_2(z)\asymp\left\vert  z \right\vert^{\frac{\mu-1}{1+b}}$ 
when $\left\vert  x \right\vert$ is large enough. Consequently, 
$\lambda_1(x)\asymp\left\vert  x \right\vert^{-\frac{\mu+b}{1+b}}$ 
and $\lambda_2(x)\asymp\left\vert  x \right\vert^{\frac{\mu-1}{1+b}}$.
\end{lemma}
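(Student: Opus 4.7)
The plan is to chain together Lemma \ref{lem:y}, Lemma \ref{lem:coset:gap}, and Minkowski's second theorem, translating the estimates from the scale of $y$ to the scale of $z$ via the size relation $|z|\asymp|y|^{1+b}$ built into the definition of $D_1(y)$.

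By the definition of $\sigma(x)=\cup_{y\in E_1(x)}D_1(y)$, I write $z\in D_1(y)$ for some $y\in E_1(x)$. Lemma \ref{lem:y} immediately supplies
\[
\lambda_1(y)\asymp|y|^{-\mu},\qquad \lambda_2(y)\asymp|y|^{\mu-1}.
\]
From $\frac12|y|^b\leq|z|/|y|\leq|y|^b$ I get $|z|\asymp|y|^{1+b}$, hence $|y|\asymp|z|^{1/(1+b)}$. The definition of $D_1(y)$ gives $z\in H_y$ with $|z|\geq|y|$ and $\|\widehat y-\widehat z\|_e\leq c_1\lambda_1(y)/|y|\leq 4\lambda_1(y)/|y|$, since $c_1\leq 1/4$. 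This is exactly the hypothesis of Lemma \ref{lem:coset:gap} applied to the pair $(y,z)$, so
\[
\lambda_2(z)\asymp\lambda_2(y)\asymp|y|^{\mu-1}\asymp|z|^{(\mu-1)/(1+b)}.
\]
Since $\Lambda_z$ has covolume $1/|z|$, Minkowski's second theorem yields $\lambda_1(z)\lambda_2(z)\asymp 1/|z|$, so
\[
\lambda_1(z)\asymp\frac{1}{|z|\,\lambda_2(z)}\asymp|z|^{-1-(\mu-1)/(1+b)}=|z|^{-(\mu+b)/(1+b)}.
\]
This proves the first two estimates.

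For the statement about $x$, recall that $x\in Q_\sigma$ means $x\in\sigma(x_0)$ for some $x_0\in Q$. Applying the argument just carried out, but with $(x_0,y_0,x)$ in the roles of $(x,y,z)$, yields $\lambda_1(x)\asymp|x|^{-(\mu+b)/(1+b)}$ and $\lambda_2(x)\asymp|x|^{(\mu-1)/(1+b)}$, which is the asserted consequence.

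There is no genuine obstacle here: the only thing to watch is that the constant $c_1$ (fixed to be $\leq 1/4$ in the definition of $D_1$) indeed satisfies $c_1\leq 4$, so that Lemma \ref{lem:coset:gap} applies, and that $|x|$ must be large enough for Lemma \ref{lem:y} to yield the bound $\lambda_1(y)\asymp|y|^{-\mu}$ (since $|y|\geq c_0(\|\alpha\|_e|x|)^{1/(1-\mu)}$ grows with $|x|$, any constant threshold on $|y|$ is achieved by taking $|x|$ sufficiently large).
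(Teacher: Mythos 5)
Your proof is correct and follows the same route as the paper: use Lemma \ref{lem:y} for the minima of $\Lambda_y$, pass to $\Lambda_z$ via Lemma \ref{lem:coset:gap} (the hypotheses of which are exactly the defining conditions of $D_1(y)$, noting $c_1\le 1/4\le 4$), and recover $\lambda_1(z)$ from Minkowski. You also correctly spell out the ``consequently'' step — which the paper leaves implicit — by observing that $x\in Q_\sigma$ means $x\in\sigma(x_0)$ for some $x_0$, so the first part of the lemma applies to the pair $(x_0,x)$ in place of $(x,z)$.
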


\begin{proof}
Let $y$ be in $E_1(x)$ such that $z$ is in $D_1(y)$. 
By definition of $D_1(y)$, we get $\left\vert z\right\vert \asymp\left\vert y\right\vert ^{1+b}$.
By Lemma \ref{lem:coset:gap} 
and by the definition of $D_1(y)$, and then by Lemma \ref{lem:y}, we have
\begin{align*}
\lambda_{2}(z)  &  \asymp\lambda_{2}(y)\asymp\left\vert y\right\vert ^{\mu
-1}\asymp\left\vert z\right\vert ^{\frac{\mu-1}{1+b}}, \\
\lambda_{1}(z)  &  \asymp\left\vert z\right\vert ^{-1-\frac{\mu-1}{1+b}%
}=\left\vert z\right\vert ^{-\frac{\mu+b}{1+b}}.
\end{align*}
\end{proof}

\subsubsection{Distance from $\widehat{x}$ to $\widehat{y}$}

\begin{lemma}\label{lem:d(x,y)}
Let $x$ be in $Q_{\sigma}$ and $y$ in $E_1(x)$. Then,
when $\left\vert  x \right\vert$ is large enough, we have 
\[
d(\widehat{x},\widehat{y})\asymp\left\vert  x \right\vert^{r_0}, 
\]
where
\[
r_{0}=-\frac{  \mu^{2}%
-\mu+b+1}{\left(  1-\mu\right)  \left(  b+1\right)  }  .
\]
\end{lemma}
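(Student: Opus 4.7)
The plan is to express $d(\widehat{x},\widehat{y})$ in terms of $\|\alpha\|_e$ and $|y|$ using the definition of $\pi_x$, then to replace $\|\alpha\|_e$ and $|y|$ by powers of $|x|$ via the constraints defining $E_1(x)$ together with Lemma~\ref{lem:xz}.

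First, since $\pi_x(y) = p - q\widehat{x} = q(\widehat{y} - \widehat{x})$ for $y = (p,q)$, one has the identity
\[
d(\widehat{x},\widehat{y}) = \frac{\|\alpha\|_e}{|y|},
\]
where $\alpha = \pi_x(y) = m u_1 + u_2$. Because $\|mu_1\|_e \leq \lambda_2(x)$ and $\|u_2\|_e \asymp \lambda_2(x)$ (since $(u_1,u_2)$ is a Gauss-reduced basis of $\Lambda_x$), I will deduce that $\|\alpha\|_e \asymp \lambda_2(x)$.

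Second, the defining constraint $|y| \in (\|\alpha\|_e |x|)^{1/(1-\mu)}[c_0, 2c_0]$ for $E_1(x)$ gives immediately
\[
|y| \asymp (\|\alpha\|_e\,|x|)^{\frac{1}{1-\mu}} \asymp (\lambda_2(x)\,|x|)^{\frac{1}{1-\mu}}.
\]
Substituting into the identity above yields
\[
d(\widehat{x},\widehat{y}) \asymp \frac{\lambda_2(x)}{(\lambda_2(x)\,|x|)^{\frac{1}{1-\mu}}} = \lambda_2(x)^{-\frac{\mu}{1-\mu}}\,|x|^{-\frac{1}{1-\mu}}.
\]

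Third, I invoke Lemma~\ref{lem:xz}, which gives $\lambda_2(x) \asymp |x|^{\frac{\mu-1}{1+b}}$ (valid once $|x|$ is large enough). Plugging this in,
\[
d(\widehat{x},\widehat{y}) \asymp |x|^{-\frac{\mu}{1-\mu}\cdot\frac{\mu-1}{1+b}}\,|x|^{-\frac{1}{1-\mu}} = |x|^{\frac{\mu}{1+b} - \frac{1}{1-\mu}}.
\]
A direct simplification of the exponent gives
\[
\frac{\mu}{1+b} - \frac{1}{1-\mu} = \frac{\mu(1-\mu) - (1+b)}{(1-\mu)(1+b)} = -\frac{\mu^2 - \mu + b + 1}{(1-\mu)(1+b)} = r_0,
\]
which is the desired identity. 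There is no real obstacle here: the only point that requires a moment of care is checking that the definition of $E_1(x)$ does force $\|\alpha\|_e \asymp \lambda_2(x)$ (not merely $\leq 2\lambda_2(x)$), which follows from the fact that the $u_2$ component of $\alpha$ has norm $\asymp \lambda_2(x)$ and the $u_1$ component has norm at most $\lambda_2(x)$, so by orthogonality properties of a reduced basis in dimension two the norm of $\alpha$ is bounded below by a constant times $\lambda_2(x)$.
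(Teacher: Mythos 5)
Your proof is correct and follows essentially the same route as the paper: express $d(\widehat{x},\widehat{y})=\|\alpha\|_e/|y|$, use the constraint in $E_1(x)$ to get $|y|\asymp(\|\alpha\|_e|x|)^{1/(1-\mu)}$ with $\|\alpha\|_e\asymp\lambda_2(x)$, and then substitute the estimate $\lambda_2(x)\asymp|x|^{(\mu-1)/(1+b)}$ from Lemma~\ref{lem:xz}. The extra paragraph justifying $\|\alpha\|_e\asymp\lambda_2(x)$ via Gauss reduction is a detail the paper records just before the lemma rather than inside its proof, but the argument is the same.
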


\begin{proof}
Let $\alpha=\pi_x(y)$. Since $y=\alpha+\frac{\left\vert y\right\vert }{\left\vert x\right\vert }x$, we get
\[
d(\widehat{x},\widehat{y})=\frac{\left\Vert \alpha\right\Vert_e }{\left\vert
y\right\vert }\asymp\frac{\left\Vert \alpha\right\Vert_e }{(\left\vert
x\right\vert \left\Vert \alpha\right\Vert_e )^{\frac{1}{1-\mu}}}=\frac
{1}{(\left\vert x\right\vert \left\Vert \alpha\right\Vert_e ^{\mu})^{\frac
{1}{1-\mu}}}\asymp\frac{1}{(\left\vert x\right\vert \lambda_{2}^{\mu
}(x))^{\frac{1}{1-\mu}}}.
\]
Therefore, by Lemma \ref{lem:xz}, we have
\begin{align*}
d(\widehat{x},\widehat{y})  &  \asymp\frac{1}{(\left\vert
x\right\vert \left\vert  x \right\vert^{\frac{\mu-1}{1+b}\mu})^{\frac{1}{1-\mu}}}\\
&  \asymp\frac{1}{\left\vert x\right\vert ^{\frac{1}{1-\mu}(1+\frac{\mu
(\mu-1)}{1+b})}}=\left\vert x\right\vert ^{r_{0}}. %
\end{align*}
\end{proof}

\subsubsection{Growth rate of $\left\vert  y \right\vert$}

\begin{lemma}\label{lem:height}
Let $x$ be in $Q_{\sigma}$ and $y$ in $E_1(x)$. Then, 
when $\left\vert  x \right\vert$ is large enough, we get 
\[
\left\vert  y \right\vert=\left\vert  x \right\vert^{e_y},
\]
where
\[
e_y=\frac{\mu
+b}{(1-\mu)(1+b)}.
\]
\end{lemma}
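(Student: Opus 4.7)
The plan is to chain together the asymptotic estimates already established in Lemmas \ref{lem:y} and \ref{lem:xz} together with the defining properties of $E_1(x)$. Everything reduces to a single exponent computation.

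First, I unpack the definition of $E_1(x)$. By construction, $\alpha=\pi_x(y)=mu_1+u_2$ with $\|mu_1\|_e\le\lambda_2(x)$, and $(u_1,u_2)$ is a reduced basis, so $\|u_2\|_e\asymp\lambda_2(x)$; hence
\[
\|\alpha\|_e\asymp\lambda_2(x).
\]
Moreover, the definition of $E_1(x)$ stipulates that
\[
|y|\in(\|\alpha\|_e\,|x|)^{\frac{1}{1-\mu}}[c_0,2c_0],
\]
so that
\[
|y|\asymp(\|\alpha\|_e\,|x|)^{\frac{1}{1-\mu}}\asymp(\lambda_2(x)\,|x|)^{\frac{1}{1-\mu}}.
\]

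Second, I plug in the estimate $\lambda_2(x)\asymp|x|^{\frac{\mu-1}{1+b}}$ furnished by Lemma \ref{lem:xz} (which is valid whenever $x\in Q_\sigma$, i.e., whenever $\sigma(x)$ is non-empty, since the estimate on $\lambda_2(x)$ is obtained in the course of proving that lemma from the relation $\lambda_2(x)\asymp\lambda_2(y)\asymp|y|^{\mu-1}$ combined with $|y|\asymp|x|^{e_y}$ — note that circularity is avoided because Lemma \ref{lem:xz} derives the exponent for $\lambda_2(x)$ from the exponent for $\lambda_2(z)$ via $z\in H_y$, not from the present lemma). This yields
\[
|y|\asymp|x|^{\frac{1}{1-\mu}\bigl(1+\frac{\mu-1}{1+b}\bigr)}.
\]

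Finally, I simplify the exponent:
\[
\frac{1}{1-\mu}\Bigl(1+\frac{\mu-1}{1+b}\Bigr)=\frac{1}{1-\mu}\cdot\frac{(1+b)+(\mu-1)}{1+b}=\frac{\mu+b}{(1-\mu)(1+b)}=e_y,
\]
which proves $|y|\asymp|x|^{e_y}$, as claimed. There is no serious obstacle here; the only care needed is to verify that invoking Lemma \ref{lem:xz} for $\lambda_2(x)$ is legitimate under the assumption $x\in Q_\sigma$, which just requires that some $z\in\sigma(x)$ exists, an assumption implicit in the statement.
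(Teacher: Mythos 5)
Your proof is correct and follows essentially the same route as the paper's: use the defining condition $|y|\asymp(\|\alpha\|_e|x|)^{1/(1-\mu)}$ of $E_1(x)$ together with $\|\alpha\|_e\asymp\lambda_2(x)$ (noted in the paper just before Lemma~\ref{lem:y}), then substitute $\lambda_2(x)\asymp|x|^{(\mu-1)/(1+b)}$ from Lemma~\ref{lem:xz} and simplify. Your parenthetical remark about avoiding circularity, while not spelled out in the paper, correctly identifies that Lemma~\ref{lem:xz} derives the estimate on $\lambda_2(x)$ through $\lambda_2(z)\asymp\lambda_2(y)\asymp|y|^{\mu-1}$ via Lemma~\ref{lem:y}, independently of the present lemma.
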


\begin{proof}
By the definition of $E_1(x)$ and by Lemma \ref{lem:xz}, we have 
\[
\left |y \right|\asymp 
(\left\vert x\right\vert \lambda_{2}(x))^{\frac{1}{1-\mu}} 
\asymp (\left| x\right\vert \left\vert  x \right\vert^{\frac{\mu-1}{1+b}})^{\frac{1}{1-\mu}}.
\]
\end{proof}

\subsection{A nested self-similar structure}
We want to define a self-similar structure $(J,\sigma,B)$.
Since $J=Q_{\sigma}$ and $\sigma$ have already been defined, it remains 
only for us to define the map $B$.

\subsubsection{Definition of $B(x)$}

For each $x\in Q_{\sigma}$, set
\[
B(x)= B(\widehat{x},c_{2}\left\vert x\right\vert ^{r_{0}}), 
\]
where the constant $r_0$ is defined in Lemma \ref{lem:d(x,y)}. 
The constant $c_{2}$ will be chosen in the proof of Lemma \ref{lem:distortedtiling}.

\begin{lemma}\label{lem:B(x):best}
	For $x$ in $Q_{\sigma}$,%
	\[
	B(x)\subset B\Bigl( \widehat{x},\frac{\lambda_{1}(x)}{2\left\vert x\right\vert } \Bigr), 
	\]
	when $\left\vert x\right\vert $ large enough, and therefore $x$ is a best
	approximation vector of all $\theta$ in $B(x)$.
\end{lemma}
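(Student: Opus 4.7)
The plan is a direct exponent comparison: show that the radius $c_{2}|x|^{r_{0}}$ of $B(x)$ is bounded by $\lambda_{1}(x)/(2|x|)$ once $|x|$ is large, and then invoke Lemma~\ref{lem:BAI:2} for the ``therefore'' conclusion. There is really only one computation to do.

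First I would use Lemma~\ref{lem:xz} to rewrite the target radius. It gives $\lambda_{1}(x)\asymp|x|^{-\frac{\mu+b}{1+b}}$, so
\[
\frac{\lambda_{1}(x)}{2|x|}\asymp |x|^{-\frac{\mu+2b+1}{1+b}}.
\]
Since $B(x)=B(\widehat{x},c_{2}|x|^{r_{0}})$, the inclusion $B(x)\subset B(\widehat{x},\lambda_{1}(x)/(2|x|))$ will follow, for $|x|$ large enough to swamp the constants $c_{2}$ and the implicit constant in $\asymp$, as soon as we verify the strict exponent inequality
\[
r_{0}<-\frac{\mu+2b+1}{1+b}.
\]

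The comparison is a one-line algebraic manipulation. Bringing everything over the common denominator $(1-\mu)(1+b)$ and using $r_{0}=-\frac{\mu^{2}-\mu+b+1}{(1-\mu)(1+b)}$, one computes
\[
-r_{0}-\frac{\mu+2b+1}{1+b}=\frac{(\mu^{2}-\mu+b+1)-(1-\mu)(\mu+2b+1)}{(1-\mu)(1+b)}=\frac{(2\mu-1)(\mu+b)}{(1-\mu)(1+b)},
\]
and this is strictly positive because $\mu\in(\tfrac12,1)$ and $b>0$. Thus the exponent of $|x|$ in $c_{2}|x|^{r_{0}}$ is strictly smaller than that in $\lambda_{1}(x)/(2|x|)$, which yields $c_{2}|x|^{r_{0}}\le\lambda_{1}(x)/(2|x|)$ for $|x|$ large enough.

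For the second assertion, take $|x|$ also larger than $(\mu_{2}/\lambda_{1}(\Z^{2}))^{2}$ so that Lemma~\ref{lem:BAI:2} applies. Then the already established inclusion gives $B(x)\subset\bar B(\widehat{x},\lambda_{1}(x)/(2|x|))\subset\Delta(x)$, which by the very definition of $\Delta(x)$ means that $x$ is a best approximation vector of every $\theta\in B(x)$. The only conceptual point worth flagging is that the strict inequality $r_{0}<-(\mu+2b+1)/(1+b)$ is exactly the place where the standing assumption $\mu>\tfrac12$ is used, so there is no genuine obstacle; the lemma is routine given the preparatory estimates of Lemmas~\ref{lem:xz} and~\ref{lem:BAI:2}.
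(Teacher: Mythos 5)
Your proof is correct and follows essentially the same route as the paper: estimate $\lambda_{1}(x)$ via Lemma~\ref{lem:xz}, compare the resulting exponent with $r_{0}$, and invoke Lemma~\ref{lem:BAI:2}. As a small remark, the paper's displayed inequality writes $-\frac{\mu+b}{1+b}-1+r_{0}>0$ but then expands the middle expression as $-\frac{\mu+2b+1}{1+b}+\frac{\mu^{2}-\mu+b+1}{(1-\mu)(b+1)}$, which is $-\frac{\mu+2b+1}{1+b}-r_{0}$ rather than $+r_{0}$; your computation of $-r_{0}-\frac{\mu+2b+1}{1+b}=\frac{(2\mu-1)(\mu+b)}{(1-\mu)(1+b)}>0$ is the correct form, and your explicit simplification to $(2\mu-1)(\mu+b)$ is a bit more transparent than the paper's.
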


\begin{proof} By Lemma \ref{lem:xz}, 
$\lambda_1(x)\asymp \left\vert  x \right\vert^{-\frac{\mu+b}{1+b}}$ and 
\[
	-\frac{\mu+b}{1+b}-1+r_0=-\frac{\mu+2b+1}{1+b}+\frac{  \mu^{2}%
		-\mu+b+1}{\left(  1-\mu\right)  \left(  b+1\right)  }>0,
\]
Therefore $c_{2}\left\vert x\right\vert ^{r_{0}}\leq\frac{\lambda_{1}
(x)}{2\left\vert x\right\vert }$ when $\left\vert x\right\vert $ is large enough. 
By Lemma \ref{lem:BAI:2}, it follows that $x$ 
is a best approximation vector of all $\theta$ in $B(x)$.
\end{proof}

\subsubsection{Nestedness}

\begin{lemma}
\label{lem:nestedness}Let $x$ be in $Q_{\sigma}$. Then for all $z\in
\sigma(x)$, $B(z)\subset B(x)$ and $z\in Q_{\sigma}$ when $\left\vert
x\right\vert $ is large enough. Moreover, 
$$
B(z)\subset
B\Bigl( \widehat{y},\frac{\lambda_{1}(y)}{2\left\vert y\right\vert } \Bigr)
\subset B\Bigl(\widehat{y},\frac{2\lambda_{1}(y)}{\left\vert y\right\vert } \Bigr) \subset B(x), 
$$
where $y$ is the element of $E_{1}(x)$ such that $z\in D_{1}(y)$.
\end{lemma}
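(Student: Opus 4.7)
The plan is to verify each claim separately, relying on the power-law estimates established in Lemmas \ref{lem:y}, \ref{lem:d(x,y)}, and \ref{lem:height}, together with the triangle inequality applied to the explicit radii of $B(x)$ and $B(z)$.

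First, the membership $z\in Q_\sigma$ is immediate from the definition $Q_\sigma = \bigcup_{x\in Q,\,|x|\ge c}\sigma(x)$, since $z\in\sigma(x)$ and $|x|\ge c$ by hypothesis. So only the chain of inclusions requires work.

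For the innermost inclusion $B(z)\subset B(\widehat{y},\lambda_1(y)/(2|y|))$, I would invoke the triangle inequality: the definition of $D_1(y)$ gives $\|\widehat{y}-\widehat{z}\|_e\le c_1\lambda_1(y)/|y|$ with $c_1\le 1/4$, so it suffices to bound the radius $c_2|z|^{r_0}$ of $B(z)$ by $\lambda_1(y)/(4|y|)$ when $|x|$ is large. By Lemma \ref{lem:y}, $\lambda_1(y)/|y|\asymp|y|^{-1-\mu}$, and the definition of $D_1(y)$ forces $|z|\asymp|y|^{1+b}$, so $|z|^{r_0}\asymp|y|^{r_0(1+b)}$. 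The needed strict inequality $r_0(1+b)<-1-\mu$ reduces, after substituting the explicit value of $r_0$, to $2\mu^2+b>0$, which holds automatically. Hence for $|x|$ large the bound is satisfied regardless of the value of $c_2$.

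The middle inclusion is trivial. For the outermost inclusion $B(\widehat{y},2\lambda_1(y)/|y|)\subset B(x)=B(\widehat{x},c_2|x|^{r_0})$, the triangle inequality together with Lemma \ref{lem:d(x,y)} gives $\|\widehat{x}-\widehat{y}\|_e\le C_1|x|^{r_0}$ for some constant $C_1=C_1(\mu,b)$; I would therefore require $c_2\ge 2C_1$. It then remains to show $2\lambda_1(y)/|y|=o(|x|^{r_0})$, so that this term is absorbed for $|x|$ large. Using Lemma \ref{lem:height}, $|y|=|x|^{e_y}$, whence $\lambda_1(y)/|y|\asymp|x|^{-(1+\mu)e_y}$, and the required strict inequality $(1+\mu)e_y>-r_0$ reduces, after simplification, to $\mu(2+b)>1$, which holds whenever $\mu>1/2$ and $b>0$. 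Composing the three inclusions yields $B(z)\subset B(x)$.

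The main obstacle is purely the bookkeeping of exponents and the choice of $c_2$ large enough to dominate the implicit constant in Lemma \ref{lem:d(x,y)}. Since $c_2$ is only pinned down later in the proof of Lemma \ref{lem:distortedtiling}, the strategy is simply to record here the lower bound $c_2\ge 2C_1$ and trust that it is compatible with the subsequent choice. All the genuine geometric work has already been absorbed into Lemmas \ref{lem:y}, \ref{lem:d(x,y)}, and \ref{lem:height}.
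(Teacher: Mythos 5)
Your proof is correct and follows essentially the same route as the paper's: you split into the same three inclusions, use the triangle inequality at $\widehat{y}$ and at $\widehat{x}$, and compare exponents from Lemmas \ref{lem:y}, \ref{lem:d(x,y)} and \ref{lem:height} in exactly the manner of the published argument, including the observation that the inner inclusion is insensitive to $c_2$ while the outer one imposes a lower bound on $c_2$ consistent with its later fixing in Lemma \ref{lem:distortedtiling}. One small arithmetic slip: the reduction of $r_0(1+b)<-1-\mu$ gives $2\mu^2-\mu+b>0$, i.e.\ $\mu(2\mu-1)+b>0$ (as the paper records), not $2\mu^2+b>0$; the conclusion is unaffected since both are positive for $\mu>\tfrac12$ and $b>0$, but the correct form actually uses the hypothesis $\mu>\tfrac12$ rather than holding "automatically."
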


\begin{proof}
By Lemma \ref{lem:d(x,y)},
\[
d(\widehat{x},\widehat{y})\ll\left\vert x\right\vert ^{r_{0}}, 
\]
where $r_{0}$ $=-\frac{\mu^{2}-\mu+b+1}{\left(  1-\mu\right)  \left(  b+1\right)  }$. 
Moreover, by Lemmas \ref{lem:y} and \ref{lem:height}, $\frac{\lambda_{1}(y)}{\left\vert
y\right\vert }\asymp\left\vert x\right\vert ^{-\frac{(\mu+b)(1+\mu)}%
{(1-\mu)(1+b)}}$ and
\[
r_{0}+\frac{(\mu+b)(1+\mu)}{(1-\mu)(1+b)}=\frac{2\mu-1+b\mu}{(1-\mu)(1+b)}>0.
\]
Therefore%
\[
B(\widehat{y},\frac{2\lambda_{1}(y)}{\left\vert y\right\vert })\subset B(x), 
\]
for all $y\in E_{1}(x)$ when $\left\vert x\right\vert $ is large enough.

Now let $y$ be in $E_{1}(x)$ and $z$ be in $D_{1}(y)$. By definition we have
$d(\widehat{y},\widehat{z})\leq \frac14 \frac{\lambda_{1}(y)}{\left\vert
y\right\vert }$, so in order to prove that $B(z)\subset B(\widehat{y}%
,\frac{\lambda_{1}(y)}{2\left\vert y\right\vert })\subset B(x)$ it is enough
to prove that $c_{2}\left\vert z\right\vert ^{r_{0}}\leq \frac14 %
\frac{\lambda_{1}(y)}{\left\vert y\right\vert }$.

Since $\lambda_{1}(y)\asymp\left\vert y\right\vert ^{-\mu}$
(by Lemma \ref{lem:y}), we are reduced to
check that%
\[
c_{2}\left\vert y\right\vert ^{(1+b)r_{0}}\asymp c_{2}\left\vert z\right\vert
^{r_{0}}\leq\frac{1}{4\left\vert y\right\vert ^{1+\mu}}, 
\]
which holds when $\left\vert y\right\vert $ is large enough because
\[
-(1+b)r_{0}-(1+\mu)=\frac{1}{1-\mu}\left(  (2\mu-1)\mu+b\right)  >0.
\]
\end{proof}

\begin{proposition}
\label{prop:SSS}The self-similar structure $(Q_{\sigma},\sigma,B)$ is strictly
nested and covers a subset of $\SSing^{\ast}(\mu)$.
\end{proposition}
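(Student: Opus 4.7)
The plan is to verify the two claims of Proposition \ref{prop:SSS}---strict nestedness and containment of the covered set in $\SSing^{\ast}(\mu)$---by assembling the estimates already established in the preceding subsections.

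For strict nestedness, the inclusion $B(z)\subset B(x)$ for every $z\in\sigma(x)$ is exactly the content of Lemma \ref{lem:nestedness}. For the decay of diameters along an arbitrary $\sigma$-admissible sequence $(x_n)$, I would combine the identity $\diam B(x)=2c_2|x|^{r_0}$ (with $r_0<0$) with the height growth $|x_{n+1}|\gg|x_n|^{e_y(1+b)}$ coming from Lemma \ref{lem:height} together with the defining inequality $|z|\ge\tfrac12|y|^{1+b}$ in $D_1(y)$; since $e_y(1+b)=(\mu+b)/(1-\mu)>1$, heights grow super-exponentially and $\diam B(x_n)\to 0$.

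For containment in $\SSing^{\ast}(\mu)$, let $(x_n)_{n\in\N}$ be $\sigma$-admissible and let $\theta$ be the common point of $\bigcap_n B(x_n)$. Lemma \ref{lem:B(x):best} ensures that each $x_n$ is a best approximation vector of $\theta$, while Lemma \ref{lem:xz} yields $\lambda_1(x_n)\asymp|x_n|^{-(\mu+b)/(1+b)}$; since $(\mu+b)/(1+b)>\mu$, one can fix $\mu'\in(\mu,(\mu+b)/(1+b))$ so that $\lambda_1(x_n)\le|x_n|^{-\mu'}$ for all large $n$, and Corollary \ref{cor:sing} then delivers $\theta\in\SSing(\mu)$.

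The remaining task is non-degeneracy, that is, $\Q$-linear independence of $1,\theta_1,\theta_2$. I would apply Lemma \ref{lem:nestedness} a second time to observe that each intermediate $y_n\in E_1(x_n)$ with $x_{n+1}\in D_1(y_n)$ is also a best approximation of $\theta$. Suppose for contradiction that $a_0+a_1\theta_1+a_2\theta_2=0$ for some $(a_0,a_1,a_2)\in\Z^3\setminus\{0\}$; then for any best approximation $(p,q)$ the integer $a_0q+a_1p_1+a_2p_2$ is bounded by $(|a_1|+|a_2|)\|q\theta-p\|_e$ and therefore vanishes for $n$ large, so both $x_n$ and $y_n$ lie in the fixed rational $2$-plane $V\subset\R^3$ cut out by this relation. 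The condition $y_n\notin H_{x_n}$ forces $V=\mathrm{span}(x_n,y_n)\neq H_{x_n}$, and then $x_{n+1}\in H_{y_n}\cap V$ together with the inequality $|x_{n+1}|>|y_n|$ (which precludes $V\cap H_{y_n}=\R y_n$) pins down $V=H_{y_n}$ for every large $n$. The main obstacle is closing this step: since $H_{y_n}$ is determined by the direction of the shortest vector of $\Lambda_{y_n}$ and this direction genuinely varies with $y_n$ as the heights grow, the constancy $H_{y_n}=V$ along the admissible sequence is incompatible with the construction, producing the required contradiction.
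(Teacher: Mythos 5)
Your treatment of strict nestedness is fine (the paper even uses the simpler observation that $\left\vert z\right\vert >\left\vert x\right\vert$ for $z\in\sigma(x)$, so heights tend to infinity and $\diam B(x_n)=2c_2|x_n|^{r_0}\to 0$). However, the other two parts have genuine gaps.

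\textbf{Containment in $\SSing(\mu)$.} You invoke Corollary~\ref{cor:sing} after observing $\lambda_1(x_n)\asymp|x_n|^{-(\mu+b)/(1+b)}$ along the $\sigma$-admissible sequence. But Corollary~\ref{cor:sing} requires $\lambda_1\le|\cdot|^{-\mu'}$ for \emph{all} sufficiently large terms of the full sequence of best approximation vectors of $\theta$, not merely along the sparse admissible subsequence. Between $x_n$ and $x_{n+1}$ the heights jump polynomially (from $|x_n|$ to roughly $|x_n|^{(\mu+b)/(1-\mu)}$), so there are many intermediate best approximations about which you have said nothing. Worse, the best one can say about those intermediate terms is $\lambda_1\le C|\cdot|^{-\mu}$ with an explicit constant (this is exactly what the paper's Case~2 computation produces, using the structure of $H_y$ and $\mathcal H=H_y+\Lambda_y$), never $\le|\cdot|^{-\mu'}$ with $\mu'>\mu$, so Corollary~\ref{cor:sing} would only give $\theta\in\SSing(\mu-\eps)$, not $\theta\in\SSing(\mu)$. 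This is precisely why the paper bypasses the corollary and verifies the defining inequality $d(q\theta,\Z^2)\le Q^{-\mu}$ directly, via a case analysis on whether $Q$ falls in $[\,|x_n|,|y|\,)$ or $[\,|y|,|z|\,)$, and why the constant $c_0=32^{1/(1-\mu)}$ in the definition of $E_1(x)$ is tuned exactly so that the final bound has no spurious multiplicative constant.

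\textbf{Non-degeneracy.} Your contradiction argument correctly pushes through to $V=H_{y_n}$ for all large $n$, but then it simply asserts that this constancy ``is incompatible with the construction.'' That is not a proof, and you flag it yourself as an unresolved obstacle. The paper takes a different and shorter route: it shows $\Z\theta+\Z^2$ is dense in $\R^2$ (equivalently that $1,\theta_1,\theta_2$ are $\Q$-linearly independent) directly, by noting that $\theta$ is trapped within distance $e(\mathcal H)\le\lambda_2(y)$ of the grid $\mathcal H=H_y+\Lambda_y$ at each stage, and $\lambda_2(y)\to0$ for $y\in E_1(x_n)$ as $n\to\infty$, which forces density. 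You would need to either complete your argument about why $H_{y_n}$ cannot be constant (which involves understanding how $H_{y_{n+1}}$ relates to $H_{x_{n+1}}$, $H_{y_n}$ and the condition $y_{n+1}\notin H_{x_{n+1}}$), or switch to the paper's density argument.
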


\begin{proof} The nestedness is ensured by the previous Lemma and the fact
that $\lim_{n\rightarrow\infty}\diam B(x_{n})=0$ for all
admissible sequence $(x_{n})_{n\ge 0}$ is an immediate consequence of
the inequality $\left\vert z\right\vert >\left\vert x\right\vert $ for all
$z\in\sigma(x)$.

Let $(x_{n})_{n\ge 0}$ be an admissible sequence and let $\theta$ be
the unique point in $\cap_{n\geq0}B(x_{n})$. We have to show that $\theta
\in\SSing(\mu)$ and that $\Z\theta+\Z^{2}$ is
everywhere dense in $\R^{2}$. Let $Q$ be an integer. We want to prove
that there exists an integer $q\leq Q$ such that%
\[
d(q\theta,\Z^{2})\leq Q^{-\mu}.
\]
Let $n$ be the integer defined by $\left\vert x_{n}\right\vert \leq Q<\left\vert
x_{n+1}\right\vert $.\ Set $x=x_{n}$. By the definition of $\sigma$, there exists
$y\in E_{1}(x)$ such that $z=x_{n+1}\in D_{1}(y)$.

\textbf{Case 1:} $\left\vert x_{n}\right\vert \leq Q<\left\vert y\right\vert
$. \newline By the above lemma $\theta \in B(z)\subset B(\widehat{y},\frac{\lambda
_{1}(y)}{2\left\vert y\right\vert })$, hence $y$ is a best approximation
vector of $\theta$. By Lemma \ref{lem:BAI:3} (iii), for all $(p,q)$ in
$\Z^{2}$ with $0<q<\left\vert y\right\vert $, we have 
\[
\left\Vert p-q\theta\right\Vert_e \leq2\left\Vert p-q\widehat{y}\right\Vert_e.
\]
Now by the definition of $E_{1}(x_{n})$ we have $\left\vert y\right\vert \geq
c_{0}(\left\Vert \alpha\right\Vert_e \left\vert x_{n}\right\vert )^{\frac
{1}{1-\mu}}$ where $\alpha=\pi_{x}(y)$. This implies that
\[
\left\Vert \widehat{y}-\widehat{x}\right\Vert_e \leq\frac{\left\vert
y\right\vert ^{-\mu}}{c_{0}^{1-\mu}\left\vert x\right\vert }%
\]
and therefore, by Lemma \ref{lem:BAI:3} (iii), the constant $c_{0}$ can be chosen large
enough so that 
\[
\left\Vert p-q\theta\right\Vert_e \leq2\left\Vert p-q\widehat{y}\right\Vert_e
\leq\left\vert y\right\vert ^{-\mu}\leq Q^{-\mu}.
\]
where $x=(p,q)$.

\textbf{Case 2: }$\left\vert y\right\vert \leq Q<\left\vert z\right\vert
$.\newline Since $d(\widehat{z},\widehat{y})\leq \frac14 \frac{\lambda_{1}%
(y)}{\left\vert y\right\vert }\leq\frac{\lambda_{1}(y)}{2\left\vert
y\right\vert }$, by Lemma \ref{lem:BAI:2}, $y$ is a best approximation vector of $\widehat{z}$. Let
$y_{0}=y=(p_{0},q_{0}),\ y_{1}=(p_{1},q_{1}), \ldots ,y_{k}=(p_{k},q_{k})=z$ be all
the intermediate best approximation vectors  of $\widehat{z}$. Since $\Lambda_{z}=\pi
_{z}(\Z^{3})\subset H_{y}^{\prime}+\Lambda_{y}^{\prime}$ and since by Lemma \ref{lem:y}, $
\lambda_1(y)$ is small compared to $\lambda_2(y)$ when $\left\vert  x \right\vert$
is large enough, the intermediate best approximation vectors are all in
$H_{y}$ and $\Lambda_{y_{i}}\subset\mathcal{H=}H_{y}+\Lambda_{y}$ ,
$i=0, \ldots ,k$. It follows that for each $i<k$ we have%
\[
\lambda_{1}(y_{i})e(\mathcal{H)=}\frac{1}{\left\vert y_{i}\right\vert }, %
\]
where $e(\mathcal{H)}$ is the distance between two consecutive lines of
$\mathcal{H}$. Since $e(\mathcal{H)\geq}\frac{\sqrt{3}}{2}\lambda_{2}(y)$, and
$\lambda_{1}(y)\lambda_{2}(y)\geq\frac{1}{\left\vert y\right\vert }$ (the
minima are associated with an Euclidean norm),
\[
\lambda_{1}(y_{i})=\frac{1}{e(\mathcal{H})\left\vert y_{i}\right\vert }%
\leq\frac{2}{\sqrt{3}}\frac{\lambda_{1}(y)\left\vert y\right\vert }{\left\vert
y_{i}\right\vert }\leq 2\frac{1}{c_{0}^{1-\mu}}\left\vert y\right\vert ^{-\mu
}\frac{\left\vert y\right\vert }{\left\vert y_{i}\right\vert }\leq 2\frac
{1}{c_{0}^{1-\mu}}\left\vert y_{i}\right\vert ^{-\mu}.
\]
Hence, by Lemma \ref{lem:BAI:3} (i) and (iii), for $\left\vert y_{i-1}\right\vert
\leq Q<\left\vert y_{i}\right\vert $, $i=1, \ldots ,k$,%
\begin{align*}
d(\{\widehat{z}, \ldots ,Q\widehat{z}\},\Z^{2})  &  \leq 2d(\{\widehat
{y_{i}}, \ldots ,Q\widehat{y}_{i}\},\Z^{2})\\
&  \leq 2\left\Vert q_{i-1}\widehat{y_{i}}-p_{i-1}\right\Vert_e \leq 8\lambda
_{1}(y_{i})\\
&  \leq 16\frac{1}{c_{0}^{1-\mu}}\left\vert y_{i}\right\vert ^{-\mu}\leq
16\frac{1}{c_{0}^{1-\mu}}Q^{-\mu}.
\end{align*}
Now, by Lemma \ref{lem:B(x):best}, $z$ is a best approximation vector of $\theta$, hence%
\[
d(\{\theta, \ldots ,Q\theta\},\Z^{2})\leq 2d(\{\widehat{z}, \ldots ,Q\widehat
{z}\},\Z^{2})\leq 32\frac{1}{c_{0}^{1-\mu}}Q^{-\mu}, %
\]
which is equal to $Q^{-\mu}$ by the choice of $c_{0}$.

It remains to see why $\mathbb{Z\theta+Z}^{2}$ is everywhere dense. This
simply follows from the fact that $e(\mathcal{H)}\leq\lambda_{2}(y)$ and that
$\lambda_{2}(y)$ tends to $0$ for $y\in E_{1}(x_{n})$ when $n$ goes to
infinity. 
\end{proof}

\subsection{A distorted tiling associated with the set of $\widehat{y}$, for $y$ in $E_{1}(x)$.}

Let $x$ be in $\Q_{\sigma}$ and let $u_{1},\ u_{2}$ be the reduced basis of $\Lambda_{x}$. 
For each $y$ in $E_1(x)$ there are unique integers $m$ and $a$, and $0\leq r<1$ 
such that $y=mu_1+u_2+(a+r)x$. For given integers $m$ and $a$, 
consider the trapezoid $T(m,a)$
with extreme points
\[
\widehat{x}+\frac{mu_{1}+u_{2}}{a\left\vert x\right\vert },\ \widehat{x}%
+\frac{(m+1)u_{1}+u_{2}}{a\left\vert x\right\vert },\ \widehat{x}+\frac
{mu_{1}+u_{2}}{(a+1)\left\vert x\right\vert },\ \widehat{x}+\frac
{(m+1)u_{1}+u_{2}}{(a+1)\left\vert x\right\vert }.
\]
For $y=mu_1+u_2+(a+r)x$ in $E_1(x)$ we set
\[
\mathcal{R}_y=\mathcal{R}_{\widehat{y}}=T(m,a).
\]
Let $\mathcal E(x)$ denote the set of $\widehat{y}$, $y$ in $E_1(x)$.
\begin{lemma}
\label{lem:distortedtiling}
There exists a constant $C_0$ such that for all $x$ in $Q_{\sigma}$ 
with $\left\vert  x \right\vert$ large enough and $c_2$ large enough, 
the collection $\mathcal{R}_{\widehat{y}}$, $\widehat{y}$ in $\mathcal{E}(x)$, 
is a $C_0$-distorted $H\times V$-tiling of $B(x)$ with
\begin{align*}
H=&\left\vert x \right\vert^h=\left\vert x \right\vert^{-\frac{(2-\mu)(b+\mu)}{(1-\mu)(b+1)}}\\
V=&\left\vert x \right\vert^v=\left\vert x \right\vert^{-\frac{(1+\mu)(b+\mu)}{(1-\mu)(b+1)}}.
\end{align*}
Moreover, every $\widehat{y}$ in $\mathcal E(x)$ lies on 
a vertical side of $\mathcal R_y$ and the minimal distance $\rho(x)$ 
between two elements in $\mathcal E(x)$ 
is $\gg V\asymp  \frac{\lambda_1(y)}{\left\vert  y \right\vert}$.  
\end{lemma}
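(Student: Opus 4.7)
My plan is to verify the six assertions of the statement in turn---disjoint interiors, the scales $H$ and $V$, bounded distortion $C_0$, inclusion in $B(x)$, the position of $\widehat{y}$ on a vertical side, and the lower bound $\rho(x)\gg V$---with the first five being direct computations and the sixth being the heart of the matter. Writing the four corners of $T(m,a)$ as $\widehat{x}+(m'u_1+u_2)/(a'|x|)$ for $m'\in\{m,m+1\}$ and $a'\in\{a,a+1\}$, the edges $P_1P_2,P_3P_4$ have length $\|u_1\|/(a'|x|)\asymp\lambda_1(x)/|y|$, which equals $|x|^h=H$ by Lemmas~\ref{lem:xz} and \ref{lem:height}. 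The slanted edges $P_1P_3,P_2P_4$ have perpendicular (to $u_1$) component $\|u_2^\perp\|/(a(a+1)|x|)\asymp\lambda_2(x)/(a^2|x|)=|x|^v=V$, using $a|x|\asymp|y|$. Bounded distortion reduces to $(a+1)/a\to 1$ and to controlling the horizontal skew $|m|/(a+1)\leq M/a$ (with $M=\lambda_2(x)/\lambda_1(x)$); a direct exponent calculation gives $M/a\asymp|x|^{(1-2\mu)(\mu+b)/((1+b)(1-\mu))}\to 0$ for $\mu>1/2$. Disjointness of interiors follows from the nonvanishing Jacobian $1/(s^3|x|^3)$ of $(m,s)\mapsto\widehat{x}+(mu_1+u_2)/(s|x|)$. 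Each corner lies at distance $\asymp\lambda_2(x)/|y|\asymp|x|^{r_0}$ from $\widehat{x}$ by Lemma~\ref{lem:d(x,y)}, so taking $c_2$ larger than the implicit constant gives $\mathcal{R}_y\subset B(\widehat{x},c_2|x|^{r_0})=B(x)$. The side $P_1P_3=\{\widehat{x}+(mu_1+u_2)/((a+t)|x|):t\in[0,1]\}$ contains $\widehat{y}$ at $t=r$.

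The main obstacle is the bound $\rho(x)\gg V$. For distinct $\widehat{y},\widehat{y}'\in\mathcal{E}(x)$ corresponding to $y=(p,q)$, $y'=(p',q')$ with $\alpha=\pi_x(y)=mu_1+u_2$ and $\alpha'=m'u_1+u_2$, I would write
\[
\widehat{y}-\widehat{y}'=\frac{q'\alpha-q\alpha'}{qq'}=\frac{w}{qq'},\qquad w:=au_1+bu_2,\ a=q'm-qm',\ b=q'-q.
\]
The vector $w=q'p-qp'$ lies in $\Z^2\cap\Lambda_x$ and is nonzero, and one always has $\|w\|\geq|b|\,\|u_2^\perp\|\asymp|b|\lambda_2(x)$. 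In Case A ($m=m'$, equivalently $\alpha=\alpha'$), both $q$ and $q'$ lie in the same arithmetic progression $q\equiv-A(m)\pmod{|x|}$ determined by $\alpha$, so $|b|\geq|x|$ and $\|\widehat{y}-\widehat{y}'\|\gg|x|\lambda_2(x)/|y|^2\asymp V$ directly. In Case B ($m\neq m'$) the identity $a=q(m-m')+bm$ prevents $a$ from being chosen freely: if $|b|\leq q/(2M)$ then $|bm|\leq q/2$ forces $|a|\geq q/2$, and since $|b\gamma|\leq|b|\lambda_1(x)/2$ (reduced basis) is absorbed by $|a|\lambda_1(x)\geq q\lambda_1(x)/2$, we obtain $\|w\|\gg q\lambda_1(x)$; if instead $|b|>q/(2M)$, the perpendicular component alone gives $\|w\|\gg|b|\lambda_2(x)>q\lambda_1(x)/2$. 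In either sub-case $\|\widehat{y}-\widehat{y}'\|\gg\lambda_1(x)/|y|\asymp H$, which dominates $V$ because $H/V=|x|^{h-v}$ with $h-v=(2\mu-1)(\mu+b)/((1-\mu)(b+1))>0$.

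The delicate point in Case B is precisely that the bare inclusion $w\in\Z^2\setminus\{0\}$ gives only $\|w\|\geq 1$, hence $\|\widehat{y}-\widehat{y}'\|\geq 1/|y|^2$, which is strictly smaller than $V$ for $\mu>1/2$; the gain comes from the arithmetic rigidity linking $a$ to $b$ through $a=q(m-m')+bm$, which forces one of the two components $|a|\lambda_1(x)$ or $|b|\,\|u_2^\perp\|$ to be at least of order $q\lambda_1(x)$ and hence drives the desired estimate $\rho(x)\gg V$.
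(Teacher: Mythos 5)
Your proof is correct and follows the same overall route as the paper: decompose the trapezoid $T(m,a)$ by corners, identify the horizontal scale $H\asymp\lambda_1(x)/|y|$ and vertical scale $V\asymp\lambda_2(x)/(a^2|x|)$, reduce bounded distortion to $(a+1)/a\to1$ together with the vanishing skew ratio $M/a\asymp|x|^{(1-2\mu)(\mu+b)/((1-\mu)(1+b))}$, and place $\widehat y$ on the left vertical side. Where you add real value is the separation bound $\rho(x)\gg V$: the paper simply asserts that the nearest element of $\mathcal{E}(x)$ lies on the same vertical line at distance $\asymp V$, whereas you prove it arithmetically by writing $\widehat y-\widehat y'=w/(qq')$ with $w=q'p-qp'=Au_1+Bu_2\in\Z^2\cap\Lambda_x$, handling $m=m'$ via the arithmetic progression of denominators modulo $|x|$, and handling $m\neq m'$ via the identity $A=q(m-m')+Bm$ together with $|m|\le M$, which correctly isolates why the trivial lower bound $\|w\|\ge 1$ is insufficient and where the gain actually comes from.
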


\begin{proof}
Observe that the trapezoid $T(m,a)$ lies between the vertical lines
\[
V_m=\widehat{x}+\R(mu_1+ u_{2}),\text{ and } V_{m+1}=\widehat{x}%
+\R((m+1)u_1+u_{2})
\]
and between the horizontal lines 
\[
H_a=\widehat{x}+\frac{u_{2}}{a\left\vert x\right\vert }+\R u_{1},\text{ and }
H_{a+1}=\widehat{x}+\frac{u_{2}}{(a+1)\left\vert x\right\vert }+\R u_{1}. 
\]
Therefore, the trapezoids $\mathcal R_{\widehat{y}}$ have intersections
of Lebesgue measure zero.
Observe that for $y$ in $E_1(x)$, by Lemma \ref{lem:height},
$a\asymp \frac{\left\vert  y \right\vert}{\left\vert  x \right\vert}\asymp\left\vert x\right\vert ^{e_y-1}$. 
On the one hand, by definition of $E_1(x)$ and Lemma \ref{lem:y}, 
the distance between two consecutive horizontal lines is
\[
\asymp \frac{\lambda_{2}(x)}{a^{2}\left\vert x\right\vert} 
\asymp \frac{\left\vert x\right\vert\lambda_{2}(x)}{\left\vert y\right\vert^2}
\asymp \frac{\left\vert x\wedge y \right\vert}{\left\vert y\right\vert^2}
=\frac{\lambda_1(y)}{\left\vert  y \right\vert}
\asymp\left\vert y\right\vert ^{-1-\mu}
\asymp\left\vert x\right\vert ^{v}.
\]
 On the other hand, the distance between the two vertical segments of $T(m,a)$ is 
 $\asymp \frac{\lambda_1(x)}{\left\vert  y \right\vert}$ which is 
 $\asymp\left\vert  x \right\vert^h$ by Lemmas \ref{lem:height} and \ref{lem:xz}.
Since $h>v$, $\diam \mathcal R_{\widehat{y}}\asymp H$ and since $h<r_0$, we see that 
all the trapezoids $\mathcal R_{\widehat{y}}$ are included in $B(x)$ when $c_2$ is large enough.

Since $\widehat{y}=\widehat{x}+\frac{mu_{1}+u_{2}}{(a+r)\left\vert x\right\vert }$, 
$\widehat{y}$ is in the left vertical side of $\mathcal R_{\widehat{y}}$ and  
the nearest element of $\mathcal{E}(x)$ is in the same vertical line at a 
distance $\asymp V$. Therefore $\rho(x)\asymp \frac{\lambda_1(y)}{\left\vert  y \right\vert}$. 
\end{proof}

\subsubsection{Choice of the constants $c,\,c_1$ and $c_2$} 
The constant $c_2$ is chosen according to Lemma  \ref{lem:distortedtiling}. 
With this choice, we determine the constant $c_1$.
Since $\rho(x)\asymp \frac{\lambda_1(y)}{\left\vert  y \right\vert}$,  
it is possible to take $c_1$  small enough
 in order that for all $z$ in $D_1(y)$, 
 \[
 d(\widehat{y},\widehat{z})\leq  c_1 \frac{\lambda_{1}(y)}{\left\vert y\right\vert } \leq\frac14  \rho(x).
 \]

The choice of the constant $c$ involved in the definition of $Q_{\sigma}$ is done 
at the very end taking all the ``$\left\vert  x \right\vert$ large enough" into account.

\subsubsection{Distance between the points $\widehat{z}$ for $z$ in $D_{1}(y)$}

\begin{lemma}
\label{lem:d(z,z')} Let $x$ be in
$Q_{\sigma}$ and $y$ be in $E_{1}(x)$. If $z\neq z^{\prime}$ are in $D_{1}(y)$, 
then%
\[
d(\widehat{z},\widehat{z^{\prime}})\geq\frac{\lambda_{1}(y)}{2\left\vert
y\right\vert ^{1+2b}}\geq3\max_{u\in\sigma(x)}\operatorname{diam}B(u)
\]
when $\left\vert x\right\vert $ is large enough. 
Hence the balls $B(z)$, $z\in \sigma(x)$, are disjoint.
\end{lemma}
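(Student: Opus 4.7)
The plan is to express $d(\widehat z,\widehat{z'})$ explicitly in terms of a primitive generator $\alpha$ of $\Lambda'_y$, and then exploit a congruence forced by the primitivity of $y\in\Z^3$ to gain a factor $|y|$ over the elementary lower bound; the second inequality will then follow by a routine comparison of exponents.

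First I would work out $d(\widehat z,\widehat{z'})$ explicitly. Since $z,z'\in D_1(y)\subset H_y$, the projections $\pi_y(z),\pi_y(z')$ lie in $\Lambda'_y=\Z\alpha$, with $\|\alpha\|_e=\lambda_1(y)$; write $\pi_y(z)=k\alpha$ and $\pi_y(z')=k'\alpha$ for integers $k,k'$. A short computation using $\widehat z=\widehat y+k\alpha/|z|$ and $p_{z'}=|z'|\widehat y+k'\alpha$ yields
\[
\pi_z(z')=\frac{k'|z|-k|z'|}{|z|}\,\alpha=:\frac{m}{|z|}\,\alpha, \qquad d(\widehat z,\widehat{z'})=\frac{|m|\,\lambda_1(y)}{|z||z'|}.
\]
The crucial step is the divisibility $|y|\mid m$. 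To prove it, I would fix any representation $\alpha=v_0-n_0\widehat y$ with $(v_0,n_0)\in\Z^2\times\Z$; expanding $p_z=kv_0+(|z|-kn_0)p_y/|y|\in\Z^2$ and using the primitivity $\gcd(p_{y,1},p_{y,2},|y|)=1$ forces $|y|\mid(|z|-kn_0)$, and analogously for $z'$. Writing $|z|=kn_0+|y|j$ and $|z'|=k'n_0+|y|j'$ yields $m=|y|(k'j-kj')$. Since $m=0$ forces $\widehat z=\widehat{z'}$, hence $z=z'$ by primitivity in $\Z^3$, one has $|m|\geq|y|$ whenever $z\neq z'$. Together with $|z|,|z'|\leq|y|^{b+1}$ from the definition of $D_1(y)$,
\[
d(\widehat z,\widehat{z'})\geq\frac{|y|\,\lambda_1(y)}{|z||z'|}\geq\frac{\lambda_1(y)}{|y|^{1+2b}}\geq\frac{\lambda_1(y)}{2|y|^{1+2b}}.
\]

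For the second inequality, each $u\in\sigma(x)$ lies in some $D_1(y')$ with $y'\in E_1(x)$, and Lemma \ref{lem:height} gives $|y'|\asymp|y|\asymp|x|^{e_y}$, so $|u|\asymp|y|^{b+1}$ and thus $\max_{u\in\sigma(x)}\diam B(u)\asymp|y|^{(b+1)r_0}$ with $(b+1)r_0=-(\mu^2-\mu+b+1)/(1-\mu)$, while $\lambda_1(y)/(2|y|^{1+2b})\asymp|y|^{-\mu-1-2b}$. The desired inequality reduces to $-\mu-1-2b\geq-(\mu^2-\mu+b+1)/(1-\mu)$, which after clearing the positive factor $1-\mu$ becomes $(2\mu-1)(b+\mu)\geq0$, valid since $\mu>1/2$. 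All implicit constants, including the factor $3$, are absorbed once $|x|$ is large enough.

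The disjointness of the balls $B(z)$, $z\in\sigma(x)$, is then automatic: for $z,z'$ attached to the same $y\in E_1(x)$ it is immediate from the two inequalities just proved, and for $z,z'$ attached to distinct $y,y''\in E_1(x)$ it follows from Lemma \ref{lem:nestedness} together with the tile-separation $\rho(x)\gg\lambda_1(y)/|y|$ established in Lemma \ref{lem:distortedtiling}. The main technical obstacle is precisely the divisibility step $|y|\mid m$: the naive argument would only give $|m|\geq1$, losing a factor $|y|$ in the first inequality, which would make the second inequality fail since the exponent $-\mu-2-2b$ is strictly smaller than $(b+1)r_0$.
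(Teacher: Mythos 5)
Your proof is correct, and it reaches the same conclusion as the paper's proof by a genuinely different route. The paper parametrizes $z,z'$ directly in the $\Z$-basis $\{y,y'\}$ of $\Z^3\cap H_y$ (with $y'$ a lift of a generator $u_y$ of $\Lambda'_y$), writing $z=ay'+ky$ and $z'=a'y'+k'y$; then $\widehat z-\widehat{z'}=\frac{ak'-a'k}{(ar+k)(a'r+k')}\frac{u_y}{|y|}$ and since $|z|=(ar+k)|y|$, $|z'|=(a'r+k')|y|$, the factor $|y|$ drops out immediately from $d(\widehat z,\widehat{z'})=\frac{|ak'-a'k|\,|y|\,\lambda_1(y)}{|z||z'|}$, with $ak'-a'k\neq0$ since $(a,k)$ and $(a',k')$ are primitive in $\Z^2$ and distinct. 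You instead parametrize $\pi_y(z)=k\alpha$, $\pi_y(z')=k'\alpha$, obtain $m=k'|z|-k|z'|$ as the relevant integer, and recover the missing factor $|y|$ by a congruence argument from the primitivity of $y\in\Z^3$ (namely $\gcd(p_{y,1},p_{y,2},|y|)=1$ forces $|y|\mid(|z|-kn_0)$). The two derivations yield identical bounds, as one can check directly: in the paper's coordinates your $m$ equals $|y|(a'k-ak')$. The paper's route makes the factor $|y|$ visible structurally with no number theory beyond the primitivity of $(a,k)$ in $\Z^2$, while yours isolates exactly where primitivity of $y$ enters. Both are fine. Your closing comparison of exponents and the treatment of the $3\max\diam B(u)$ inequality are correct and match the paper's. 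One small point on disjointness: for $z,z'$ attached to distinct $y,y''\in E_1(x)$, the paper's argument is slightly more explicit than your summary — it uses the choice of $c_1$ (made after Lemma \ref{lem:distortedtiling}) to guarantee $d(\widehat y,\widehat z)\le\frac14\rho(x)$ for all $z\in D_1(y)$, together with $\max_{u\in\sigma(x)}\diam B(u)<\frac14\rho(x)$, rather than invoking $\rho(x)\gg\lambda_1(y)/|y|$ alone. Your outline captures the idea, but the dependence on $c_1$ is the ingredient that turns $\gg$ into a usable inequality with the needed constants.
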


\begin{proof}
Choose a generator $u_{y}$ of $\Lambda_{y}^{\prime}%
=\Lambda_{y}\cap H_{y}$ and $y^{\prime}$ in $\Z^{3}\cap H_{y}$ such
that $\pi_{y}(y^{\prime})=u_{y}$ and $\left\vert y^{\prime}\right\vert
\leq\frac{1}{2}\left\vert y\right\vert $. We have $y^{\prime}=u_{y}+ry$ with
$\left\vert r\right\vert \leq\frac{1}{2}$ and $\Z^{3}\cap
H_{y}=\Z y+\Z y^{\prime}$.

Let $z=ay^{\prime}+ky$ be in $Q\cap H_{y}$. We have $z=au_{y}+(ar+k)y$, hence%
\[
\widehat{z}=\frac{a}{(ar+k)}\frac{u_{y}}{\left\vert y\right\vert }+\widehat
{y},
\]
and, since $z$ is primitive, the pair $(a,k)$ is primitive in $\Z^{2}$. 

Now, if $z=ay^{\prime}+ky$ is in $D_{1}(y)$ then
\[
\left\vert a\right\vert \lambda_{1}(y)=\left\Vert au_{y}\right\Vert_e
=\left\Vert \pi_{y}(z)\right\Vert_e \leq \frac14 \frac{\lambda_{1}(y)}{\left\vert
y\right\vert }\times\left\vert z\right\vert,
\]
hence $\left\vert a\right\vert \leq\frac{1}{4}\frac{\left\vert z\right\vert
}{\left\vert y\right\vert }$. Moreover, $\left\vert z\right\vert
\leq\left\vert k\right\vert \left\vert y\right\vert +\frac{1}{2}\left\vert
a\right\vert \left\vert y\right\vert $, thus
\[
\left\vert k\right\vert \geq\frac{1}{\left\vert y\right\vert }(\left\vert
z\right\vert -\frac{1}{2}\left\vert a\right\vert \left\vert y\right\vert
)=\frac{\left\vert z\right\vert }{\left\vert y\right\vert }(1-\frac{1}{8}%
)\geq\frac{1}{2}\frac{\left\vert z\right\vert }{\left\vert y\right\vert }.
\]

Let $z=ay^{\prime}+ky$ and $z^{\prime}=a^{\prime}y^{\prime}+k^{\prime}y$ be
two distinct points in $D_{1}(y)$. We have
\begin{align*}
\widehat{z}-\widehat{z^{\prime}} &  =(\frac{a}{(ar+k)}-\frac{a^{\prime}%
}{(a^{\prime}r+k^{\prime})})\frac{u_{y}}{\left\vert y\right\vert }\\
&  =(\frac{ak^{\prime}-a^{\prime}k}{(ar+k)(a^{\prime}r+k^{\prime})}%
)\frac{u_{y}}{\left\vert y\right\vert }.
\end{align*}
Since $(a,k)$ and $(a^{\prime},k^{\prime})$ are primitive, we have $ak^{\prime
}-a^{\prime}k\neq 0$. It follows that%
\[
d(\widehat{z},\widehat{z^{\prime}})\geq\frac{\lambda_{1}(y)}{\left\vert
y\right\vert }\times\frac{1}{\frac{\left\vert z\right\vert }{\left\vert
y\right\vert }\times\frac{\left\vert z^{\prime}\right\vert }{\left\vert
y\right\vert }}\geq\frac{\lambda_{1}(y)}{\left\vert y\right\vert ^{1+2b}}.
\]

It remains to see that for all $z$,
\[
\diam B(z)\leq\frac{1}{3}\frac{\lambda_{1}(y)}{\left\vert
y\right\vert ^{1+2b}}\asymp\frac{1}{\left\vert y\right\vert ^{1+2b+\mu}}.
\]
Since $\diam B(z)\asymp\left\vert z\right\vert ^{r_{0}}%
\asymp\left\vert y\right\vert ^{(1+b)r_{0}}$ and since%
\[
-(1+b)r_{0}-(1+2b+\mu)=\frac{(b+\mu)(2\mu-1)}{1-\mu}>0
\]
we have $\diam B(z)\leq\frac{1}{3}\frac{\lambda_{1}(y)}{\left\vert
y\right\vert ^{1+2b}}$.

 The last thing to see is that the balls $B(z)$, $z\in \sigma(x)$ are disjoint. 
 Recall that the constant $c_1$ has been chosen in order that for all $z$ in $D_1(y)$,
 $d(\widehat{y},\widehat{z})\leq\frac14  \rho(x)$, the minimal distance between two points 
 $\widehat{y}$, where $y$ is in $E_1$. It follows that the balls $B(z)$ are disjoint provided that
\[
\max_{z \in \sigma(x)} \diam B(z)< \frac14 \rho(x). 
\] 
This latter inequality holds because 
$\rho(x)\asymp \frac{\lambda_1(y)}{\left\vert  y \right\vert}$ and $\frac{\lambda_{1}(y)}{\left\vert
y\right\vert ^{1+2b}}\geq \diam B(z)$.
\end{proof}

\subsubsection{Number of points in $\sigma(x)$}

\begin{lemma}\label{lem:card:sigma}
Let $x$ be in $Q_{\sigma}$ and $y$ in $E_1(x)$. Then 
\begin{align*}
\card D_{1}(y)&\asymp\left\vert x\right\vert ^{2be_y}=\left\vert x\right\vert ^{d_1},\\
\card E_{1}(x)&\asymp\left\vert x\right\vert
^{2\frac{\mu-1}{1+b}+e_{y}}=\left\vert x\right\vert ^{e_1},
\end{align*}
and
\[
\card \sigma(x)\asymp\left\vert  x \right\vert^{n_x},
\]
where
\[
n_x=\frac{1}{\left(  1-\mu\right)  \left(  b+1\right)  }\left(  2b^{2}%
+2b\mu+b+\left(  2\mu-1\right)  \left(  2-\mu\right)  \right)
\]
when $\left\vert  x \right\vert$ is large enough.
\end{lemma}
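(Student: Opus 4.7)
\medskip
\noindent\textbf{Plan.} The strategy is to count $\card D_1(y)$ for $y\in E_1(x)$ and $\card E_1(x)$ separately, verify that the sets $D_1(y)$, $y\in E_1(x)$, are pairwise disjoint, and then convert every factor into a power of $|x|$ via Lemmas~\ref{lem:y}, \ref{lem:xz} and \ref{lem:height}.

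First I will count $\card D_1(y)$, using the parametrization already employed in the proof of Lemma~\ref{lem:d(z,z')}: every $z\in\Z^3\cap H_y$ is written uniquely as $z=ay'+ky$ with $(a,k)\in\Z^2$, and primitivity of $z$ in $\Z^3$ is equivalent to $\gcd(a,k)=1$. Writing $y'=u_y+ry$ with $|r|\le\tfrac12$, one has $|z|=|ar+k|\,|y|$ and $\widehat z-\widehat y=\tfrac{a}{ar+k}\cdot\tfrac{u_y}{|y|}$. Setting $n=ar+k$, the defining inequalities of $D_1(y)$ become
\[
\tfrac12|y|^b\le n\le|y|^b\quad\text{and}\quad|a|\le c_1 n,
\]
together with $\gcd(a,k)=1$. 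For each of the $\asymp|y|^b$ admissible values of $n$, there are $\asymp|y|^b$ admissible integers $a$, and a standard sieve leaves a positive proportion of coprime pairs; hence $\card D_1(y)\asymp|y|^{2b}$.

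Next I count $\card E_1(x)$, parametrizing $y$ by $\alpha=\pi_x(y)=mu_1+u_2$ and a choice of lift. The condition $\|mu_1\|_e\le\lambda_2(x)$ allows $\asymp\lambda_2(x)/\lambda_1(x)$ integers $m$, and for each such $m$ one has $\|\alpha\|_e\asymp\lambda_2(x)$ because $(u_1,u_2)$ is a reduced basis; in particular $|y|\asymp(\lambda_2(x)|x|)^{1/(1-\mu)}$ uniformly in $y\in E_1(x)$. For $\alpha$ fixed, $\pi_x^{-1}(\alpha)\cap\Z^3$ is a single $\Z x$-coset, whose heights form an arithmetic progression of common difference $|x|$; the admissible interval $(\|\alpha\|_e|x|)^{1/(1-\mu)}[c_0,2c_0]$ then contains $\asymp|y|/|x|$ lifts, of which a positive proportion are primitive. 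This yields $\card E_1(x)\asymp(\lambda_2(x)/\lambda_1(x))\cdot(|y|/|x|)$.

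The disjointness of the sets $D_1(y)$, $y\in E_1(x)$, follows from Lemma~\ref{lem:distortedtiling} and the choice of $c_1$: for $z\in D_1(y)$ the projection $\widehat z$ lies within $c_1\lambda_1(y)/|y|$ of $\widehat y$, while distinct $\widehat y$ in $\mathcal E(x)$ are separated by $\rho(x)\asymp\lambda_1(y)/|y|$, and $c_1$ has been chosen precisely so that $4c_1\lambda_1(y)/|y|<\rho(x)$. Hence $\card\sigma(x)\asymp\card E_1(x)\cdot|y|^{2b}$. Inserting the power-of-$|x|$ expressions from Lemmas~\ref{lem:y}, \ref{lem:xz} and \ref{lem:height} gives $\card D_1(y)\asymp|x|^{2be_y}=|x|^{d_1}$ and $\card E_1(x)\asymp|x|^{2(\mu-1)/(1+b)+e_y}=|x|^{e_1}$. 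The main obstacle is not a single geometric estimate but the careful bookkeeping between three coordinate systems (the reduced basis $(u_1,u_2)$ of $\Lambda_x$, the basis $(y,y')$ of $\Z^3\cap H_y$, and the coset decomposition of $\pi_x^{-1}(\alpha)$); once this is in place, the elementary algebraic identity $e_1+d_1=n_x$ completes the proof.
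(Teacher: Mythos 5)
Your proof is correct and follows essentially the same decomposition as the paper's: parametrize $D_1(y)$ by coprime pairs $(a,k)$ via the basis $(y',y)$ of $\Z^3\cap H_y$, count $\asymp|y|^{2b}$ such pairs, parametrize $E_1(x)$ by the choice of $m$ (giving $\asymp\lambda_2(x)/\lambda_1(x)$ values of $\alpha$) and the lift over $\alpha$ (giving $\asymp|y|/|x|$ choices), and multiply; the paper invokes Lemma~7.11 of \cite{cheche} where you appeal to a sieve, which amounts to the same thing. Two small remarks: (a) you can spare the worry about ``a positive proportion'' of lifts in $E_1(x)$ being primitive, since $\alpha=\pi_x(y)$ has $u_2$-coefficient $1$ and is therefore primitive in $\Lambda_x$, which already forces \emph{every} lift $y$ to be primitive in $\Z^3$; (b) your explicit disjointness argument for the sets $D_1(y)$ via $\rho(x)$ and the choice of $c_1$ is a welcome addition, as the paper's proof multiplies $\card D_1(y)\cdot\card E_1(x)$ without stating it.
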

\begin{proof}
It is not difficult to see that the number of points $z$ in $H_{y}\cap\Z^{3}$
such that 
$$
\left\Vert \widehat{y}-\widehat{z}\right\Vert_e \leq c_1 
\frac{\lambda_{1}(y)}{\left\vert y\right\vert }
\quad \hbox{and} \quad  
\frac{1}{2}\left\vert
y\right\vert ^{b}\leq\frac{\left\vert z\right\vert }{\left\vert y\right\vert
}\leq\left\vert y\right\vert ^{b}
$$
is $\asymp\left\vert y\right\vert ^{2b}$.
Indeed, the condition $\left\Vert \widehat{y}-\widehat{z}\right\Vert_e \leq
c_1 \frac{\lambda_{1}(y)}{\left\vert y\right\vert }$ is equivalent to
$\left\Vert \pi_{y}(z)\right\Vert_e \leq c_1\frac{\left\vert z\right\vert
}{\left\vert y\right\vert }\lambda_{1}(y)$ which means that there are
$\asymp\frac{\left\vert z\right\vert }{\left\vert y\right\vert }\asymp \left\vert y\right\vert^b $ 
possible values
for $\alpha=\pi_{y}(z)$. Moreover, the set of integer points on each of the lines
$\pi_{y}^{-1}(\alpha)$ is a translate of $\Z y$ and therefore 
there are $\left\vert y\right\vert^b$ possible $z$ for each $\alpha$. The fact that many of
such $z$ are primitive is less clear; in fact, by Lemma 7.11 of \cite{cheche}, we
have
\[
\card D_{1}(y)\asymp\left\vert y\right\vert ^{2b}\asymp
\left\vert x\right\vert ^{2be_{y}}.
\]
Clearly, 
\[
\card E_{1}(x)\asymp\frac{\lambda_{2}(x)}{\lambda_{1}(x)}%
\times\frac{\left\vert y\right\vert }{\left\vert x\right\vert }\asymp
\lambda_{2}^{2}(x)\left\vert x\right\vert^{e_y} \asymp\left\vert x\right\vert
^{2\frac{\mu-1}{1+b}+e_{y}}.
\]
It then follows that the number of points in $\sigma(x)$ satisfies
\[
\card \sigma(x)\asymp \left\vert x\right\vert ^{2be_{y}}\times   
\left\vert x\right\vert
^{2\frac{\mu-1}{1+b}+e_{y}}=\left\vert x\right\vert ^{n_{x}},
\]
where$\allowbreak$
\begin{align*}
n_{x}  &  =2\frac{\mu-1}{1+b}+(1+2b)\frac{\mu+b}{(1-\mu)(1+b)}\\
&  =\frac{1}{\left(  1-\mu\right)  \left(  b+1\right)  }\left(  2b^{2}%
+2b\mu+b+\left(  2\mu-1\right)  \left(  2-\mu\right)  \right).
\end{align*}
\end{proof}

\section{Lower bounds for the Hausdorff dimension: proofs}

\begin{proof}[Proof of Theorem \ref{thm:lower-general}]

Let $s$ be a positive real number. Suppose that the following conditions hold: 

\begin{itemize}
\item $\sum_{z\in\sigma(x)}(\diam B(z))^{s}\geq
(\diam B(x))^{s}$,

\item $R_{1}^{s}\card E_{1}(x)\gg(\diam B(x))^{s}$
where $R_{1}=\max_{y\in E_1(x)} \frac{\lambda_{1}(y)}{\left\vert y\right\vert }\asymp 
\left\vert x\right\vert ^{-e_{y}(\mu+1)}\asymp \left\vert x\right\vert ^{r_1}$,
\end{itemize}

\noindent for all $x$ in $Q_{\sigma}$ with $\left\vert x\right\vert $ large enough. 

Let us show that such an $s$ is a lower bound for $\dim_H\SSing^{\ast}(\mu)$. 
We want to use Theorem \ref{thm:massdistribution} with 
the self-similar structure $(Q_{\sigma},\sigma,B) $ which is strictly nested
and covers a subset of $\SSing^{\ast}(\mu)$ by Proposition
\ref{prop:SSS}. The first condition above is just the first
hypothesis of Theorem \ref{thm:massdistribution} and the second
condition of Theorem \ref{thm:massdistribution} is implied by Lemma
\ref{lem:d(z,z')}.   So it remains to check the last hypothesis of Theorem
\ref{thm:massdistribution}. For this last condition, we use
Lemma \ref{lem:counting} with $R_{0} =c_{2}\left\vert x\right\vert ^{r_{0}}$ 
and the sets $\mathcal{B}=B(\widehat{x},R_{0})$, $\mathcal{E}=\widehat
{E}_{1}(x)$, $\mathcal{D}_{\widehat{y}}=\widehat{D}_{1}(y)$, 
$\mathcal S=\cup_{\widehat{y}\in\mathcal{E}}\mathcal{D}_{\widehat{y}}$, $R_{1}$ defined
above, and
\[
R_{2}=c_{3}\left\vert x\right\vert ^{-e_{y}(\mu+1+2b)}\asymp\left\vert x\right\vert ^{r_2},\ R_{3}
=\left\vert
x\right\vert ^{e_{z}r_{0}}=\left\vert x\right\vert ^{\frac{\mu+b}{(1-\mu
)}r_{0}}= \left\vert x\right\vert ^{r_3}.
\]
Let us first check the inequalities between 
$R_0, \ldots ,R_3, H \asymp\left\vert x\right\vert ^{h}, V \asymp\left\vert x\right\vert ^{v}$. 
Looking at the exponents we find
\begin{align*}
r_0&=-\frac{  \mu^{2}%
-\mu+b+1}{\left(  1-\mu\right)  \left(  b+1\right)  } >
h=-\frac{(2-\mu)(b+\mu)}{(1-\mu)(b+1)}>v=r_1=-\frac{(1+\mu)(b+\mu)}{(1-\mu)(b+1)}>\\
r_2&=-\frac{(\mu+1+2b)(b+\mu)}{(1-\mu)(b+1)}
>r_3=-\frac{(b+\mu)}{(1-\mu)}\times \frac{  \mu^{2}%
-\mu+b+1}{\left(  1-\mu\right)  \left(  b+1\right)  },
\end{align*}
which show that the assumptions of Lemma \ref{lem:counting} about the numbers $R_0,\,R_1,\,R_2,\,R_3,\,H$ and $V$ are satisfied.
Moreover, by Lemma \ref{lem:d(z,z')} and for
$c_{3}$ small enough, we have
\[
d(z,z^{\prime})\geq \frac{\lambda_{1}(y)}{2\left\vert y\right\vert
^{1+2b}}\geq R_{2}, 
\]
for all $z\neq z^{\prime}$ in $D_{1}(y)$. Together with Lemma
 \ref{lem:distortedtiling}, this imply that all the assumptions of Lemma \ref{lem:counting} hold.

With the notations of Lemma \ref{lem:counting}, we get 
\[
f(r)\ll\max \Bigl\{ \frac{1}{R_{3}^{s}},\frac{R_{1}}{R_{2}%
R_{1}^{s}},\frac{R_{0}^{2}R_{1}}{VHR_{2}}\times\frac{1}{R_{0}^{s}} \Bigr\}.
\]
By Lemma \ref{lem:distortedtiling} (or \ref{lem:card:sigma}), $\frac{R_{0}^{2}}{VH}\asymp
\card E_{1}(x)$ and, since $\frac{R_{1}}{R_{2}}\asymp\left\vert
y\right\vert ^{2b}\asymp\card D_{1}(y)$, we see that
\[
\frac{R_{0}^{2}R_{1}}{VHR_{2}}\times\frac{1}{R_{0}^{s}}\asymp\frac
{\card \sigma(x)}{(\diam B(x))^{s}}. %
\]
With the first assumption $\sum_{z\in\sigma(x)}(\diam %
B(z))^{s}\geq(\diam B(x))^{s}$ we get
\[
\frac{1}{R_{3}^{s}}\ll\frac{R_{0}^{2}R_{1}}{VHR_{2}}\times\frac{1}{R_{0}^{s}}.
\]
With the second assumption $R_{1}^{s}\card E_{1}(x)\gg
(\diam B(x))^{s}$, we get%
\[
\frac{R_{1}}{R_{2}R_{1}^{s}}\asymp\frac{R_{0}^{2}}{VH}\frac{1}{R_{1}%
^{s}\card E_{1}(x)}\times\frac{R_{1}}{R_{2}}\ll\frac{R_{0}^{2}%
}{VH}\frac{1}{R_{0}^{s}}\times\frac{R_{1}}{R_{2}}.
\]
Therefore, for all $r$ in $\lbrack R_{3},R_{0} \rbrack$, we have
\[
\frac{\card \sigma(x)\cap B(a,r)}{r^{s}}\ll\frac
{\card \sigma(x)}{(\diam B(x))^{s}}%
\]
and so, with $F=B(a,r)$,%
\[
\sum_{z\in\sigma_{F}(x)}\frac{(\diam B(z))^{s}}%
{(\diam F)^{s}}\ll\sum_{z\in\sigma(x)}\frac
{(\diam B(z))^{s}}{(\diam B(x))^{s}}.
\]
By applying Theorem \ref{thm:massdistribution} we conclude that the Hausdorff
dimension of $\SSing^{\ast}(\mu)$ is at least equal to $s$.

The idea is now to show that the assumption $\sum_{z\in\sigma(x)}%
(\diam B(z))^{s}\geq(\diam B(x))^{s}$ is more restrictive than the
other assumption $R_{1}^{s}\card E_{1}(x)\gg (\diam B(x))^{s}$.

The condition
\[
sr_{3}+n_x>sr_{0}%
\]
implies the first assumption and the condition
\[
sr_{1}+e_{1}>sr_{0} 
\]
implies the second assumption. 
The first condition is equivalent to $s<\frac{n_x}{r_{0}-r_{3}}%
=s_{1}$ and the second is equivalent to $s<\frac{e_{1}}{r_{0}-r_{1}}=s_{2}$.
Therefore, to prove that $s_{1}$ is a lower bound for the Hausdorff dimension of
$\SSing^{\ast}(\mu)$, it is enough to check that $s_{1}<s_{2}$ for all
$\mu$ in $(\frac{1}{2},1)$ and all positive $b$.

Tedious calculations give%
\begin{align*}
s_{1} (b) &  =\frac{(1-\mu)\left(  2b^{2}+2b\mu+b+(2-\mu)(2\mu-1\right)
)}{\left(  b+2\mu-1\right)  \left(  \mu^{2}-\mu+b+1\right)  }, \\
s_{2} (b) &  =\frac{1}{2\mu+b\mu-1}\left(  -2\mu^{2}+5\mu+b-2\right),
\end{align*}
and%
\[
s_{2} (b) -s_{1} (b) =(b+\mu)\frac{\left(  2\mu^{2}-2\mu+1\right)  b^{2}+\left(
4\mu^{2}-2\mu\right)  b+\mu\left(  2-\mu\right)  \left(  2\mu-1\right)  ^{2}%
}{\left(  b-\mu+\mu^{2}+1\right)  \left(  b+2\mu-1\right)  \left(  2\mu
+b\mu-1\right),  }%
\]
which is $>0$ for $\mu$ in $[\frac{1}{2},1)$ and $b>0$. It follows that
\[
\dim_{H}\SSing^{\ast}(\mu)\geq s_{1} (b),%
\]
and the proof is complete. 
\end{proof}

\begin{proof}[Proof of corollaries \ref{cor:baker} and \ref{cor:4/3}]

Clearly,
\[
\lim_{b\rightarrow\infty}s_{1} (b) =2(1-\mu)
\]
It follows that
\[
\dim_{H}\SSing^{\ast}(\mu)\geq\lim_{b\rightarrow\infty}%
s_{1} (b) =2(1-\mu).
\]

Next we can compute the derivative of the function $b \mapsto s_1 (b)$. 
The numerator of this derivative is 
\[
Num(b)=(1-\mu)((2\mu^2-1)b^2+(8\mu^3-8\mu^2+2)b+(6\mu^4-7\mu^3+3\mu-1)).
\]
When $\mu>\frac{1}{\sqrt{2}}$,  $Num(b)$ is positive for all positive $b$, 
hence the maximum of $s_1$ is reached when $b$ goes to infinity. 
When $\mu<\frac{1}{\sqrt{2}}$, $Num(b)$ vanishes at the value 
\[
b_0=\frac{1}{1-2\mu^2}\left(\mu-4\mu^2+4\mu^3+\sqrt{(1-\mu)^3(2\mu-1)(2\mu-2\mu^2+1)}\right), 
\] 
which is positive. Since $Num(b)$ is negative for $b$ large this implies 
that $s_1 (b_0)$ is larger than the limit at infinity 
and therefore the Hausdorff dimension exceeds $2(1-\mu)$.\medskip

Let us look at the limit when $\mu$ tends to $\frac12$. With $b=\beta(2\mu-1)$, we obtain%
\[
s_{1}=s_{1}(\mu,\beta)=\frac{(1-\mu)(2\beta^{2}(2\mu-1)+2\beta\mu+\beta
+2-\mu)}{(\mu^{2}-\mu+1+\beta(2\mu-1))(\beta+1)}%
\]
Therefore for all $\beta>0$,%
\[
\lim_{\mu\rightarrow\frac{1}{2}}\dim_{H}\SSing^{\ast}(\mu)\geq
\frac{\frac{1}{2}(2\beta+\frac{3}{2})}{\frac{3}{4}(\beta+1)}. %
\]
Letting $\beta$ going to infinity this implies%
\[
\lim_{\mu\rightarrow\frac{1}{2}}\dim_{H}\SSing^{\ast}(\mu)\geq\frac
{4}{3}.
\]
\end{proof}

\begin{proof}[Proof of Proposition \ref{prop:packing}]
We keep the notation of the proof of Theorem \ref{thm:lower-general}. 
We want to use Lemma \ref{lem:self:packing} with the strictly 
nested self-similar structure $(Q_\sigma,\sigma,B)$ and the map $x\mapsto \widehat{x}$. 
We need to define the map $B'$. 
For $x$ in $Q_\sigma$, we set
\[
B'(x)=B(\widehat{x},c_{4}\left\vert x\right\vert ^{-\frac{\mu+1+2b}{1+b}}).
\]
Since, for $z\in \sigma(x)$, 
\[
\left\vert z\right\vert ^{-\frac{\mu+1+2b}{1+b}}\asymp \left\vert x \right\vert^{-e_y(1+\mu +2b)}
\asymp  \frac{\lambda_{1}(y)}{2\left\vert y\right\vert
^{1+2b}}\asymp R_2,
\]
making use of Lemma \ref{lem:d(z,z')}, we see that the balls $B'(z)$, $z\in \sigma(x)$, 
are disjoint when $c_4$ is small enough. Moreover, since $R_2$ is small compared to 
$\frac{\lambda_1(y)}{\left\vert y \right\vert}$,  
Lemma \ref{lem:nestedness} implies that for all $x\in Q_{\sigma}$, 
all $y\in E_1(x)$ and $z\in D_1(y)$, we have
\[
B'(z)\subset B(\widehat{y},\frac{2\lambda_1(y)}{\left\vert  y \right\vert})\subset B(x)\subset B'(x),
\]
hence the third assumption of Lemma \ref{lem:self:packing} holds. The second assumption 
of this lemma holds because $R_3$ is small compared to $R_2$. The fifth assumption, namely
\[
\sum_{z\in\sigma(x)}(\diam B'(z))^{s}\geq
(\diam B'(x))^{s},
\]
needs to be checked. Since 
\[
\diam B'(x) \asymp (\diam B(x))^{\frac{\mu+1+2b}{|r_0|(1+b)}}, 
\]
the fifth assumption holds provided that
\[
\frac{\mu+1+2b}{|r_0|(1+b)}s\leq s_1.
\]
Therefore,
\begin{align*}
\dim_P\SSing^{\ast}(\mu)
&\geq \frac{|r_0|(1+b)}{\mu+1+2b}s_1 \\
&=\frac{\left(  2b^{2}+2b\mu+b+(2-\mu)(2\mu-1\right)
	)}{(\mu+1+2b)\left(  b+2\mu-1\right)    }.
\end{align*}
Letting $b$ going to infinity, we obtain
\[
\dim_P\SSing^{\ast}(\mu)\geq 1.
\]
\end{proof}

\end{document}